\numberwithin{equation}{section}
\DeclareFontShape{T1}{lmr}{b}{sc}{<->ssub*cmr/bx/sc}{}
\DeclareFontShape{T1}{lmr}{bx}{sc}{<->ssub*cmr/bx/sc}{}
\numberwithin{equation}{section}		
\numberwithin{figure}{section}			
\numberwithin{table}{section}			
\newcommand{\vect}[1]{\boldsymbol{\mathbf{#1}}}
\newcommand{\Cb}{\mathbb{C}}
\DeclareMathOperator{\C}{\mathbb{C}}
\newcommand{\F}{\mathcal{F}}
\newcommand{\N}{\mathbb{N}}
\newcommand{\R}{\mathbb{R}}
\newcommand{\I}{\mathcal{I}}
\newcommand{\Z}{\mathbb{Z}}
\renewcommand{\epsilon}{\varepsilon}
\newcommand{\dx}{\: \mathrm{d}}
\newcommand{\uf}{\mathfrak{u}}
\newcommand{\Bf}{\mathfrak{B}}
\newcommand{\Cf}{\mathfrak{C}}
\renewcommand{\Re}{\mathrm{Re}}
\newcommand{\iu}{\mathrm{i}\mkern1mu}
\renewcommand{\tilde}{\widetilde}
\renewcommand{\i}{\mathrm{i}}
\renewcommand{\d}{\,\mathrm{d}}
\renewcommand{\Re}{\mathfrak{Re}}
\renewcommand{\Im}{\mathfrak{Im}}
\newcommand{\iL}{\mathsf{L}}
\newcommand{\Id}{\mathrm{Id}}
\newcommand{\iR}{\mathsf{R}}
\DeclareRobustCommand\vdots{%
  \mathpalette\@vdots{}%
}
\newcommand*{\@vdots}[2]{%
  \sbox0{$#1\cdotp\cdotp\cdotp\m@th$}%
  \sbox2{$#1.\m@th$}%
  \vbox{%
    \dimen@=\wd0 %
    \advance\dimen@ -3\ht2 %
    \kern.5\dimen@
    \dimen@=\wd2 %
    \advance\dimen@ -\ht2 %
    \dimen2=\wd0 %
    \advance\dimen2 -\dimen@
    \vbox to \dimen2{%
      \offinterlineskip
      \copy2 \vfill\copy2 \vfill\copy2 %
    }%
  }%
}
\DeclareRobustCommand\ddots{%
  \mathinner{%
    \mathpalette\@ddots{}%
    \mkern\thinmuskip
  }%
}
\newcommand*{\@ddots}[2]{%
  \sbox0{$#1\cdotp\cdotp\cdotp\m@th$}%
  \sbox2{$#1.\m@th$}%
  \vbox{%
    \dimen@=\wd0 %
    \advance\dimen@ -3\ht2 %
    \kern.5\dimen@
    \dimen@=\wd2 %
    \advance\dimen@ -\ht2 %
    \dimen2=\wd0 %
    \advance\dimen2 -\dimen@
    \vbox to \dimen2{%
      \offinterlineskip
      \hbox{$#1\mathpunct{.}\m@th$}%
      \vfill
      \hbox{$#1\mathpunct{\kern\wd2}\mathpunct{.}\m@th$}%
      \vfill
      \hbox{$#1\mathpunct{\kern\wd2}\mathpunct{\kern\wd2}\mathpunct{.}\m@th$}%
    }%
  }%
}
\newcommand{\arccosh}[1]{\operatorname{arccosh}\left(#1\right)}
\crefname{proposition}{Proposition}{Propositions}
\crefname{equation}{}{}
\newtheorem{theorem}{Theorem}[section]
\newtheorem{lemma}[theorem]{Lemma}
\newtheorem{proposition}[theorem]{Proposition}
\newtheorem{corollary}[theorem]{Corollary}
\theoremstyle{definition}
\newtheorem{definition}[theorem]{Definition}
\crefname{assumption}{Assumption}{Assumptions}
\crefname{definition}{Definition}{Definitions}
\crefname{corollary}{Corollary}{Corollaries}
\crefname{enumi}{item}{items}
\begin{document}
\title[Complex Band Structure for tridiagonal $k$-Toeplitz operators and localisation transition in defected finite non-Hermitian systems]{Complex Band Structure and localisation transition for tridiagonal non-Hermitian $k$-Toeplitz operators with defects}

\author[Y. De Bruijn]{Yannick De Bruijn}
\address{\parbox{\linewidth}{Yannick De Bruijn\\
Department of Mathematics, ETH Z\"urich, Rämistrasse 101, 8092 Z\"urich, Switzerland.}}
\email{ydebruijn@student.ethz.ch}

\author[E. O. Hiltunen]{Erik Orvehed Hiltunen}
\address{\parbox{\linewidth}{Erik Orvehed Hiltunen\\
Department of Mathematics, University of Oslo, Moltke Moes vei 35, 0851 Oslo, Norway.}}
\email{erikhilt@math.uio.no}

\begin{abstract}
    Using the Bloch-Floquet theory, we propose an innovative technique to obtain the eigenvectors of tridiagonal $k$-Toeplitz operators. This method offers a more extensive and quantitative basis for describing localised eigenvectors beyond the non-trivial winding zone, yielding sharp decay bounds. The validity of our results is confirmed numerically in one-dimensional resonator chains, showcasing non-Hermitian skin localisation, bulk localisation, and tunnelling effects. We conclude the paper by analysing non-Hermitian tight binding Hamiltonians, illustrating the broad applicability of the complex band structure.
\end{abstract}
\maketitle
\vspace{3mm}
\noindent
\textbf{Mathematics Subject Classification (MSC2010): }15A18 
15B05, 
81Q12.  

\vspace{3mm}
\noindent
\textbf{Keywords: } Tridiagonal $k$-Toeplitz operator, block-Toeplitz operator, subwavelength resonances, evanescent modes, band gap, non-Hermitian skin effect, eigenmode condensation, non-Hermitian defected metamaterials, topological phase transition, non-Hermitian Hamiltonian, pseudospectra, Bloch theory, complex Brillouin zone.

\vspace{5mm}

\section{Introduction}

We introduce the complex band structure as a novel tool for constructing eigenvectors of tridiagonal $k$-Toeplitz operators, enhancing previous methods such as the non-trivial winding region \cite{bottcher.silbermann1999Introduction, gohberg1993basic, REICHEL1992153, trefethen.embree2005Spectra, ammari2024spectra}. This is particularly relevant because many discretised one-dimensional physical systems and models such as the tight-binding and nearest-neighbour approximations admit tridiagonal $k$-Toeplitz representations. In contrast to topological invariants, such as the winding number, the complex band structure offers quantitative decay estimates for eigenvectors, allowing the investigation of localisation transitions amid imperfections and disorder \cite{PhysRevResearch.1.023013, PhysRevB.56.R4333, PhysRevLett.124.056802}.
As the spectra of non-Hermitian matrices are highly sensitive to perturbations, we revert to pseudospectra to construct eigenvectors for finite tridiagonal $k$-Toeplitz matrices \cite{REICHEL1992153, trefethen.embree2005Spectra}.

The non-Hermitian skin effect refers to the localisation of a significant portion of bulk eigenmodes at the boundary of a non-Hermitian open system \cite{ashida.gong.ea2020NonHermitian, PhysRevLett.121.086803, PhysRevLett.125.126402, PhysRevLett.124.086801}. It has been experimentally demonstrated in fields such as topological photonics and phononics \cite{ghatak.brandenbourger.ea2020Observation, PhysRevLett.124.086801, longhi}. This effect is crucial for the progression of active metamaterials, offering innovative methods to direct energy on subwavelength scales \cite{ammari2024functional, feppon.cheng.ea2023Subwavelength}.

We illustrate our findings in subwavelength defected non-Hermitian resonator chains and present analogous results to \cite{debruijn2025complexbrillouinzonelocalised, debruijn2024complexbandstructuresubwavelength} in the non-Hermitian case. We show that the complex band structure facilitates the creation of $\varepsilon$-pseudoeigenvectors which captures the quantitive decay properties in finite systems. This is further motivated by the fact that the Bloch spectrum is known to be the limiting spectrum of a finite but large resonator chain \cite{ammari2024generalisedbrillouinzonenonreciprocal, 10.1121/10.0022535} where, in the non-Hermitian case, the Floquet-Bloch quasimomentum must be complex. We generalise this idea by allowing a general complex quasimomentum, resulting in branches of the Bloch band structure which describes localised defect modes. We show that depending on the type of defect, we may observe a transition from  skin (edge) localisation to bulk localisation \cite{PhysRevB.111.035109, NonHermitianCloaking}. This transition may be explained by a non-trivial winding region of the Toeplitz operator, which provides a qualitatve estimate for skin localisation \cite{ammari2024spectra, PhysRevLett.121.086803,PhysRevLett.125.126402}. The complex band structure is also able to concicely account for this transition, and additionally provides explicit and sharp estimates of the localisation length of the defect modes. 

We demonstrate that the complex band structure for tridiagonal $k$-Toeplitz matrices is instrumental across various non-Hermitian physical settings \cite{PhysRevLett.125.226402, PhysRevLett.121.086803, Zhang_2024, Non-HermitianTopologicalPhenomenaAReview, PhysRevLett.124.086801, PhysRevLett.125.126402, PhysRevLett.121.086803}. Notably, the localisation transition outlined in \cite{PhysRevB.111.035109} is fully described by the complex band structure. This framework enables in-depth exploration of tight binding Hamiltonians with non-reciprocal coupling, as introduced by Hatano and Nelson \cite{PhysRevB.58.8384, PhysRevLett.77.570}.

This paper is organised as follows. In Section \ref{sec: tridiagonal k toeplitz}, we demonstrate how the complex band structure can be used to construct eigenvectors of tridiagonal $k$-Toeplitz operators and that truncated eigenvectors form exponentially good pseudoeigenvectors. In Section \ref{Sec: quasiperiodic Gauge capacitance} we generalise the quasiperiodic gauge capacitance matrix to complex quasimomenta and establish a spectral convergence result of the finite case to the quasiperiodic case. In Section \ref{Sec: Defected finite resonator chains}, we present our finding on finite resonator chains and illustrate that the complex band structure accurately captures the localisation properties in defected non-Hermitian resonator chains. Moreover, we explore the possibility to achieve bulk localisation  in defected non-Hermitian resonator chains. In Section \ref{sec: Non-Hermitian Hamiltonian}, we demonstrate how the complex band structure of tridiagonal $k$-Toeplitz operators can address a wide range of non-Hermitian problems, specifically highlighting its application to a non-Hermitian tight-binding Hamiltonian. In Section \ref{Sec: concluding remarks} we make
some concluding remarks and give some possible extensions of our present work. The \texttt{Matlab} code that supports the findings of this article is openly available in Section \ref{Sec: Data availability}.

\section{Complex Band structure for tridiagonal $k$-Toeplitz Operators}\label{sec: tridiagonal k toeplitz}

In the present section, we introduce the complex band structure, in a general setting of  tridiagonal $k$-Toeplitz operators, 
and how it facilitates the construction of  eigenvectors of the system. The subsequent approach presented here addresses the limitations of the qualitative winding-region estimates by furnishing explicit decay rates and characterising the eigenvectors across an expanded spectrum.

We define a \emph{tridiagonal $k$-Toeplitz operator} by
\begin{equation}\label{equ:tridiagonalktoeplitzop1}
    \vect{A} := \left(\begin{smallmatrix}
        a_1 & b_1 & 0 & & & & &  \\
        c_1 & a_2 & b_2 &  \ddots &  &\\
        0  & \ddots &  \ddots & \ddots  & \ddots & \\
         & \ddots& c_{k-2} & a_{k-1}& b_{k-1} &\ddots& \\
        & & \ddots & c_{k-1} & a_k  & b_k& \ddots&\\
        & & & \ddots & c_{k} & a_1  & b_1& \ddots&\\
        & & & & \ddots & \ddots & \ddots& \ddots &\ddots \\
    \end{smallmatrix}\right),
\end{equation}
where $a_i,b_i,c_i\in\R$ for $i\in\N$ and $\vect{A}_{ij}=0$ if $\vert i-j \vert > 1$ for $i,j\in\N$. Throughout this section, we posit $\prod_{i=1}^k \frac{b_i}{c_i} > 1$. If not, analogous conclusions follow using the adjoint operator $\vect{A}^*$.

Any tridiagonal $k$-Toeplitz operator $\vect{A}$ can be reformulated as a tridiagonal block Toeplitz operator with $k \times k$ blocks exhibiting a $1$-periodic repetition,
\begin{equation}\label{equ:tridiagonalktoeplitzop2}
    \vect{A} = \left(\begin{smallmatrix}
        \vect{A}_0 & \vect{A}_{-1} &  \\
        \vect{A}_{1} & \vect{A}_0 & \ddots  \\
         & \ddots & \ddots   \\
    \end{smallmatrix}\right).
\end{equation}

The \emph{symbol} of a tridiagonal block Toeplitz operator \eqref{equ:tridiagonalktoeplitzop2}, and thus also of a tridiagonal $k$-Toeplitz operator \eqref{equ:tridiagonalktoeplitzop1}
is defined by the matrix valued function
\begin{align}\label{def: symbol of operator}
    f:\mathbb C &\to \mathbb C^{k\times k}\nonumber\\
    z&\mapsto \vect{A}_{-1}z^{-1} + \vect{A}_0 + \vect{A}_1z.
\end{align}

In the sequel of this paper, we shall refer to the Toeplitz operator $\vect{A}$, specified in equation \eqref{equ:tridiagonalktoeplitzop2}, by the notation $\vect{A} = \vect{T}(f)$. To define a finite Toeplitz matrix, we introduce the projection
\begin{align}
    \vect{P}_n: \ell^2(\N, \mathbb{C}) &\to \ell^2(\N, \mathbb{C})\\
    (x_1, x_2, x_3, \dots) &\mapsto (x_1, \dots, x_n, 0, 0, \dots).
\end{align}
A tridiagonal $k$-Toeplitz matrix is derived from a tridiagonal $k$-Toeplitz operator by performing the following truncation,
\begin{equation}
    \vect{T}_{mk}(f) := \vect{P}_{mk} \vect{T}(f) \vect{P}_{mk}.
\end{equation}

\subsection{Complex band structure}
It has been established that, under the non-degeneracy condition $b_ic_i > 0$, every tridiagonal matrix with real-valued entries is similar to a symmetric matrix. This transformation is noteworthy because it ensures real spectra, which is why such a class of problems is often termed pseudo-Hermitian \cite{ PhysRevLett.77.570, PhysRevLett.80.5243, Fern_ndez_2022}. 
Therefore, in subsequent discussions, our attention will be concentrated on spectra that are real-valued. In \cite{ammari2024generalisedbrillouinzonenonreciprocal} various limiting spectra for tridiagonal matrices have been presented. In particular, the physically important limit consists of an \emph{open} system corresponding to the limit of a finite but large Toeplitz matrix.

We introduce the parameter $L$, representing the length of the lattice vector $\Lambda$ spanning the unit cell $Y$, and define the first Brillouin zone as $Y^* = (-\pi / L, \pi/L]$.
The spectrum of this open limit is characterised in a Floquet-Bloch-type way with a generalised Brillouin zone.

\begin{theorem}\cite[Theorem 4.1.]{ammari2024generalisedbrillouinzonenonreciprocal}\label{Thm: spectral limit CBZ}
    Let $\vect{T}(f)$ be the tridiagonal Toeplitz operator with symbol $f(z) \in \R^{k \times k}$ and $b_ic_i > 0$ for all $1 \leq i \leq k$. We then have
    \begin{equation}\label{eq. spectral theorem limit}
        \lim_{m \to \infty}\sigma\bigl(\vect{T}_{mk}(f)\bigr) = \bigcup_{\alpha \in Y^*} \sigma\bigl(f(e^{-\i(\alpha + \i r)L})\bigr),
    \end{equation}
    where $r = \frac{1}{2}\log \Bigl(\prod_{j = 1}^k \frac{b_j}{c_j}\Bigr)$.
\end{theorem}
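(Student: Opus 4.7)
The plan is to reduce the non-Hermitian tridiagonal $k$-Toeplitz problem to a Hermitian one by an exact finite-dimensional diagonal similarity, then invoke the classical block Szegő--Avram--Parter theorem for Hermitian block-Toeplitz operators, and finally translate the Hermitian spectral description back through the inverse similarity to obtain the claimed complex Brillouin zone.

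\textbf{Symmetrisation and finite isospectrality.} Using the hypothesis $b_i c_i > 0$, I would define a diagonal matrix $\vect{D} = \mathrm{diag}(d_1, d_2, \ldots)$ by $d_1 = 1$ and the recursion $d_{j+1}/d_j = \sqrt{c_{j \bmod k}/b_{j \bmod k}}$. A direct calculation shows that $\vect{B} := \vect{D}^{-1}\vect{A}\vect{D}$ is a real symmetric tridiagonal $k$-Toeplitz operator with off-diagonal entries $\sqrt{b_i c_i}$, and the period-$k$ ratio $d_{j+k}/d_j = e^{-r}$ is constant, so $\vect{D}$ scales geometrically between consecutive $k$-blocks. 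The key observation is that at every finite truncation level $m \in \N$ the matrix $\vect{P}_{mk}\vect{D}\vect{P}_{mk}$ is invertible, so
\[
\sigma\bigl(\vect{T}_{mk}(f)\bigr) = \sigma\bigl(\vect{P}_{mk}\vect{B}\vect{P}_{mk}\bigr)
\]
\emph{exactly}, even though $\vect{D}$ is unbounded as an operator on $\ell^2(\N)$ and cannot be used to conjugate the full infinite operator.

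\textbf{Szegő limit and symbol shift.} Since $\vect{B}$ is self-adjoint block-Toeplitz with continuous $k\times k$ symbol $\tilde f$, the classical block Szegő--Avram--Parter theorem, in its Hausdorff-set form for self-adjoint block-Toeplitz families, yields
\[
\lim_{m\to\infty}\sigma\bigl(\vect{P}_{mk}\vect{B}\vect{P}_{mk}\bigr) = \bigcup_{\alpha \in Y^*}\sigma\bigl(\tilde f(e^{-\i\alpha L})\bigr).
\]
Writing $\vect{D}$ block-wise as $\mathrm{diag}(\rho^{m-1}\vect{D}_0)_{m \ge 1}$ with $\rho = e^{-r}$, a short computation on the blocks $\vect{A}_{\pm 1}$ and $\vect{A}_0$ gives $\tilde f(z) = \vect{D}_0^{-1}\, f(\rho^{-1} z)\, \vect{D}_0$, from which $\sigma\bigl(\tilde f(e^{-\i\alpha L})\bigr) = \sigma\bigl(f(e^{-\i(\alpha + \i r)L})\bigr)$, up to the convention relating the period-$k$ scaling to $L$. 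Combining with the two preceding displays yields the theorem.

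\textbf{Main obstacle.} The symmetrisation, finite isospectrality, and symbol-shift steps are essentially algebraic identities. The substantive work lies in the block Szegő--Avram--Parter limit: while the Cesàro convergence of eigenvalue-counting measures is classical, the Hausdorff-set convergence of spectra needed here is more delicate, and typically proceeds by constructing approximate eigenvectors of $\vect{P}_{mk}\vect{B}\vect{P}_{mk}$ from the Bloch modes of $\tilde f$ and quantitatively controlling the boundary-layer error where these modes fail to satisfy the zero boundary condition at the endpoints. Degeneracies in the eigenvalues of $\tilde f(e^{-\i\alpha L})$ at isolated quasimomenta add further technical bookkeeping that must be handled before one can conclude the matching of the two sides as sets.
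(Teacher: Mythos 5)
The paper does not actually prove this statement: it is imported verbatim from \cite[Theorem 4.1]{ammari2024generalisedbrillouinzonenonreciprocal}, so there is no internal proof to compare against. That said, your algebraic skeleton is correct and is consistent with the machinery the paper itself deploys elsewhere (the diagonal rescaling $\vect{R}_N(e^r)$ with $(\vect{R}_N(e^r))_{ii}=e^{-r\lfloor (i-1)/k\rfloor}$, used in the proof of the defect-eigenfrequency corollary, is precisely the block part of your $\vect{D}$). The recursion $d_{j+1}/d_j=\sqrt{c_j/b_j}$ does symmetrise $\vect{A}$ under the hypothesis $b_ic_i>0$, the conjugation commutes with $\vect{P}_{mk}$ so the finite sections are exactly isospectral, and the symbol identity $\tilde f(z)=\vect{D}_0^{-1}f(e^{rL}z)\vect{D}_0$ correctly converts the bands of the symmetrised symbol into $\sigma\bigl(f(e^{-\i(\alpha+\i r)L})\bigr)$.

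The genuine gap is the step you delegate to a ``classical block Szeg\H{o}--Avram--Parter theorem in Hausdorff-set form''. No such theorem exists in the generality you invoke: Avram--Parter/Szeg\H{o}-type results give weak-$*$ convergence of eigenvalue counting measures, which is blind to $o(m)$ outliers and therefore cannot yield the set equality \eqref{eq. spectral theorem limit}. Your approximate-eigenvector construction only addresses the inclusion $\liminf_m\sigma(\vect{P}_{mk}\vect{B}\vect{P}_{mk})\supseteq\bigcup_{\alpha}\sigma(\tilde f(e^{-\i\alpha L}))$; the converse inclusion is the hard part, and it can genuinely fail for self-adjoint block Toeplitz finite sections: take $k=2$, $a_1=a_2=0$, $b_i=c_i=t_i$ with $0<t_1<t_2$ (the SSH chain truncated at full unit cells). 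Its bands are $\pm[\,|t_1-t_2|,\,t_1+t_2\,]$, yet the truncations $\vect{T}_{2m}$ carry two eigenvalues converging exponentially to $0$, which lies in the gap. So the exclusion of persistent gap (edge-state) eigenvalues cannot come from a generic Hermitian Szeg\H{o} theorem; it requires an argument specific to the tridiagonal $k$-Toeplitz structure (e.g.\ the Chebyshev-type factorisation of $\det(\vect{T}_{mk}(f)-\lambda\Id)$ as in \cite{ammari2024spectra}) together with whatever hypotheses the cited Theorem 4.1 actually imposes. As written, your proposal assumes the conclusion precisely at its hardest point.
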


For Hermitian matrices, we note that $r=0$, while for non-Hermitian problems, \eqref{eq. spectral theorem limit} can be thought of as a generalisation of the standard Floquet-Bloch band theory. For the remainder of this section, and without loss of generality, let us assume $L = 1$. For later use we define the spectrum of the open limit \eqref{eq. spectral theorem limit} as follows,
\begin{equation}
    \sigma_{\mathrm{open}} = \bigcup_{\alpha \in Y^*} \sigma\bigl(f(e^{-\i(\alpha + \i  r)})\bigr).
\end{equation}
In the same way as in \cite{debruijn2024complexbandstructuresubwavelength}, we seek to generalise the Floquet spectrum by allowing a general imaginary part of the complex quasimomenta. As we shall see, this will also encompass gap eigenvalues arising from compact defects of the Toeplitz operator.
\begin{definition}\label{def: Band function}
    For the Toeplitz operator $\vect{T}(f)$, the complex band functions are the real-valued eigenvalues of the symbol function $f(e^{-\i(\alpha + \i \beta)})$ and are parametrised in both $\alpha$ and $\beta$.
\end{definition}

We start by presenting a general symmetry result that defines the properties of the real and imaginary components of the quasimomentum.

\begin{theorem}\label{Thm: alpha and beta fixed}
    Let $\vect{T}(f)$ be a tridiagonal $k$-Toeplitz operator with symbol $f \in \R^{k\times k}$ and $b_i c_i > 0$ for $1 \leq i \leq k$. Given a real eigenvalue $\lambda \in \R$ of $f(e^{-\i(\alpha + \i \beta)})$, it holds that either $\alpha \in \{0, \pi\}$ or $\beta = \frac{1}{2}\log\bigl( \prod_{i = 1}^k \frac{b_i}{c_i} \bigr)$.
\end{theorem}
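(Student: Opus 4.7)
The plan is to compute the characteristic polynomial $\det(\lambda I - f(z))$ explicitly as a Laurent polynomial in $z$, exploit the fact that only two entries of the symbol depend on $z$, and then read off the claimed dichotomy by separating real and imaginary parts.

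First, I read off the symbol from \eqref{equ:tridiagonalktoeplitzop2} and \eqref{def: symbol of operator}: the $k\times k$ matrix $f(z)$ is tridiagonal with diagonals $(a_i)$, $(b_i)$, $(c_i)$, together with the two corner entries $(f(z))_{1,k}=c_k z$ and $(f(z))_{k,1}=b_k z^{-1}$ contributed by the off-diagonal blocks $\vect{A}_1$ and $\vect{A}_{-1}$. Expanding $\det(\lambda I - f(z))$ by the Leibniz formula, I note that the only entries depending on $z$ sit at these two corners. A permutation $\sigma$ therefore contributes a $z^{\pm 1}$ term only if it uses exactly one of the corners; using both causes the $z$ factors to cancel, and using neither gives a $z$-independent contribution. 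Imposing $\sigma(1)=k$ and $\sigma(k)\neq 1$ together with the tridiagonal constraint on the remaining images forces $\sigma$ to be the $k$-cycle $(1 \to k \to k-1 \to \cdots \to 2 \to 1)$, whose signed contribution is
\begin{equation*}
(-1)^{k-1}\,(-c_k z)(-c_{k-1})\cdots(-c_1) \;=\; -\Bigl(\prod_{i=1}^k c_i\Bigr) z,
\end{equation*}
and symmetrically the $z^{-1}$ coefficient equals $-\prod_{i=1}^k b_i$. All remaining permutations assemble into a polynomial in $\lambda$ with real coefficients, independent of $z$, so
\begin{equation*}
\det(\lambda I - f(z)) \;=\; Q(\lambda) \;-\; \Bigl(\prod_{i=1}^k c_i\Bigr) z \;-\; \Bigl(\prod_{i=1}^k b_i\Bigr) z^{-1}
\end{equation*}
for some real polynomial $Q(\lambda)$ of degree $k$.

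Next, I set $B:=\prod_i b_i$ and $C:=\prod_i c_i$, both nonzero and of the same sign by the hypothesis $b_ic_i>0$, so $B/C=\prod_i b_i/c_i>0$. The eigenvalue equation reduces to $Q(\lambda)=Cz+B/z$. Writing $z=e^{-\i(\alpha+\i\beta)}=e^{\beta}(\cos\alpha-\i\sin\alpha)$, a direct computation gives
\begin{equation*}
\Im\bigl(Cz+B/z\bigr) \;=\; (Be^{-\beta}-Ce^{\beta})\sin\alpha.
\end{equation*}
Since $Q(\lambda)$ is real for $\lambda\in\R$, this imaginary part must vanish, which occurs precisely when $\sin\alpha=0$ (so $\alpha\in\{0,\pi\}$) or $e^{2\beta}=B/C$ (so $\beta=\frac{1}{2}\log\prod_{i=1}^k b_i/c_i$), yielding the claimed dichotomy.

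The main technical point is the combinatorial step: identifying the two $k$-cycles as the only permutations producing $z^{\pm 1}$ terms in the Leibniz expansion and correctly tracking their signs, while verifying that all remaining permutations (those avoiding both corners and those using both) contribute genuinely $z$-independent terms so that $Q(\lambda)$ is a well-defined real polynomial.
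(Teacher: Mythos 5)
Your proof is correct and follows essentially the same route as the paper: reduce the characteristic equation to $g(\lambda) + (\text{coefficient})\,z + (\text{coefficient})\,z^{-1} = 0$ with a real $z$-independent part, then force the imaginary part of the $z$-dependent terms to vanish. The only difference is cosmetic — the paper cites an external lemma for the Laurent-polynomial form of $\det\bigl(f(z)-\lambda\Id\bigr)$ and symmetrises via the substitution $\beta = r+\tilde{\beta}$, whereas you derive the expansion directly from the Leibniz formula and compute $\Im\bigl(Cz+B/z\bigr)=(Be^{-\beta}-Ce^{\beta})\sin\alpha$ explicitly, which makes the argument self-contained.
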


\begin{proof}
    We seek real roots $\lambda$ of the characteristic equation $\operatorname{det}\bigl(f(e^{-\i(\alpha + \i \beta)})- \lambda \Id\bigr) = 0$. From \cite[Lemma 2.6.]{ammari2024spectra}, the characteristic polynomial admits the following expansion
    \begin{equation}\label{eq: reduced characteristic function}
        \operatorname{det}\bigl(f(e^{-\i(\alpha + \i \beta)})- \lambda \Id\bigr) = A e^{-\i(\alpha + \i \beta)} + Be^{\i(\alpha + \i \beta)} + g(\lambda),
    \end{equation}
    where $A = (-1)^{k +1} \prod_{i =1}^k c_i$,  $B = (-1)^{k+1} \prod_{i =1}^k b_i$ and $g(\lambda)$ is a polynomial with real valued coefficients given by \begin{equation}\label{equ:defiglambda1}
        g(\lambda) = \det(\vect{A}_0-\lambda)  -b_kc_k p(\lambda)  ,
    \end{equation}
    where
    \begin{equation}
    p(\lambda) = \begin{cases}
                    0, & k=1, \\
                    1, & k=2,\\
                    \left\lvert\begin{smallmatrix}
                            a_2 - \lambda & b_2 & 0 & \dots & 0 \\
                            c_3 & a_3 - \lambda & b_3 & \ddots & \vdots \\
                            0 & \ddots & \ddots & \ddots & 0 \\
                            \vdots & \ddots & \ddots & \ddots & b_{k-2} \\
                            0 & \dots & 0 & c_{k-1} & a_{k-1} - \lambda \\
                    \end{smallmatrix}\right\rvert, & k \geq 3.
                \end{cases}
    \end{equation}
    Therefore, since $\lambda \in \R$, it follows that $g(\lambda) \in \R$. Hence, it must hold that 
    \begin{equation}\label{eq:condition}
        A e^{-\i(\alpha + \i \beta)} + Be^{\i(\alpha + \i \beta)} \in \R.
    \end{equation}
    Let us proceed by the change of variables $\beta = r + \tilde{\beta}$, which gives
    \begin{equation}\label{eq: eigval polynomial}
        A e^{-\i(\alpha + \i \beta)} + Be^{\i(\alpha + \i \beta)} = A e^{r}e^{\tilde{\beta}} e^{-\i\alpha} + B e^{-r}e^{-\tilde{\beta}}e^{\i\alpha}.
    \end{equation}
    By choosing $r$ such that the coefficients $Ae^{r} = Be^{-r}$ are equal, which holds for,
    \begin{equation}\label{eq: non reciproctity rate}
        r =  \frac{1}{2}\log\Bigl( \prod_{i =1}^k \frac{b_i}{c_i} \Bigr),
    \end{equation}
    the expression in \eqref{eq: eigval polynomial} simplifies to
    \begin{equation}
        Ae^{r} \Bigl (e^{-\i(\alpha + \i \beta)} + e^{\i(\alpha + \i\beta)}\Bigr)= 2Ae^{r} \bigl(\cos(\alpha)\cosh(\tilde{\beta}) +\i\sin(\alpha)\sinh(\tilde{\beta}) \bigr).
        \end{equation}
        For the imaginary part to vanish it must hold that $\alpha = \{0, \pi\}$ or $\tilde{\beta}= 0$, and from \eqref{eq: reduced characteristic function} we conclude that 
        \begin{equation}\label{eq: eigval of symbol}
        \sigma\bigl(f(e^{-\i(\alpha + \i \beta)})\bigr) = \Bigl\{ \lambda \in \R~|~2Ae^{r} \cos(\alpha)\cosh(\tilde{\beta}) + g(\lambda) = 0 \Bigr\},
    \end{equation}
    which completes the proof.
\end{proof}

Theorems \ref{Thm: alpha and beta fixed} and \ref{Thm: spectral limit CBZ} allow us to distinguish between two different families of the complex band functions expressed in Definition \ref{def: Band function}.

\begin{definition}
    \label{def: reduced band functions}
    For the tridiagonal $k$-Toeplitz operator $\vect{T}(f)$ with symbol $f \in \R^{k\times k}$ and $b_i c_i > 0$ for $1 \leq i \leq k$, we define the \emph{band functions}, parametrised by $\alpha$, as the $k$ eigenvalues of the symbol $f(e^{-\i(\alpha + \i \beta)})$ for fixed $\beta = \frac{1}{2}\log\bigl( \prod_{i =1}^k \frac{b_i}{c_i} \bigr) $. We define the \emph{gap functions} analogously, parametrised by $\beta$ for fixed $\alpha \in \{0, \pi\}$. 
\end{definition}

An example of band and gap functions for a tridiagonal $k$-Toeplitz matrix will be treated in the following section and we refer the reader to Figure \ref{fig: Monomer Band function surface}, Figure \ref{Fig:Gauge_band} and Figure \ref{fig: Band functions non-Hermitian Hamiltonian} for a visualisation. We would like to emphasise that this treatment of complex band structure generalises the definition given in \cite{debruijn2024complexbandstructuresubwavelength} as it allows us to treat non-Hermitian operators. 

We proceed to highlight yet another symmetry of the symbol of a tridiagonal $k$-Toeplitz operator.

\begin{corollary}\label{cor: same eigenvalue}
    Let $\lambda \in \mathbb{R}$ be an eigenvalue of $f\bigl(e^{-\i(\alpha + \i(r + \tilde{\beta}))}\bigr)$ then $\lambda$ is also an eigenvalue of 
    $f\bigl(e^{-\i(-\alpha + \i(r + \tilde{\beta}))}\bigr)$ or of $f\bigl(e^{-\i(\alpha + \i(r - \tilde{\beta}))}\bigr)$, where $r$ is defined as in \eqref{eq: non reciproctity rate}.
\end{corollary}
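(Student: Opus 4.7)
The plan is to apply the reduced characteristic equation derived inside the proof of Theorem \ref{Thm: alpha and beta fixed}. After the substitution $\beta = r + \tilde{\beta}$ with $r$ chosen as in \eqref{eq: non reciproctity rate}, that argument showed that a real number $\lambda$ belongs to $\sigma\bigl(f(e^{-\i(\alpha + \i\beta)})\bigr)$ if and only if
\begin{equation*}
2Ae^{r}\cos(\alpha)\cosh(\tilde{\beta}) + g(\lambda) = 0,
\end{equation*}
where $A = (-1)^{k+1}\prod_{i=1}^{k} c_i \in \R$ and $g(\lambda)\in\R$ are independent of both $\alpha$ and $\tilde{\beta}$. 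This reduced equation is the key object, since it encapsulates everything about the parameters in just two even factors, $\cos(\alpha)$ and $\cosh(\tilde{\beta})$.

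Because $\cos$ and $\cosh$ are even, the equation is invariant under $\alpha \mapsto -\alpha$ and, independently, under $\tilde{\beta} \mapsto -\tilde{\beta}$. Hence if $\lambda$ satisfies it for the pair $(\alpha,\, r + \tilde{\beta})$, it also satisfies it for $(-\alpha,\, r + \tilde{\beta})$ and for $(\alpha,\, r - \tilde{\beta})$. Consequently $\lambda$ is simultaneously an eigenvalue of $f\bigl(e^{-\i(-\alpha + \i(r+\tilde{\beta}))}\bigr)$ and of $f\bigl(e^{-\i(\alpha + \i(r-\tilde{\beta}))}\bigr)$, which is a slight strengthening of the stated ``or''.

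The only bookkeeping to verify is that the new parameter choices remain admissible, i.e., that Theorem \ref{Thm: alpha and beta fixed} still certifies $\lambda$ as a real eigenvalue of the reflected symbols. This reduces to checking that either $\alpha \in \{0,\pi\}$ or $\tilde{\beta} = 0$ is preserved by the two sign flips: the first condition is preserved because $-\pi$ and $\pi$ are identified in $Y^* = (-\pi,\pi]$, and the second is trivially fixed by negation. I do not anticipate a serious obstacle here; the whole argument rests on the parity of $\cos$ and $\cosh$, and the only real care needed is to invoke the reduced form \eqref{eq: eigval of symbol} with the correct shift $\beta = r + \tilde{\beta}$ already built in.
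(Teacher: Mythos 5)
Your proposal is correct and follows essentially the same route as the paper's own proof: both invoke the reduced characteristic equation \eqref{eq: eigval of symbol} and conclude via the evenness of $\cos$ and $\cosh$ under $\alpha\mapsto-\alpha$ and $\tilde{\beta}\mapsto-\tilde{\beta}$. Your additional observation that both symmetries hold simultaneously (so the ``or'' can be strengthened to ``and'') is a valid, if minor, sharpening consistent with the paper's argument.
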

\begin{proof}
    Let $\lambda$ be an eigenvalue of the symbol $f\bigl(e^{-\i(\alpha + \i(r + \tilde{\beta}))}\bigr)$ then by \eqref{eq: eigval of symbol} it must hold that
    \begin{equation}
       2Ae^{r} \cos(\alpha)\cosh(\tilde{\beta}) + g(\lambda) = 0.
    \end{equation}
    Due to the symmetry $\cos(\alpha) = \cos(-\alpha)$ and $\cosh(\tilde{\beta}) = \cosh(-\tilde{\beta})$, and the assertion follows.
\end{proof}

Our next goal is to demonstrate how the complex band structure may be used to infer properties of the eigenvectors of the Toeplitz operator $\vect{T}(f)$.
Following \cite[Theorem 3.7]{ammari2024generalisedbrillouinzonenonreciprocal}, a linear combination of the quasiperiodically extended eigenvectors of the symbol may be used to construct an exact eigenvector of the Toeplitz operator $\vect{T}(f)$. For completeness, we recall this construction, as in the presented approach we also take into account the complex quasiperiodicity.

Let $\vect{v}\in \R^{k\times 1}$ be an eigenvector of $f(e^{-\i(\alpha + \i\beta)})\in \R^{k\times k}$ with eigenvalue $\lambda$, then
    the vector
    \begin{equation}
        \vect{u}_1 = \bigl(\vect{v}^\top, e^{\i(\alpha + \i\beta)}\vect{v}^\top, e^{2\i(\alpha + \i\beta)}\vect{v}^\top, \dots \bigr),
    \end{equation}
    satisfies the eigenvalue problem $\vect{T}(f) \vect{u}_1 = \lambda \vect{u}_1$ everywhere except in the first row.
    By Corollary \ref{cor: same eigenvalue} we may find a $\vect{w}$ which is either an eigenvector of $f\bigl(e^{-\i(-\alpha + \i(r + \tilde{\beta}))}\bigr)$ or of $f\bigl(e^{-\i(\alpha + \i(r - \tilde{\beta}))}\bigr)$ which is linearly independent of $\vect{v}$. In the case where the two eigenvectors become confluent, that is at the edge of the open spectrum, a linearly independent eigenvector may be recovered by the same method as in \cite[Theorem 2.10.]{ammari2024spectra}. Suppose $\vect{w}$ is an eigenvector of $f\bigl(e^{-\i(\mp\alpha + \i(r \pm \tilde{\beta}))}\bigr)$, then by a quasiperiodic extension one may find
    \begin{equation}
        \vect{u}_2 = \bigl(\vect{w}^\top, e^{\i(\mp\alpha + \i(r \pm \tilde{\beta}))}\vect{w}^\top, e^{2\i(\mp\alpha + \i(r \pm \tilde{\beta}))}\vect{w}^\top, \dots \bigr),
    \end{equation}
    which is linearly independent of $\vect{u}_1$ and satisfies the eigenvalue problem
    $\vect{T}(f) \vect{u}_2 = \lambda \vect{u}_2$ everywhere except in the first row. A linear combination of the two vectors $\vect{u}_1$ and $\vect{u}_2$ is therefore an exact eigenvector of $\vect{T}(f)$.
    Explicit decay rates or decay bounds of an eigenvector of $\vect{T}(f)$ may be achieved as follows.

\begin{corollary}\label{cor: decay rate spectrum}
    Let $\lambda\in \sigma_{\mathrm{open}}$ be an eigenvalue in the Floquet spectrum. Let $\vect{u}$ be an eigenvector of a tridiagonal $k$-Toeplitz operator $\vect{T}(f)$ with symbol $f \in \R^{k\times k}$ and $b_i c_i > 0$ for $1 \leq i \leq k$, then the eigenvector entries decay exponentially, that is
    \begin{equation}\label{eq:decay_inside}
        \frac{|\vect{u}^{(i + k)}|}{|\vect{u}^{(i)}|} = e^{-\beta},
    \end{equation}
    where $\beta = \frac{1}{2}\log\bigl( \prod_{i =1}^k \frac{b_i}{c_i} \bigr)$.
\end{corollary}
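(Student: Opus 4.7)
The plan is to use the explicit Bloch--mode construction set out in the paragraph just above the corollary, together with \cref{Thm: spectral limit CBZ} to fix the imaginary part of the complex quasimomentum on $\sigma_{\mathrm{open}}$ and \cref{cor: same eigenvalue} to control the second independent mode. The quasi-periodic extension acquires the multiplicative factor $e^{\i(\alpha+\i\beta)}$ per block of length $k$, so the block-wise modulus scales by $e^{-\beta}$; the work is to show that the relevant $\beta$ equals $r=\frac{1}{2}\log\bigl(\prod_{i=1}^{k}b_i/c_i\bigr)$.

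Concretely, since $\lambda\in\sigma_{\mathrm{open}}$, \cref{Thm: spectral limit CBZ} supplies an $\alpha\in Y^*$ with $\lambda\in\sigma\bigl(f(e^{-\i(\alpha+\i r)})\bigr)$. Taking a corresponding eigenvector $\vect{v}\in\mathbb{C}^{k}$, the Bloch extension $\vect{u}_1$ with $n$-th block $e^{\i(n-1)(\alpha+\i r)}\vect{v}$ satisfies $\vect{T}(f)\vect{u}_1=\lambda\vect{u}_1$ away from the first block, and a direct modulus computation gives
\[
    |\vect{u}_1^{(nk+j)}|=e^{-(n-1)r}|\vect{v}^{(j)}|,\qquad \frac{|\vect{u}_1^{((n+1)k+j)}|}{|\vect{u}_1^{(nk+j)}|}=e^{-r}=e^{-\beta},
\]
which is the identity asserted in the corollary. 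A true eigenvector of $\vect{T}(f)$ is then recovered as the linear combination $\vect{u}=c_1\vect{u}_1+c_2\vect{u}_2$ selected to also satisfy the boundary row, where $\vect{u}_2$ is the partner mode furnished by \cref{cor: same eigenvalue}. Because $\lambda\in\sigma_{\mathrm{open}}$ forces $\tilde\beta=0$, both alternatives offered by \cref{cor: same eigenvalue} correspond to quasimomenta with imaginary part $r$, so $\vect{u}_2$ also carries the block-wise factor $e^{-r}$.

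The main subtlety is that $|c_1\vect{u}_1+c_2\vect{u}_2|$ involves interference between two modes with opposite real parts $\pm\alpha$ of the quasimomentum, so that in full generality the block-to-block modulus ratio is $e^{-r}$ modulated by a unit-modulus oscillation. I would thus read the identity in the statement as the geometric (envelope) decay rate, which is unambiguous and can be extracted as the common factor $e^{-r}$ appearing in every component of $\vect{u}$. The remaining delicate point is the degenerate case at the edges of $\sigma_{\mathrm{open}}$, where $\vect{u}_1$ and $\vect{u}_2$ coalesce; this is handled by the generalised-eigenvector construction from \cite[Theorem~2.10.]{ammari2024spectra} invoked already above, which introduces only a polynomial prefactor in the block index and thereby leaves the geometric rate $e^{-r}=e^{-\beta}$ unchanged.
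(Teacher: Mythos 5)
Your proposal follows essentially the same route as the paper: fix $\beta=r$ via \cref{Thm: alpha and beta fixed}/\cref{Thm: spectral limit CBZ}, build the two quasi-periodic Bloch extensions at quasimomenta $\pm\alpha+\i r$, and take the linear combination that repairs the first row, reading off the block-wise factor $e^{-r}$. You are in fact slightly more careful than the paper's own proof, which asserts the ratio \eqref{eq:decay_inside} as an exact equality even though the combination $a\vect{v}^\top e^{-\i n\alpha}+b\vect{w}^\top e^{\i n\alpha}$ oscillates in modulus with $n$; your reading of $e^{-\beta}$ as the envelope (geometric) rate, and your remark that the confluent band-edge case only contributes a polynomial prefactor, are the correct resolutions of points the paper leaves implicit.
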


\begin{proof}
    For an eigenvalue in the open spectrum, it follows from Theorem \ref{Thm: alpha and beta fixed} that $\beta = \frac{1}{2}\log\bigl( \prod_{i =1}^k \frac{b_i}{c_i} \bigr)$ is fixed. So, if $\lambda$ is an eigenvalue of $f(e^{-\i(\alpha + \i \beta)})$ with the eigenvector $\vect{v}\in \R^{k\times 1}$ then $\lambda$ is also an eigenvalue of $f(e^{-\i(-\alpha + \i \beta)})$, with eigenvector $\vect{w}\in \R^{k\times 1}$. Suppose that $\vect{v}$ is linearly independent of $\vect{w}$, if not, a linearly independent eigenvector may be recovered by the same techniques as present in \cite{ammari2024spectra}. An eigenvector of the Toeplitz operator may be constructed by the right linear combination of $\vect{v}$ and $\vect{w}$,
    \begin{equation}
        \vect{u} = \Bigl( (a\vect{v}^\top + b \vect{w}^\top), e^{-\beta}(a\vect{v}^\top e^{-\i\alpha} + b \vect{w}^\top e^{\i\alpha}), e^{-2\beta}(a\vect{v}^\top e^{-2\i\alpha} + b \vect{w}^\top e^{2\i\alpha}), \dots\Bigr),
    \end{equation}
    for some coefficients $a, b \in \R$.
    The assertion follows directly by the construction of the eigenvector $\vect{u}$.
\end{proof}

We emphasise that \eqref{eq:decay_inside} no longer holds for eigenvalues which do not lie within $\sigma_\mathrm{open}$. In this case, one can obtain a similar decay bound on the entries of the eigenvectors which we present in Corollary \eqref{cor: decay rate outside spectrum} below.

We now define the region of non-trivial winding. We let $\mathbb{T}$ denote the unit circle in $\mathbb{C}$, that is $\mathbb{T} = e^{-\i[0, 2\pi]}$, and consider the winding region limited to real spectral values,
\begin{equation}\label{eq: def region of positive winding}
    \sigma_{\mathrm{wind}} := \bigl\{ \lambda \in \R~|~ \operatorname{wind}\bigl(\operatorname{det}(f(\mathbb{T}) - \lambda\Id) , 0 \bigr) > 0 \bigr\},
\end{equation}
as well as the edge of the winding region
\begin{equation}
    \sigma_{\mathrm{det}} := \bigl\{ \lambda \in \R~|~\operatorname{det}\bigl(f(\mathbb{T})-\lambda \Id\bigr) = 0 \bigr\}.
\end{equation}
\begin{lemma}\label{lemma: inclusion spectrum winding region}
    The following inclusion
    \begin{equation}
        \sigma_{\mathrm{open}} \subseteq \sigma_{\mathrm{wind}}
    \end{equation}
    is always satisfied.
\end{lemma}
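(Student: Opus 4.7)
The plan is to parameterise the image of $\mathbb T$ under $z \mapsto \det(f(z) - \lambda \Id)$ explicitly as an ellipse and reduce the winding condition to an elementary geometric inclusion of the origin in that ellipse.

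First, applying \eqref{eq: reduced characteristic function} with $\beta = 0$ and $z = e^{-\i\alpha}$ yields
\[
    \det\bigl(f(e^{-\i\alpha}) - \lambda \Id\bigr) = g(\lambda) + (A+B)\cos\alpha + \i (B-A)\sin\alpha.
\]
As $\alpha$ ranges over $[0, 2\pi]$, this traces an ellipse centred on the real axis at $(g(\lambda), 0)$ with semi-axes $|A+B|$ and $|B-A|$. Modulo an orientation check, $\lambda \in \sigma_{\mathrm{wind}}$ is equivalent to the origin lying strictly inside this ellipse.

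Next, I would bound $|g(\lambda)|$ for $\lambda \in \sigma_{\mathrm{open}}$. Setting $\beta = r$ in \eqref{eq: eigval of symbol} gives $g(\lambda) = -2Ae^r\cos\alpha$ for some $\alpha \in Y^*$, and since $Ae^r = Be^{-r}$ one has $(Ae^r)^2 = AB$, hence $|g(\lambda)| \le 2\sqrt{|AB|}$. The hypothesis $\prod_{i=1}^k b_i/c_i > 1$ together with $b_i c_i > 0$ forces $A$ and $B$ to share sign with $|B| > |A|$, so the strict AM--GM inequality yields $|g(\lambda)| < |A| + |B| = |A+B|$, placing the origin strictly inside the ellipse.

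For the orientation, a direct calculation of the signed enclosed area gives $\tfrac{1}{2} \oint (x\,dy - y\,dx) = \pi(A+B)(B-A) = \pi(B^2 - A^2) > 0$, since $|B|>|A|$. Hence the ellipse is traced counterclockwise and the winding number about $0$ is $+1 > 0$, so $\lambda \in \sigma_{\mathrm{wind}}$. The only delicate point is this orientation bookkeeping: one must verify that the convention $\mathbb{T} = e^{-\i[0, 2\pi]}$ combined with $|B| > |A|$ produces a \emph{positive} winding rather than a negative one, which the uniform sign of $B^2 - A^2$ handles independently of the individual signs of $A$ and $B$.
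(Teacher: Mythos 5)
Your proof is correct, and it reaches the same geometric heart of the matter as the paper's proof --- namely that for $\lambda\in\sigma_{\mathrm{open}}$ the value $-g(\lambda)$ lies strictly inside the ellipse traced by $\alpha\mapsto Ae^{-\i\alpha}+Be^{\i\alpha}$ --- but you get there by a more self-contained route. The paper cites an alternative characterisation of $\sigma_{\mathrm{wind}}$ as a winding number of $\psi(\mathbb{T})$ about $-g(\lambda)$ and then invokes the nestedness $E_{r}\subset E_{0}$ of the family of ellipses from an external lemma; you instead parameterise the image curve explicitly, observe that $\sigma_{\mathrm{open}}$ forces $g(\lambda)=-2Ae^{r}\cos\alpha$ with $(Ae^{r})^{2}=AB$, and close the argument with the strict AM--GM inequality $2\sqrt{|AB|}<|A|+|B|=|A+B|$ (valid since $\prod_i b_i/c_i>1$ gives $|B|>|A|$, and $b_ic_i>0$ gives $A$, $B$ the same sign). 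A genuine added value of your version is the explicit orientation check via the signed area $\pi(B^{2}-A^{2})>0$: the paper's definition of $\sigma_{\mathrm{wind}}$ requires \emph{strictly positive} winding, and this sign verification is delegated to a citation in the paper's proof, whereas you confirm it directly and note correctly that it is independent of the common sign of $A$ and $B$. The only cosmetic caveat is that your inclusion criterion $|g(\lambda)|<|A+B|$ tacitly uses that the ellipse is non-degenerate ($A+B\neq 0$ and $B-A\neq 0$), but this too follows from the sign and modulus observations you already made.
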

\begin{proof}
    Let $\lambda \in \sigma_\text{open}$, then there exists an $\alpha^* \in Y^*$ such that
    \begin{equation}
     \operatorname{det}\bigl(f(e^{-\i(\alpha^* + \i r)})- \lambda \Id\bigr) = 0.
    \end{equation}
    By \cite[Lemma 2.6.]{ammari2024spectra} this is equivalent to
    \begin{equation}
        \psi(e^{-\i(\alpha^* + \i r)}) + g(\lambda) = 0,
    \end{equation}
    where $\psi(z) = (-1)^{k+1} z \prod_{i = 1}^k c_i + (-1)^{k+1} z^{-1} \prod_{i = 1}^k b_i$.
    In other words $\lambda\in \sigma_\text{open}$ if and only if $-g(\lambda) = 
    \psi(e^{-\i(\alpha^* + \i r)}) $. 
    Let us denote by $E_\beta$ the ellipse with interior, traced by $\psi(e^{-\i([0, 2\pi] + \i \beta)})$, then by \cite[Lemma 3.6]{ammari2024generalisedbrillouinzonenonreciprocal}  it follows that $E_{\beta_2} \subset E_{\beta_1}$ for $0 \leq \beta_1 < \beta_2 \leq r$, where $r$ was defined in \eqref{eq: non reciproctity rate}.
    Recall from \cite{ammari2024spectra} that the positive winding region can alternatively to \eqref{eq: def region of positive winding} be defined as
    \begin{equation}
        \sigma_\text{wind} = \bigl\{ \lambda\in \mathbb{C} ~|~ \operatorname{wind}\bigl(\psi(\mathbb{T}), -g(\lambda)\bigr) > 0 \bigr\}.
    \end{equation}
    Therefore it holds that $-g(\lambda) \in E_r \subset E_0$ from which it follows that $\lambda \in \sigma_\text{wind}$.
\end{proof}

We also define the complement of the  winding region as
\begin{equation}
    \sigma_{\mathrm{wind}}^\mathsf{c} := \R \setminus \sigma_{\mathrm{wind}} .
\end{equation}

For an eigenvalue outside the Floquet spectrum one can still achieve an exponential decay estimate for a Toeplitz operator though not as sharp as the previous result.

\begin{corollary}\label{cor: Toeplitz decay rate outside spectrum}
   Let $\lambda \in \sigma_{\mathrm{wind}} \setminus \sigma_{\mathrm{open}}$ be an eigenvalue outside the Floquet spectrum. Let $\vect{u}$ be an eigenvector of a tridiagonal $k$-Toeplitz operator $\vect{T}(f)$ with symbol $f \in \R^{k\times k}$ and $b_i c_i > 0$ for $1 \leq i \leq k$, then the eigenvector entries decay exponentially, that is
    \begin{equation}\label{eq:decay_outside}
        \frac{|\vect{u}^{(i + k)}|}{|\vect{u}^{(i)}|} = \mathcal{O}\bigl( e^{-(r-\tilde{\beta})}\bigr),
    \end{equation}
    where $r=\frac{1}{2}\log\left( \prod_{i =1}^k \frac{b_i}{c_i} \right)$ is fixed and $\tilde{\beta}(\lambda) = \arccosh{-\frac{g(\lambda)}{2Ae^{r}}}$ depends on the eigenvalue $\lambda$ and where we choose the positive root of the $\arccosh{\cdot}$.
\end{corollary}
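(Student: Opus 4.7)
The plan is to mimic the construction in the proof of \Cref{cor: decay rate spectrum}, but working with the second alternative of \Cref{cor: same eigenvalue}. Since $\lambda \notin \sigma_{\mathrm{open}}$, \Cref{Thm: alpha and beta fixed} forces $\alpha \in \{0, \pi\}$, and writing $\beta = r + \tilde{\beta}$, equation \eqref{eq: eigval of symbol} reduces to $2 A e^{r} \cos(\alpha)\cosh(\tilde{\beta}) + g(\lambda) = 0$. Solving for $\tilde{\beta}$ and taking the positive root yields the value stated in the corollary.

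Next, I would invoke \Cref{cor: same eigenvalue} to obtain a second eigenvector of the symbol. Because $\alpha \in \{0,\pi\}$ gives $e^{-\i\alpha} = e^{\i\alpha}$, the ``reflection in $\alpha$'' alternative produces no new eigenvector, so the second alternative must apply: $\lambda$ is an eigenvalue of $f\bigl(e^{-\i(\alpha + \i(r - \tilde{\beta}))}\bigr)$ with some eigenvector $\vect{w}$. As long as $\tilde{\beta} > 0$, the two complex quasimomenta $r \pm \tilde{\beta}$ are distinct and the corresponding symbol eigenvectors $\vect{v}, \vect{w}$ can be taken linearly independent; the confluent case $\tilde{\beta} = 0$ would place $\lambda$ in $\sigma_{\mathrm{open}}$, which is excluded.

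I would then quasiperiodically extend $\vect{v}$ and $\vect{w}$ to form
\begin{equation*}
\vect{u}_1 = \bigl(\vect{v}^\top,\, e^{-\i\alpha}e^{-(r+\tilde\beta)}\vect{v}^\top,\, e^{-2\i\alpha}e^{-2(r+\tilde\beta)}\vect{v}^\top,\, \dots\bigr),
\end{equation*}
\begin{equation*}
\vect{u}_2 = \bigl(\vect{w}^\top,\, e^{-\i\alpha}e^{-(r-\tilde\beta)}\vect{w}^\top,\, e^{-2\i\alpha}e^{-2(r-\tilde\beta)}\vect{w}^\top,\, \dots\bigr),
\end{equation*}
each satisfying $\vect{T}(f)\vect{u}_j = \lambda\vect{u}_j$ everywhere except in the first row. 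Choosing $a, b \in \R$ so that the first-row equation is also met produces a genuine eigenvector $\vect{u} = a\vect{u}_1 + b\vect{u}_2$ of $\vect{T}(f)$. Shifting by one block of size $k$ multiplies $\vect{u}_1$ by $e^{-\i\alpha}e^{-(r+\tilde\beta)}$ and $\vect{u}_2$ by $e^{-\i\alpha}e^{-(r-\tilde\beta)}$, so the entries in the block containing index $i+k$ are a linear combination of these two exponential factors times the entries of the block containing index $i$. Passing to absolute values, the slower-decaying contribution of $\vect{u}_2$ dominates and gives the bound \eqref{eq:decay_outside}.

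The step that requires the most care is verifying that $0 < \tilde{\beta} < r$, which is needed both for $\vect{u}$ to lie in $\ell^2$ and for the leading order $e^{-(r-\tilde{\beta})}$ to be meaningful. The strict lower bound is the exclusion $\lambda \notin \sigma_{\mathrm{open}}$ noted above. The upper bound is precisely the geometric content of $\lambda \in \sigma_{\mathrm{wind}}$ via the ellipse characterisation used in the proof of \Cref{lemma: inclusion spectrum winding region}: $-g(\lambda)$ interior to $E_0$ is equivalent to the existence of some $\beta' \in (0, 2r)$ with $-g(\lambda) \in E_{\beta'}$, which, combined with the identity $-g(\lambda) = 2Ae^{r}\cos(\alpha)\cosh(\tilde{\beta})$, translates into $\tilde{\beta} < r$.
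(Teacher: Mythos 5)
Your proposal is correct and follows essentially the same route as the paper: fix $\alpha \in \{0,\pi\}$ via \Cref{Thm: alpha and beta fixed}, build the eigenvector as a linear combination of the two quasiperiodic extensions decaying like $e^{-(r+\tilde\beta)}$ and $e^{-(r-\tilde\beta)}$, and observe that the slower rate $r-\tilde\beta$ dominates and is positive precisely because $\lambda \in \sigma_{\mathrm{wind}}$. The only difference is that you spell out the verification $0 < \tilde\beta < r$ via the ellipse characterisation from \Cref{lemma: inclusion spectrum winding region}, a step the paper leaves implicit.
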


\begin{proof}
    For an eigenvalue outside the Floquet spectrum, by Theorem \ref{Thm: alpha and beta fixed} we know that $\alpha = \{0, \pi \}$ is fixed. So an eigenvector of the Toeplitz operator may be constructed in a similar fashion as in the poof of Theorem \ref{cor: decay rate spectrum} by the right linear combination,
    \begin{equation}
        \vect{u} = \Bigl( (a\vect{v}^\top + b \vect{w}^\top), e^{\i \alpha}(a e^{-(r+\tilde{\beta})}\vect{v}^\top + b e^{-(r-\tilde{\beta})}\vect{w}^\top), e^{2\i \alpha}(a e^{-2(r+\tilde{\beta})}\vect{v}^\top + b e^{-2(r-\tilde{\beta})}\vect{w}^\top), \dots \Bigr).
    \end{equation}
    By the construction of the eigenvector $\vect{u}$, we have $\frac{|\vect{u}^{(i + k)}|}{|\vect{u}^{(i)}|} = \mathcal{O}(e^{-B}$), where $B = \min\{r + \tilde{\beta}, r-\tilde{\beta}\} = r-\tilde{\beta}$ and for $\lambda\in \sigma_\text{wind}$ it follows $ B > 0$.
    The explicit value of $\tilde{\beta}$ is achieved by solving \eqref{eq: eigval of symbol} for $\tilde{\beta}$ and therefore the assertion follows.
\end{proof}

Frequencies outside the spectrum can only be achieved through the introduction of defects, such as by disrupting the repetition pattern of the tridiagonal Toeplitz matrix. The translational symmetry is disrupted by this defect, marking the defect site as the origin of the system. The system is then represented by a doubly infinite $k$-block-tridiagonal matrix $\vect{L}(f)$, known as a \emph{Laurent operator}, with perturbed entries at the row of index $i=m$.

\begin{corollary}\label{cor: decay rate outside spectrum}
    Let $\lambda\in \sigma_\text{wind}^\mathsf{c}$ and let $m$ be the defect site. Let $\vect{u}$ be an eigenvector of a defected tridiagonal $k$-Laurent operator $\vect{L}(f)$ with symbol $f \in \R^{k\times k}$ and $b_i c_i > 0$ for $1 \leq i \leq k$, then the eigenvector entries decay exponentially.
    \begin{itemize}
        \item Leading up to the defect site, that is for $i \leq m$
        \begin{equation}\label{eq: decay Laurent Left}
            \frac{|\vect{u}^{(i + k)}|}{|\vect{u}^{(i)}|} = e^{-(r-\tilde{\beta})}.
        \end{equation}
        \item Past the defect site, that is for $i \geq m$,
        \begin{equation}\label{eq: decay Laurent Right}
            \frac{|\vect{u}^{(i + k)}|}{|\vect{u}^{(i)}|} = e^{-(r+\tilde{\beta})}.
        \end{equation}
    \end{itemize}
    In both cases $r=\frac{1}{2}\log\Bigl( \prod_{i =1}^k \frac{b_i}{c_i} \Bigr)$ is fixed and $\tilde{\beta}(\lambda) = \arccosh{-\frac{g(\lambda)}{2Ae^{r}}}$ depends on the eigenvalue $\lambda$ and where we choose the positive root of the $\arccosh{\cdot}$.
\end{corollary}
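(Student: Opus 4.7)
The plan is to mirror the quasiperiodic-extension construction used in the proofs of Corollaries \ref{cor: decay rate spectrum} and \ref{cor: Toeplitz decay rate outside spectrum}, but with the crucial difference that on a doubly-infinite lattice the $\ell^2$ condition selects a unique quasiperiodic branch on each side of the defect.

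First, I would observe that away from the defect row $i=m$, the eigenvalue equation $\vect{L}(f)\vect{u}=\lambda\vect{u}$ coincides row-wise with the undefected Laurent equation. Hence on each half-line $\{i<m\}$ and $\{i>m\}$ the vector $\vect{u}$ must be a linear combination of the quasiperiodic modes of the symbol $f$ introduced before Corollary \ref{cor: decay rate spectrum}. Since $\sigma_{\mathrm{wind}}^{\mathsf{c}}\subseteq \sigma_{\mathrm{open}}^{\mathsf{c}}$ by Lemma \ref{lemma: inclusion spectrum winding region}, Theorem \ref{Thm: alpha and beta fixed} forces $\alpha\in\{0,\pi\}$, and Corollary \ref{cor: same eigenvalue} then yields exactly two admissible imaginary parts $\beta = r+\tilde\beta$ and $\beta = r-\tilde\beta$, with $\tilde\beta(\lambda)=\operatorname{arccosh}(-g(\lambda)/(2Ae^r))$ obtained by solving \eqref{eq: eigval of symbol} just as in the Toeplitz case.

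Second, I would locate $\tilde\beta$ relative to $r$. Using the characterisation $\sigma_{\mathrm{wind}}=\{\lambda\in\mathbb C:\operatorname{wind}(\psi(\mathbb T),-g(\lambda))>0\}$ recalled in the proof of Lemma \ref{lemma: inclusion spectrum winding region}, the outer boundary $E_0$ corresponds to $\tilde\beta=r$, so $\lambda\in\sigma_{\mathrm{wind}}^{\mathsf{c}}$ is equivalent to $\tilde\beta>r$. Consequently $r+\tilde\beta>0$ and $r-\tilde\beta<0$: the two branches have imaginary parts of opposite sign, and each branch can only be $\ell^2$ on one of the two half-lines.

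Third, I would impose square-integrability. A mode $\vect{u}^{(i+k)}=e^{-\beta}e^{-\i\alpha}\vect{u}^{(i)}$ belongs to $\ell^2$ near $+\infty$ iff $\beta>0$, and near $-\infty$ iff $\beta<0$. Hence on $\{i\geq m\}$ only the branch $\beta=r+\tilde\beta$ survives, giving \eqref{eq: decay Laurent Right}; on $\{i\leq m\}$ only $\beta=r-\tilde\beta$ survives, giving \eqref{eq: decay Laurent Left}. The free amplitudes of the surviving left and right modes are then fixed (up to overall normalisation) by the two coupling equations across the defect row $i=m$; this matching is finite-dimensional and does not affect the asymptotic ratios.

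The main obstacle is the $\ell^2$ mode-selection argument, i.e.\ justifying rigorously that the growing branch appears with zero coefficient on each side rather than leaking in through the defect. The key point is that any nontrivial admixture of the other branch would force an exponentially growing tail (since $|r\pm\tilde\beta|>0$), violating $\vect{u}\in\ell^2(\mathbb Z)$; so this coefficient must vanish, leaving precisely the decay rates \eqref{eq: decay Laurent Left}--\eqref{eq: decay Laurent Right}.
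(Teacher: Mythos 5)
Your proposal is correct and follows essentially the same route as the paper: both arguments reduce to Theorem \ref{Thm: alpha and beta fixed} fixing $\alpha\in\{0,\pi\}$, Corollary \ref{cor: same eigenvalue} and \eqref{eq: eigval of symbol} producing the two branches $\beta=r\pm\tilde\beta$ with $\tilde\beta>r$ off the winding region, and the observation that the decaying branch must be used on each side of the defect. The only (minor) difference is direction — the paper explicitly glues the two quasiperiodic extensions into an exact eigenvector, whereas you recover the same decomposition by arguing that $\ell^2$-admissibility kills the growing branch on each half-line — and both treatments leave the finite-dimensional matching at the defect row at the same level of detail.
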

\begin{proof}
    For an eigenvalue outside the Floquet spectrum, it follows by Theorem \ref{Thm: alpha and beta fixed} that $\alpha = \{0, \pi \}$ is fixed. 
    So, if $\lambda$ is an eigenvalue of $f(e^{-\i(\alpha + \i( r + \tilde{\beta}))})\in\R^{k\times k}$ with the eigenvector $\vect{v}\in \R^{k\times 1}$ then $\lambda$ is also an eigenvalue of $f(e^{-\i(\alpha + \i (r - \tilde{\beta}))})\in\R^{k\times k}$, with eigenvector $\vect{w}\in \R^{k\times 1}$. Suppose that $\vect{v}$ is linearly independent of $\vect{w}$, if not, a linearly independent eigenvector may be recovered by the same techniques as present in \cite{ammari2024spectra}. It is not hard to see that the quasiperiodic extension
    \begin{equation}
        \vect{u}_1 = \bigl(\dots, e^{-2\i(\alpha + \i( r - \tilde{\beta}))}\vect{w}^\top, e^{-\i(\alpha + \i( r - \tilde{\beta}))} \vect{w}^\top,  \vect{w}^\top, e^{\i(\alpha + \i( r - \tilde{\beta}))}\vect{w}^\top, e^{2\i(\alpha + \i( r - \tilde{\beta}))} \vect{w}^\top, \dots \bigr)
    \end{equation}
    is an eigenvector of $\vect{L}(f)$ everywhere but in the defected row. In the same way,
    \begin{equation}
        \vect{u}_2 = \bigl(\dots, e^{-2\i(\alpha + \i( r + \tilde{\beta}))} \vect{v}^\top, e^{-\i(\alpha + \i( r + \tilde{\beta}))} \vect{v}^\top, \vect{v}^\top, e^{\i(\alpha + \i( r + \tilde{\beta}))} \vect{v}^\top, e^{2\i(\alpha + \i( r + \tilde{\beta}))} \vect{v}^\top, \dots\bigr)
    \end{equation}
    is an eigenvector for $\vect{L}(f)$ everywhere except in the defected row. As a consequence a linear combination of the left hand side of $\vect{u}_1$ and the right hand side of $\vect{u}_2$ yields the vector
    \begin{equation}\label{eq: eigenvector defected laurent operator}
        \vect{u} = \bigl(\dots, a e^{2\i(\alpha + \i( r - \tilde{\beta}))}\vect{w}^\top, a e^{\i(\alpha + \i( r - \tilde{\beta}))} \vect{w}^\top,  a \vect{w}^\top + b\vect{v}^\top, be^{\i(\alpha + \i( r + \tilde{\beta}))} \vect{v}^\top, be^{2\i(\alpha + \i( r + \tilde{\beta}))} \vect{v}^\top, \dots\bigr)
    \end{equation}
    which is an exact eigenvector of the Laurent operator $\vect{L}(f)$. The desired decay bounds presented in \eqref{eq: decay Laurent Left} and \eqref{eq: decay Laurent Right} are achieved by the construction of $\vect{u}$. The explicit value of $\tilde{\beta}$ is achieved by solving \eqref{eq: eigval of symbol} for $\tilde{\beta}$ and therefore the statement follows.
\end{proof}

It is worth discussing the relation between the complex band structure and the winding region of the symbol. For any eigenvalue $\lambda \in \sigma_{\mathrm{wind}}$, the associated eigenvectors of the Toeplitz operator are exponentially localised on the edge. In terms of the complex band structure, the winding region corresponds to quasimomenta with a strictly positive imaginary part, that is, $r + \tilde{\beta} > 0$ and $r - \tilde{\beta} > 0$.
The advantage of the complex band structure over the non-trivial winding region is its ability to provide an explicit value for complex quasiperiodicity, offering sharp quantitative decay estimates compared to the qualitative assessment of the region of non-trivial winding.

For an eigenvalue $\lambda\in\sigma_{\mathrm{wind}}^\mathsf{c}$ the associated eigenvectors are no longer edge localised, and we may instead obtain localisation within the bulk by introducing a compact defect. Crucially, the complex band structure may still be used to characterise the decay properties of such eigenvectors, where the decay on each side of the defect is predicted by corresponding branch of the complex band structure. For $\lambda \in \sigma_{\mathrm{det}}$, it holds that $r-\tilde{\beta} =  0$ and there may be eigenvectors without decay to one side of the defect.

Figure \ref{fig: Spectral plot zones} provides a schematic illustration of defect eigenmodes in different spectral regions. These were computed using the defective capacitance matrix, which is an example of  tridiagonal $k$-Toeplitz matrix which will be discussed in detail in  Section \ref{sec:defect_inf}.

\begin{figure}[tbh]
    \centering
    \begin{tikzpicture}
    \draw (0,0) node{\includegraphics[width=0.80\linewidth]{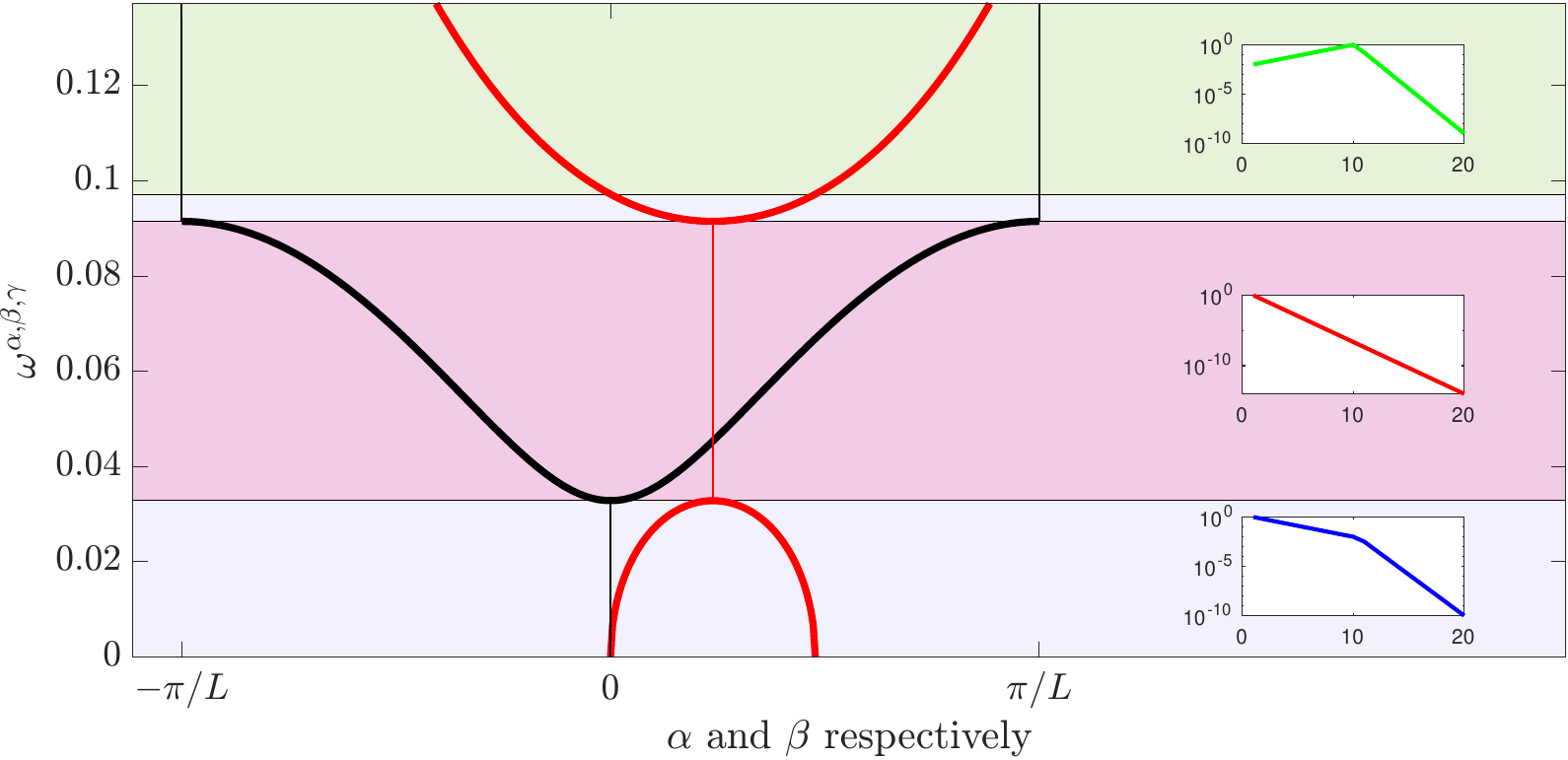}};
    \draw[thick,decorate,
	decoration =brace] (6.55,1.3) -- (6.55,-1) node[pos=0.5,right,xshift=1pt]{{$\sigma_{\mathrm{open}}$}};
    \draw[thick,decorate,
	decoration =brace] (7.5,1.5) -- (7.5,-2.3) node[pos=0.5,right,xshift=1pt]{$\sigma_{\mathrm{wind}}$};
    \draw[thick,decorate,
	decoration =brace] (7.5,3.1) -- (7.5,1.52) node[pos=0.5,right,xshift=1pt]{$\sigma_{\mathrm{wind}}^\mathsf{c}$};
    \end{tikzpicture}
    \caption{The light blue area, labelled $\sigma_{\mathrm{wind}}$, depicts the positive winding zone where eigenvectors are exponentially localized on one side of the system, although they may exhibit varying decay rates. Within the fully encompassed red area, called the open limit spectrum $\sigma_{\mathrm{open}}$, eigenmodes are uniformly localised on one side with consistent decay. The green region, $ \sigma_{\mathrm{wind}}^\mathsf{c}$, signifies zones where defect eigenmodes are exponentially localized within the bulk, displaying differing localisation strengths on either side of the defect. The calculations are performed based on the Capacitance Toeplitz matrix detailed in Section \ref{sec: quasiperiodic capacitance matrix} 
    }
    \label{fig: Spectral plot zones}
\end{figure}

\subsection{Eigenvectors of tridiagonal $k$-Toeplitz and $k$-Laurent matrices}\label{Sec: Pseudospectrum}

Despite the inapplicability of Bloch's theorem to finite resonator chains due to translational symmetry loss, this section demonstrates that the complex band structure can still accurately characterise localisation in finite systems, with an exponentially small error.

As pointed out in \cite{REICHEL1992153}, the spectra of non-Hermitian systems are highly sensitive to small perturbations. The solution is offered by forming $\varepsilon$-pseudoeigenvectors through the truncation of localised Bloch modes. This leads us to introduce pseudospectra. For a detailed discussion, see \cite[Section 1-2]{trefethen.embree2005Spectra}.

\begin{definition}\label{def: pseudospectrum}
    Let \(\varepsilon > 0\). Then, \(\lambda \in \mathbb{C}\) is an \(\varepsilon\)-pseudoeigenvalue of \(\mathbf{A}\in \mathbb{C}^{N\times N}\) if one of the following conditions is satisfied:
    \begin{enumerate}[(i)]
        \item \(\lambda\) is a proper eigenvalue of \(\mathbf{A} + \mathbf{E}\) for some \(\mathbf{E} \in \mathbb{C}^{N\times N}\) such that \(\lVert \mathbf{E} \rVert \leq \varepsilon\);
        \item $\lVert (\mathbf{A}-\lambda \Id) \mathbf{u} \rVert < \varepsilon$ for some vector $u$ with $\lVert \vect u \rVert = 1$;
        \item $\lVert (\mathbf{A}- \lambda \Id)^{-1} \rVert^{-1} \leq \epsilon$.
    \end{enumerate}
    The set of all $\varepsilon$-pseudoeigenvalues of $\vect A$, the $\varepsilon$-pseudospectrum, is denoted by $\sigma_{\epsilon}(\vect A)$. If some non-zero $\vect u$ satisfies $\lVert (\lambda\Id - \mathbf{A}) \mathbf{u} \rVert < \varepsilon$, then we say that $\vect u$ is an $\varepsilon$-pseudoeigenvector of $\mathbf{A}$. By the norm equivalence in finite dimensions any norm may be used.
\end{definition}
There are instances where the pseudospectrum aligns with the spectrum. This is formally asserted by the subsequent theorem \cite[Theorem 2.3]{trefethen.embree2005Spectra,}.

\begin{theorem}[Bauer-Fike] \label{Thm: Bauer_Fike Theorem}
    Suppose $\vect{A}  \in \C^{N \times N}$ is diagonalisable, $\vect{A} = \vect{V} \vect{\Lambda} \vect{V}^{-1}$. Then, for each $\varepsilon > 0$, 
    \begin{equation}
        \sigma(\vect{A} ) + \Delta_\varepsilon \subseteq \sigma_\varepsilon(\vect{A} ) \subseteq \sigma(\vect{A} ) + \Delta_{\varepsilon \kappa(\vect{V})},
    \end{equation}
    where $\kappa(\vect{V})$ is the condition number of $\vect{V}$ and 
    \begin{equation}
        \sigma(\vect{A} ) + \Delta_\varepsilon = \bigl\{ z: z = z_1 + z_2, ~z_1\in \sigma(\vect{A} ),~ |z_2| < \varepsilon \bigr\}.
    \end{equation}
\end{theorem}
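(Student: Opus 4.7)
The plan is to prove the two inclusions separately, working from the three equivalent characterisations of the pseudospectrum in Definition \ref{def: pseudospectrum} and exploiting the diagonalisation $\vect{A} = \vect{V}\vect{\Lambda}\vect{V}^{-1}$. I will use the spectral $2$-norm throughout, which is justified by the norm-equivalence remark after Definition \ref{def: pseudospectrum}.

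\emph{First inclusion} $\sigma(\vect{A}) + \Delta_\varepsilon \subseteq \sigma_\varepsilon(\vect{A})$. Given $z = z_1 + z_2$ with $z_1 \in \sigma(\vect{A})$ and $|z_2| < \varepsilon$, I would pick a unit eigenvector $\vect{u}$ with $\vect{A}\vect{u} = z_1 \vect{u}$. Then $(\vect{A} - z\Id)\vect{u} = (z_1 - z)\vect{u} = -z_2 \vect{u}$, so $\lVert (\vect{A} - z\Id)\vect{u}\rVert = |z_2| < \varepsilon$. This verifies criterion (ii) and hence $z \in \sigma_\varepsilon(\vect{A})$.

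\emph{Second inclusion} $\sigma_\varepsilon(\vect{A}) \subseteq \sigma(\vect{A}) + \Delta_{\varepsilon\kappa(\vect{V})}$. If $z \in \sigma(\vect{A})$ the claim is immediate, so assume $z \notin \sigma(\vect{A})$, making $\vect{A} - z\Id$ invertible. Criterion (iii) of Definition \ref{def: pseudospectrum} gives $\lVert (\vect{A} - z\Id)^{-1}\rVert \geq \varepsilon^{-1}$. Using the diagonalisation,
\begin{equation*}
(\vect{A} - z\Id)^{-1} = \vect{V}(\vect{\Lambda} - z\Id)^{-1}\vect{V}^{-1},
\end{equation*}
and submultiplicativity of the operator norm yields
\begin{equation*}
\lVert (\vect{A} - z\Id)^{-1}\rVert \leq \lVert \vect{V}\rVert \, \lVert \vect{V}^{-1}\rVert \, \lVert (\vect{\Lambda} - z\Id)^{-1}\rVert = \kappa(\vect{V}) \, \lVert (\vect{\Lambda} - z\Id)^{-1}\rVert .
\end{equation*}
Since $\vect{\Lambda} - z\Id$ is diagonal with entries $\lambda_i - z$, its spectral norm satisfies
\begin{equation*}
\lVert (\vect{\Lambda} - z\Id)^{-1}\rVert = \max_{\lambda_i \in \sigma(\vect{A})} \frac{1}{|\lambda_i - z|} = \frac{1}{\operatorname{dist}(z,\sigma(\vect{A}))}.
\end{equation*}
Combining the two inequalities gives $\varepsilon^{-1} \leq \kappa(\vect{V})/\operatorname{dist}(z,\sigma(\vect{A}))$, i.e.\ $\operatorname{dist}(z,\sigma(\vect{A})) \leq \varepsilon\kappa(\vect{V})$, which is precisely the statement that $z \in \sigma(\vect{A}) + \Delta_{\varepsilon\kappa(\vect{V})}$.

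\emph{Main obstacle.} No single step is technically hard, as this is a classical result. The only care needed is in the second inclusion: reducing from an arbitrary $\vect{A}$ to the diagonal case via the similarity $\vect{V}$, and observing that $\lVert(\vect{\Lambda} - z\Id)^{-1}\rVert^{-1} = \operatorname{dist}(z,\sigma(\vect{A}))$ for a diagonal matrix in the spectral norm. A minor book-keeping point concerns reconciling the strict versus non-strict inequalities appearing across the three equivalent conditions of Definition \ref{def: pseudospectrum}; this can be absorbed by noting that the inclusions above hold up to boundary effects in $\varepsilon$ which are harmless since $\varepsilon$ is arbitrary.
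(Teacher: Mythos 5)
The paper does not prove this statement; it is quoted directly from the cited reference (Trefethen and Embree, Theorem 2.3), so there is no in-paper proof to compare against. Your argument is the standard and correct one: the first inclusion via an exact eigenvector and criterion (ii), the second via the resolvent bound $\lVert (\vect{A}-z\Id)^{-1}\rVert \leq \kappa(\vect{V})\,\operatorname{dist}(z,\sigma(\vect{A}))^{-1}$ obtained from the diagonalisation. The strict-versus-non-strict boundary issue you flag is real, but it originates in the paper's Definition \ref{def: pseudospectrum} (whose condition (iii) is non-strict while the target set $\Delta_{\varepsilon\kappa(\vect{V})}$ is open), not in your reasoning, and your handling of it is appropriate.
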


In the sequel, we will consider defected tridiagonal operators. We assume that the defect position remains the same relative to the truncated structure, that is, $m = \lfloor N/2\rfloor$ for a defect located in the middle of the resonator chain.

For defective non-Hermitian operators, we derive a result pertaining to the convergence of defect eigenfrequencies.
Let us denote by $\vect{T}_N(f) \in \R^{N \times N}$ the tridiagonal $k$-Toeplitz matrix which results from truncating the operator $\vect{T}(f)$ to the first $N$ entries and let $\lambda_N$ be the associated eigenvalue. 

\begin{corollary}
    Let $\lambda \in \R \setminus \sigma_\text{open}$ be the defect eigenfrequency of the Toeplitz operator $\vect{T}(f)$ and $\lambda_N$ the defect eigenfrequency of the truncated operator $\vect{T}_N(f)$, then
    \begin{equation}
        |\lambda - \lambda_N| = \mathcal{O}(e^{-\tilde{\beta}N}) ~~\text{ as }N\to \infty,
    \end{equation}
    where $\tilde{\beta}(\lambda) = \arccosh{-\frac{g(\lambda)}{2Ae^{r}}}$ where the positive root of $\arccosh{\cdot}$ is chosen.
\end{corollary}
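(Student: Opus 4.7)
The plan is to convert the exact defect eigenvector of the infinite operator $\vect{T}(f)$ into an $\varepsilon$-pseudoeigenvector of the truncated matrix $\vect{T}_N(f)$ with exponentially small $\varepsilon$, and then to invoke Theorem~\ref{Thm: Bauer_Fike Theorem}, combined with the pseudo-Hermitian structure of $\vect{T}_N(f)$, to conclude that a genuine eigenvalue $\lambda_N$ lies within the same exponentially small distance of $\lambda$.

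\emph{Step 1 (exact eigenvector).} Since $\lambda \in \R \setminus \sigma_{\mathrm{open}}$, Theorem~\ref{Thm: alpha and beta fixed} forces $\alpha \in \{0,\pi\}$ while leaving two admissible imaginary parts $r \pm \tilde{\beta}$ of the complex quasimomentum, with $\tilde{\beta}(\lambda) = \arccosh{-g(\lambda)/(2Ae^{r})}$. As in the proof of Corollary~\ref{cor: decay rate outside spectrum}, a linear combination of the eigenvectors of $f(e^{-\i(\alpha + \i(r \pm \tilde{\beta}))})$ yields an exact eigenvector $\u$ of the defected infinite operator $\vect{T}(f)$ at eigenvalue $\lambda$, whose entries decay exponentially away from the defect site $m = \lfloor N/2 \rfloor$, at rates $e^{-|r-\tilde{\beta}|}$ on one side and $e^{-(r+\tilde{\beta})}$ on the other per $k$-block.

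\emph{Step 2 (pseudoeigenvector).} I would then form the truncated candidate $\u_N := \vect{P}_N \u$ and compute the residual $\vect{r}_N := (\vect{T}_N(f) - \lambda \Id)\u_N$. Since $\vect{T}(f) - \lambda\Id$ is $k$-block tridiagonal and annihilates $\u$ entrywise, $\vect{r}_N$ vanishes outside the rows within $k$ positions of the truncation boundaries, and its magnitude is controlled by the magnitudes of $\u$ at the entries just outside the window $[1,N]$. Applying the exponential decay from Step~1 together with $\|\u_N\| \gtrsim |u_m| = \mathcal{O}(1)$ yields $\|\vect{r}_N\|/\|\u_N\| = \mathcal{O}(e^{-\tilde{\beta} N})$, so that $\u_N$ is an $\varepsilon$-pseudoeigenvector of $\vect{T}_N(f)$ in the sense of Definition~\ref{def: pseudospectrum} with $\varepsilon = \mathcal{O}(e^{-\tilde{\beta} N})$.

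\emph{Step 3 (Bauer-Fike via pseudo-Hermitian similarity).} The condition $b_ic_i > 0$ makes $\vect{T}_N(f)$ similar, through a diagonal matrix $\vect{D}_N$ with entries scaling like $e^{ri/k}$, to a real symmetric matrix $\vect{S}_N := \vect{D}_N \vect{T}_N(f) \vect{D}_N^{-1}$. The eigenvector basis of $\vect{S}_N$ can be chosen orthonormal, so $\kappa(\vect{V}_N) = 1$ in Theorem~\ref{Thm: Bauer_Fike Theorem}. Applying this theorem to the transformed pseudoeigenvector $\vect{v}_N := \vect{D}_N \u_N$ therefore delivers a true eigenvalue $\lambda_N$ of $\vect{S}_N$, and hence of $\vect{T}_N(f)$, satisfying $|\lambda - \lambda_N| \leq \|(\vect{S}_N-\lambda\Id)\vect{v}_N\|/\|\vect{v}_N\| = \mathcal{O}(e^{-\tilde{\beta} N})$.

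\emph{Main obstacle.} The delicate point is to recover the clean exponent $\tilde{\beta}$ rather than the weaker $|\tilde{\beta}\pm r|$ that each individual side of the defect naively contributes. This symmetrisation is precisely what the similarity $\vect{D}_N$ accomplishes: the growth factor $e^{ri/k}$ compensates the non-reciprocal drift $r$ so that, in the ratio $\|\vect{D}_N \vect{r}_N\|/\|\vect{D}_N \u_N\|$, the $r$-dependence cancels between the weighted boundary residual and the weighted norm, leaving only $\tilde{\beta}$ in the final exponent. Tracking this cancellation uniformly, and treating the two regimes $\tilde{\beta} < r$ (i.e.\ $\lambda\in\sigma_{\mathrm{wind}}\setminus\sigma_{\mathrm{open}}$) and $\tilde{\beta} > r$ (i.e.\ $\lambda\in\sigma_{\mathrm{wind}}^{\mathsf{c}}$) simultaneously, constitutes the technical heart of the argument.
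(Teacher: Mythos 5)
Your proposal follows essentially the same route as the paper: truncate the exponentially localised defect mode to obtain an $\varepsilon_N$-pseudoeigenvector with $\varepsilon_N=\mathcal{O}(e^{-\tilde{\beta}N})$, conjugate by a diagonal matrix that removes the non-reciprocal drift $e^{r i/k}$ so that Theorem~\ref{Thm: Bauer_Fike Theorem} applies with controlled conditioning, and conclude. The only cosmetic difference is that you use the exact symmetrizer (giving $\kappa(\vect{V})=1$) where the paper uses the per-block balancing transform $\vect{R}_N(e^r)$ and cites a uniform bound $\kappa(\vect{V})\leq C$; the two diagonals differ by a factor bounded independently of $N$, so the arguments are equivalent.
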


\begin{proof}
    
Let us introduce a similarity transform which scales the argument of the symbol function, $\vect{T}_N\bigl(f(e^r\cdot)\bigr) = \vect{R}_N(e^r)^{-1}\vect{T}_N\vect{R}_N(e^r)$, where  $\bigl(\vect{R}_N(e^r)\bigr)_{ii} := e^{-r\lfloor\frac{i-1}{k}\rfloor}$.
In order to apply Theorem \ref{Thm: Bauer_Fike Theorem} we have to control the condition number $\kappa(\vect{V})$ for the eigenbasis of $\vect{T}_N\bigl(f(e^r\cdot)\bigr) = \vect{V}\vect{\Lambda}\vect{V}^{-1}$.
From the proof of \cite[Theorem 3.5]{ammari2024spectra}, we have the bound  $\kappa(\vect{V}) \leq C$ which is uniform in $N$. So, by Theorem \ref{Thm: Bauer_Fike Theorem}  it follows that,
\begin{equation}\label{eq: pseudospectrum bound collapsed toeplitz operator}
\sigma_{\varepsilon}\Bigl(\vect{T}_N\bigl(f(e^r\cdot)\bigr)\Bigr) \subset \sigma\Bigl(\vect{T}_N\bigl(f(e^r\cdot)\bigr)\Bigr) + \Delta_{C\varepsilon},
\end{equation}
A bound on the pseudospectrum can be achieved by the fact that defect modes are exponentially localised around the deffect site. By \eqref{eq: pseudospectrum bound collapsed toeplitz operator} we achieve the bound on the defect eigenfrequency,
\begin{equation}\label{eq: bound on collapsed defect eigenfrequency}
    \left|\lambda^{\vect{T}\bigl(f(e^s\cdot)\bigr)} - \lambda^{\vect{T}_N\bigl(f(e^s\cdot)\bigr)}_N\right| \leq C e^{-\tilde{\beta}N}.
\end{equation}
Since $\vect{T}_N\bigl(f(e^r\cdot)\bigr)$ is similar to $\vect{T}_N(f)$ and therefore share the same eigenvalues, it readily follows from \eqref{eq: bound on collapsed defect eigenfrequency} that,
\begin{equation}
    \left|\lambda^{\vect{T}(f)} - \lambda^{\vect{T}_N(f)}_N \right| \leq C e^{-\tilde{\beta}N},
\end{equation}
which concludes the proof.
\end{proof}

\begin{theorem}\label{thm: defect eigenfrequncy pseudoeigenvector wind}
    Let $\lambda \in \sigma_\text{wind}\setminus \sigma_\text{det}$ and let $\vect{v}_{0,N}$ be a truncated eigenvector of the defected Toeplitz operator $\vect{T}(f)$, then $\vect{v}_{0,N}$ is an $\varepsilon_N$-pseudoeigenvector of $\vect{T}_N(f)$, with $\varepsilon_N = \mathcal{O}(e^{-B \lfloor N/2k \rfloor})$, where $B =  r-\tilde{\beta}(\lambda)>0$ for $r = \frac{1}{2}\log\bigl( \prod_{i =1}^k \frac{b_i}{c_i} \bigr)$ and $\tilde{\beta}(\lambda) = \arccosh{-\frac{g(\lambda)}{2Ae^{r}}}$ where the positive root of  $\arccosh{\cdot}$ is chosen.
\end{theorem}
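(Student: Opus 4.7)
The strategy is to verify condition (ii) of Definition \ref{def: pseudospectrum} after an appropriate normalisation. I start from the exact relation $\vect{T}(f)\vect{v}_0 = \lambda \vect{v}_0$ for the infinite defected operator and compute the residual obtained by truncating. Writing $\vect{v}_0 = \vect{v}_{0,N} + (\Id - \vect{P}_N)\vect{v}_0$ and using $\vect{T}_N(f) = \vect{P}_N \vect{T}(f) \vect{P}_N$, a direct manipulation yields
\begin{equation}
(\vect{T}_N(f) - \lambda\Id)\vect{v}_{0,N} = -\vect{P}_N\vect{T}(f)(\Id - \vect{P}_N)\vect{v}_0,
\end{equation}
so the pseudoeigenvector error is entirely a boundary contribution from the tail of $\vect{v}_0$ past index $N$.

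Because the tridiagonal $k$-Toeplitz operator $\vect{T}(f)$ couples index $i$ only to indices $i\pm 1$, the right-hand side above is supported only on the last entry $i=N$, with value a bounded multiple of $\vect{v}_0^{(N+1)}$. Hence, up to a fixed constant depending only on the coupling coefficients $b_i,c_i$, we have $\|(\vect{T}_N(f) - \lambda\Id)\vect{v}_{0,N}\| \lesssim |\vect{v}_0^{(N+1)}|$, and the problem reduces to controlling this single tail entry.

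To bound $|\vect{v}_0^{(N+1)}|$, I appeal to the complex band structure. For $\lambda \in \sigma_{\mathrm{wind}}\setminus\sigma_{\mathrm{det}}$, Theorem \ref{Thm: alpha and beta fixed} and Corollary \ref{cor: same eigenvalue} supply two linearly independent quasiperiodic bulk solutions with imaginary quasimomenta $r\pm\tilde{\beta}$, both strictly positive, so both decay to the right. Following the template of Corollaries \ref{cor: Toeplitz decay rate outside spectrum} and \ref{cor: decay rate outside spectrum}, the eigenvector $\vect{v}_0$ of the defected Toeplitz operator is assembled from these quasiperiodic extensions, with coefficients chosen to match the defect row at $i = m = \lfloor N/2 \rfloor$ and the boundary condition at $i=1$. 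Past the defect, the slower rate $e^{-(r-\tilde{\beta})}$ dominates, and since $N+1$ lies $\lfloor N/2k\rfloor$ blocks beyond the defect this gives $|\vect{v}_0^{(N+1)}| = \mathcal{O}\bigl(e^{-(r-\tilde{\beta})\lfloor N/2k\rfloor}\bigr)$ relative to the peak entry of $\vect{v}_0$. Normalising so the peak is of order one yields $\|\vect{v}_{0,N}\| = \Theta(1)$, and dividing by this norm produces the claimed pseudoeigenvector bound. The explicit value $\tilde{\beta}(\lambda) = \operatorname{arccosh}(-g(\lambda)/(2Ae^{r}))$ follows by solving \eqref{eq: eigval of symbol}.

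The principal technical hurdle is justifying that the dominant decay past the defect is genuinely $r-\tilde{\beta}$ rather than the faster $r+\tilde{\beta}$: in isolation, an $\ell^2$ tail on $\mathbb{N}$ could be built from the faster mode alone, which would give an (incorrect) sharper bound. However, $\vect{v}_0$ must simultaneously satisfy matching at the defect row and at the left edge, which generically couples both quasiperiodic modes and forces a non-vanishing coefficient for the slow one. Making this solvability argument precise - mirroring the coefficient-matching logic already used in Corollaries \ref{cor: Toeplitz decay rate outside spectrum} and \ref{cor: decay rate outside spectrum} - is the core obstacle; once done, the exponential bound in the statement is immediate.
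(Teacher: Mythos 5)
Your argument is correct and lands on the same basic mechanism as the paper — truncate the exact eigenvector and control the residual through the complex-band-structure decay rates — but the two proofs localise the residual in different places. You use the clean identity $(\vect{T}_N(f)-\lambda\Id)\vect{v}_{0,N} = -\vect{P}_N\vect{T}(f)(\Id-\vect{P}_N)\vect{v}_0$, which, by tridiagonality, puts the entire error in the last row and reduces everything to the single tail entry $\vect{v}_0^{(N+1)}$; this presumes $\vect{T}_N(f)$ carries the defect row, and in that reading it actually yields a decay over roughly $\lfloor N/k\rfloor$ blocks, i.e.\ a bound at least as strong as the stated $\mathcal{O}(e^{-(r-\tilde{\beta})\lfloor N/2k\rfloor})$. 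The paper instead takes the residual to be supported from the defect row $m=\lfloor N/2\rfloor$ onwards and bounds it by the eigenvector's magnitude at the defect site, which is $\mathcal{O}(e^{-(r-\tilde{\beta})\lfloor N/2k\rfloor})$ after normalising the edge-localised mode to have $O(1)$ first entry; this is what produces the $\lfloor N/2k\rfloor$ in the exponent. Both routes give the claimed estimate, and your projection identity is arguably the tidier bookkeeping. Two small corrections to your discussion: first, for $\lambda\in\sigma_{\mathrm{wind}}$ the eigenvector is edge-localised with its peak at the first row, not at the defect, so measuring the decay of $\vect{v}_0^{(N+1)}$ ``beyond the defect relative to the peak'' conflates two reference points — harmlessly, since counting from row $1$ only increases the number of decaying blocks. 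Second, the ``core obstacle'' you flag (whether the slow mode $e^{-(r-\tilde{\beta})}$ is genuinely present rather than only the faster $e^{-(r+\tilde{\beta})}$) is immaterial for the theorem as stated: $\varepsilon_N=\mathcal{O}(e^{-B\lfloor N/2k\rfloor})$ is an upper bound, and a faster-decaying tail only makes it easier to satisfy. That question matters for the sharpness of the estimate (which the paper addresses numerically), not for its validity, so no additional solvability argument is needed to close the proof.
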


\begin{proof}
    The truncated eigenvector $\vect{v}_{0,N}$ satisfies the eigenvalue problem $\vect{T}_N\vect{v}_{0,N} = \lambda_N\vect{v}_{0,N}$ in the rows leading up to the defect situated at $m = \lfloor N/2 \rfloor$. This allows for the following estimate,
    \begin{equation}
        \Bigl\rVert \Bigl(\vect{T}_{N} - \lambda\Id_N\Bigr) \vect{v}_{0,N}\Bigr\rVert_2 = \left \lVert \left(\begin{smallmatrix}
            \vect{0}_{m-1}\\
            b_k\vect{v}_{0,N}^{(m)}\\
            \vdots\\
            \vect{v}_{0,N}^{(m)}
        \end{smallmatrix}\right)\right\rVert_2 \leq K \sum_{i = m}^N \bigl(\vect{v}_{0,N}^{(i)} \bigr)^2 =: \varepsilon_N.
    \end{equation}
    By the estimate on the $N$-th entry of the eigenvector in \eqref{eq:decay_outside}, it holds
    \begin{equation}
        \sum_{i = m}^N \bigl( \vect{v}_{0,N}^{(i)} \bigr)^2 \leq \vect{v}_{0,N}^{(m)} \leq \mathcal{O}(e^{-(r-\tilde{\beta})\lfloor N/2k\rfloor}),
    \end{equation}
    which achieves the desired bound on $\varepsilon_N$. Since $\lambda \in \sigma_\text{wind}\setminus\sigma_\text{det}$ it holds that $r-\tilde{\beta}(\lambda) > 0$.
\end{proof}

Let $\vect{L}(f)$ be a tridiagonal Laurent operator with a defect a site $m$ and let $\vect{L}_N(f)$ be the truncated version, where the truncation is centred around the defect site. A pseudoeigenvector for a Laurent matrix is constructed as follows.

\begin{theorem}\label{thm: defect eigenfrequncy pseudoeigenvector}
    Let $\lambda \in \sigma_{\mathrm{wind}}^\mathsf{c}\setminus \sigma_\text{det}$ and let $\vect{v}_{0,N}$ be a truncated eigenvector of the defected Laurent operator $\vect{L}(f)$, then $\vect{v}_{0,N}$ is an $\varepsilon_N$-pseudoeigenvector of $\vect{L}_N(f)$, with $\varepsilon_N = \mathcal{O}(e^{-B \lfloor N/2k \rfloor})$, where $B:=\tilde{\beta}(\lambda)-r > 0$, for $r=\frac{1}{2}\log\Bigl( \prod_{i =1}^k \frac{b_i}{c_i} \Bigr)$ and $\tilde{\beta}(\lambda) = \arccosh{-\frac{g(\lambda)}{2Ae^{r}}}$  where the positive root of  $\arccosh{\cdot}$ is chosen.
    
\end{theorem}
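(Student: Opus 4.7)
The plan is to mirror the argument of Theorem \ref{thm: defect eigenfrequncy pseudoeigenvector wind} but adapted to the doubly infinite Laurent setting, where the defect mode is bulk-localised around the defect site $m$ rather than edge-localised. First, I would take $\vect{u}$ to be the exact eigenvector of the defected Laurent operator $\vect{L}(f)$ in the closed form \eqref{eq: eigenvector defected laurent operator}, and let $\vect{v}_{0,N}$ be its restriction to the $N$ entries of the truncation window, centred on the defect so that the top and bottom of the window each sit at distance $\lfloor N/2 \rfloor$ from $m$.

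Next, I would compute the residual $\bigl(\vect{L}_N(f)-\lambda\,\mathrm{Id}_N\bigr)\vect{v}_{0,N}$ row by row. Because $\vect{u}$ satisfies $\vect{L}(f)\vect{u} = \lambda \vect{u}$ everywhere, including at the defect row (by construction of the linear combination in \eqref{eq: eigenvector defected laurent operator}), the residual vanishes in every interior row of the window; it is only at the top-most and bottom-most rows of the truncated block that $\vect{L}_N(f)$ fails to access the entries of $\vect{u}$ lying just outside the window. The residual is therefore supported on at most one block at each end, with entries equal (up to the constants $b_i$ or $c_i$) to $\vect{u}^{(m-\lfloor N/2\rfloor -1)}$ at the top and $\vect{u}^{(m+\lfloor N/2 \rfloor)}$ at the bottom. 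Iterating the ratios \eqref{eq: decay Laurent Left} and \eqref{eq: decay Laurent Right} of Corollary \ref{cor: decay rate outside spectrum} across $\lfloor N/(2k)\rfloor$ blocks then yields
\begin{equation*}
\bigl|\vect{u}^{(m-\lfloor N/2\rfloor -1)}\bigr| = \mathcal{O}\bigl(e^{-(\tilde\beta-r)\lfloor N/(2k)\rfloor}\bigr),\qquad \bigl|\vect{u}^{(m+\lfloor N/2\rfloor)}\bigr| = \mathcal{O}\bigl(e^{-(\tilde\beta+r)\lfloor N/(2k)\rfloor}\bigr).
\end{equation*}

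Since $\lambda \in \sigma_\text{wind}^\mathsf{c}\setminus\sigma_\text{det}$ guarantees $\tilde\beta(\lambda)-r>0$, both terms are exponentially small, and the slower of the two, $e^{-B\lfloor N/(2k)\rfloor}$ with $B = \tilde\beta-r$, dominates the $\ell^2$ norm of the residual. Dividing by $\lVert\vect{v}_{0,N}\rVert_2$, which is bounded below by a constant uniform in $N$ because the mass of $\vect{u}$ concentrates in a fixed neighbourhood of $m$, gives the claimed bound $\varepsilon_N = \mathcal{O}\bigl(e^{-B\lfloor N/(2k)\rfloor}\bigr)$ and confirms that $\vect{v}_{0,N}$ is an $\varepsilon_N$-pseudoeigenvector of $\vect{L}_N(f)$ in the sense of Definition \ref{def: pseudospectrum}(ii). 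The main obstacle is the bookkeeping of the two asymmetric decay rates on either side of the defect: one must verify that the residual from the left boundary, which decays only at the slower rate $\tilde\beta-r$, still produces an exponentially small error and that this rate, not the faster $\tilde\beta+r$ arising only on the right, governs the worst case. Once this asymmetry is handled, the argument parallels the edge-localised Toeplitz case almost verbatim.
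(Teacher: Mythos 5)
Your proposal is correct and follows essentially the same route as the paper: the residual of the truncated defect mode is supported only on the first and last rows of the window, those boundary entries are bounded via the two asymmetric decay rates of Corollary \ref{cor: decay rate outside spectrum}, and the slower rate $B=\tilde\beta-r>0$ (positive precisely because $\lambda\in\sigma_{\mathrm{wind}}^\mathsf{c}\setminus\sigma_{\mathrm{det}}$) governs the bound. The only cosmetic differences are that you bound the just-outside-window entries rather than $\vect{v}_{0,N}^{(1)}$ and $\vect{v}_{0,N}^{(N)}$ themselves, and that you make the normalisation of $\vect{v}_{0,N}$ explicit, neither of which changes the argument.
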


\begin{proof}
    The truncated eigenvector $\vect{v}_{0,N}$ satisfies the eigenvalue problem $\vect{L}_N\vect{v}_{0,N} = \lambda_N\vect{v}_{0,N}$ in all but the first and last rows. This allows for the following estimate,
    \begin{equation}\label{eq: eigenvector estimate}
        \Bigl\rVert \Bigl(\vect{L}_N - \lambda_N\Id_N\Bigr) \vect{v}_{0,N}\Bigr\rVert_2 = \left \lVert \left(\begin{smallmatrix}
            -c_k \vect{v}_{0,N}^{(1)}\\
            \vect{0}_{N-2}\\
            b_k\vect{v}_{0,N}^{(N)}
        \end{smallmatrix}\right)\right\rVert_2 =  c_k^2\bigl(\vect{v}_{0,N}^{(1)} \bigr)^2  +b_k^2\bigl(\vect{v}_{0,N}^{(N)} \bigr)^2 =: \varepsilon_N, 
    \end{equation}
    where $c_k$ and $b_k$ are the entries of the $k$-Toeplitz matrix which are constant and independent of $N$. Bounds on the first and last eigenvector entries can be achieved as follows. Since the defect is situated at the index $m = \lfloor N/2 \rfloor$, by Corollary \ref{cor: decay rate outside spectrum} the first and last entries are therefore explicit, so
    \begin{equation}
        \vect{v}_{0,N}^{(N)} = K e^{-(r+\tilde{\beta})\lfloor N/2k \rfloor} \quad\text{and}\quad \vect{v}_{0,N}^{(1)} = K e^{(r-\tilde{\beta})\lfloor N/2k \rfloor}.
    \end{equation}
    Let us define $B:= \min( r + \tilde{\beta}, \tilde{\beta} - r) = \tilde{\beta} -r$, then as $\lambda \in \sigma_{\mathrm{wind}}^\mathsf{c}\setminus \sigma_\text{det}$ it follows that $B > 0$. By \eqref{eq: eigenvector estimate} it holds that 
    \begin{equation}
        \varepsilon_N \leq \mathcal{O}(e^{-B\lfloor N/2k\rfloor})\quad \text{as }N \to \infty.
    \end{equation}
    This shows that $\lambda_N$ is an $\varepsilon_N$-pseudoeigenvalue of $\vect{L}_{N}(f)$ and concludes the proof. 
\end{proof}

In the case $\lambda \in \sigma_\text{det}$ the construction for exponentially good pseudoeigenvectors as in Theorems \ref{thm: defect eigenfrequncy pseudoeigenvector wind} and \ref{thm: defect eigenfrequncy pseudoeigenvector} fails, through the correct normalisation one can however achieve an algebraic bound on $\varepsilon_N$.
Figure \ref{fig: Spectral Convergence} demonstrates that Theorems \ref{thm: defect eigenfrequncy pseudoeigenvector wind} and \ref{thm: defect eigenfrequncy pseudoeigenvector} offer correct and sharp exponential bounds for the pseudospectrum.

\begin{figure}[hbt]
    \centering
    \subfloat[][Convergence of defect eigenfrequency.]
    {\includegraphics[height=0.18\linewidth]{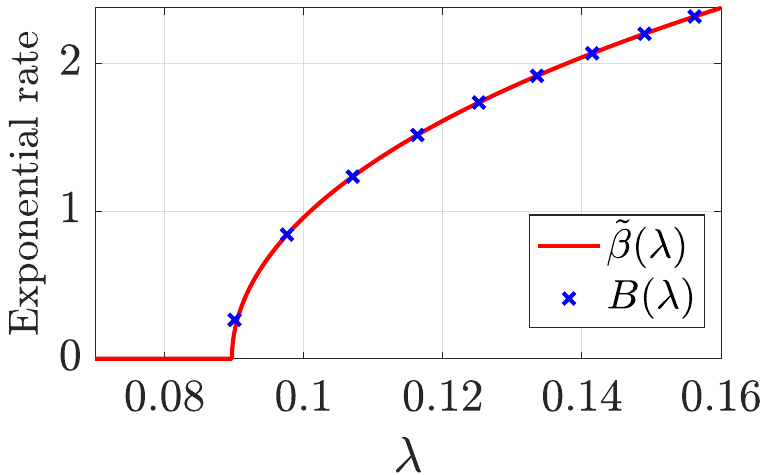}}\quad
    \subfloat[][Convergence of the Pseudospectrum for the Toeplitz operator.]
    {\includegraphics[height=0.18\linewidth]{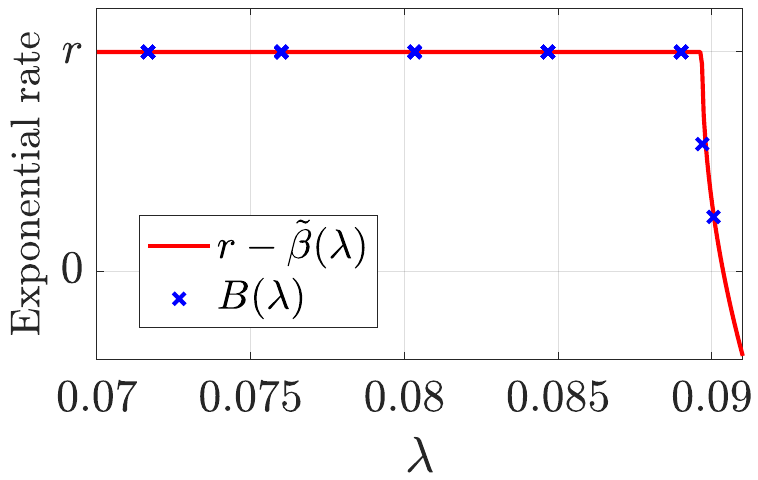}}\qquad
    \subfloat[][Convergence of the Pseudospectrum for the Laurent operator.]
    {\includegraphics[height=0.18\linewidth]{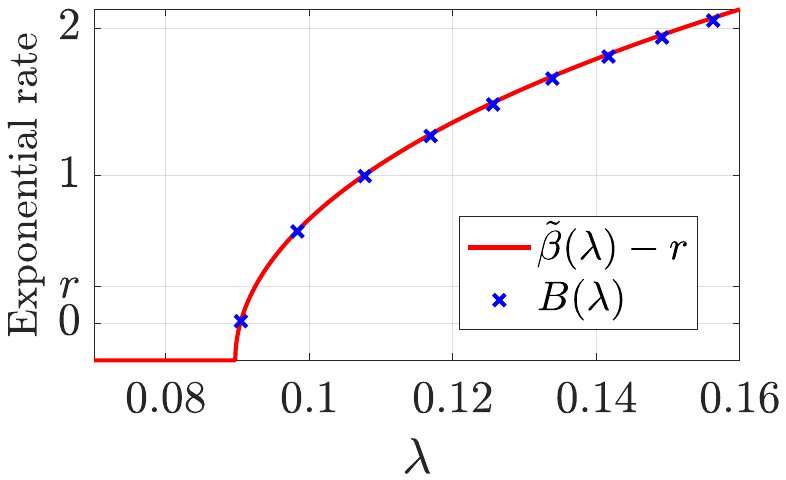}}
    \caption{ Truncated eigenvectors produce $\varepsilon_N$-pseudoeigenvectors. We examine the analytical convergence bounds of these pseudoeigenvectors, represented by the solid red line, against numerical convergence rates across various band gap frequencies of $\lVert (\vect{A}_N-\lambda_N)\vect{v}_{0,N}\rVert \leq e^{-B(\lambda)N}=: \varepsilon_N$  marked by the blue crosses. The close agreement observed signifies that the presented bounds are sharp estimates.}
    \label{fig: Spectral Convergence}
\end{figure}

The construction of $\varepsilon$-pseudoeigenvectors by truncating associated eigenvectors has the advantage of maintaining the same decay properties.
Therefore, the complex band structure provides decay estimates for eigenvectors of finite tridiagonal $k$-Toeplitz matrices.

\section{Subwavelength Localisation in Defected non-Hermitian Resonator Chains}\label{Sec: quasiperiodic Gauge capacitance}
Next, we will illustrate the main results of Section \ref{sec: tridiagonal k toeplitz} in the setting of one-dimensional subwavelength wave physics. The results detailed in \cite{FoundationsSkinEffect} have shown that the \emph{gauge capacitance matrix} yields an asymptotic description of the resonances in a chain of subwavelength resonators. In particular, the capacitance matrix has the form of a non-Hermitian tridiagonal $k$-Toeplitz matrix.

\subsection{Setting and Problem Formulation}

 The introduction of an imaginary gauge potential $\gamma$ breaks the hermiticity of a subwavelength resonator chain. We consider a unit cell $Y$ of length $L$ and spanning the lattice $\Lambda$.  A unit cell $Y$ contains $k$  resonators \(D := (x_i^{\mathrm{L}}, x_i^{\mathrm{R}}\)), where \((x_i^{\mathrm{L}, \mathrm R})_{1 \leq i \leq k} \subset \R\) are the $2k$ extremities such that $x_i^{\mathrm{L}} < x_i^{\mathrm{R}} < x_{i+1}^{\mathrm{L}}$ for any $1 \leq i \leq k$. The lengths of the $i$-th resonators will be denoted by $\ell_i = x_i^{\mathrm{R}}-x_i^{\mathrm{L}}$ and the spacings between the $i$-th and the $(i+1)$-th resonators will be denoted by $s_i = x_{i+1}^{\mathrm{L}}-x_i^{\mathrm{R}}$. Furthermore, we assume that the resonators are $k$ periodically repeated, that is, $s_{i+k} = s_i$. This setup is illustrated in Figure \ref{fig:setting}.

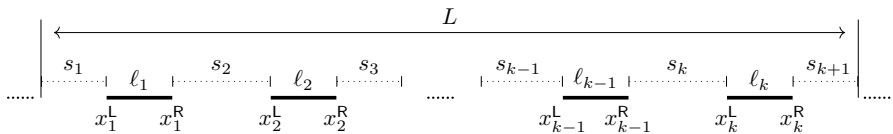
\begin{figure}[htb]
    \centering
    \begin{adjustbox}{width= 12cm} 
    \begin{tikzpicture}
        \coordinate (x1l) at (1,0);
        \path (x1l) +(1,0) coordinate (x1r);
        \coordinate (s0) at (0.5,0.7);
        \path (x1r) +(0.75,0.7) coordinate (s1);
        
        \path (x1r) +(1.5,0) coordinate (x2l);
        \path (x2l) +(1,0) coordinate (x2r);
        \path (x2r) +(0.5,0.7) coordinate (s2);
        \path (x2r) +(1,0) coordinate (x3l);
        \path (x3l) +(1,0) coordinate (x3r);
        \path (x3r) +(1,0.7) coordinate (s3);
        \path (x3r) +(2,0) coordinate (x4l);
        \path (x4l) +(1,0) coordinate (x4r);
        \path (x4r) +(0.4,0.7) coordinate (s4);
        \path (x4r) +(1,0) coordinate (dots);
        \path (dots) +(1,0) coordinate (x5l);
        \path (x5l) +(1,0) coordinate (x5r);
        \path (x2r) +(1.2,0) coordinate (x6l);
        \path (x5r) +(0.875,0.7) coordinate (s5);
        \path (x6l) +(1,0) coordinate (x6r);
        \path (x6r) +(1.25,0) coordinate (x7l);
        \path (x6r) +(0.57,0.7) coordinate (s6);
        \path (x7l) +(1,0) coordinate (x7r);
        \path (x7r) +(1.5,0) coordinate (x8l);
        \path (x7r) +(0.75,0.7) coordinate (s7);
        \path (x8l) +(1,0) coordinate (x8r);
        \coordinate (s8) at (12,0.7);

        \draw[dotted, line cap=round, line width=1pt, dash pattern=on 0pt off 2\pgflinewidth] ($(x1l) - (1.5, 0)$) -- ($(x1l)- (1.1, 0)$);
        \draw ($(x1l)- (1, 0)$) -- ++(0,1.2);

        \draw[dotted,|-|] ($(x1l)- (1, -0.25)$) -- ($(x1l) + (0, 0.25)$);
        
        \draw[ultra thick] (x1l) -- (x1r);
        \node[anchor=north] (label1) at (x1l) {$x_1^{\iL}$};
        \node[anchor=north] (label1) at (x1r) {$x_1^{\iR}$};
        \node[anchor=south] (label1) at ($(x1l)!0.5!(x1r)$) {$\ell_1$};
        \draw[dotted,|-|] ($(x1r)+(0,0.25)$) -- ($(x2l)+(0,0.25)$);
        \draw[ultra thick] (x2l) -- (x2r);
        \node[anchor=north] (label1) at (x2l) {$x_2^{\iL}$};
        \node[anchor=north] (label1) at (x2r) {$x_2^{\iR}$};
        \node[anchor=south] (label1) at ($(x2l)!0.5!(x2r)$) {$\ell_2$};
        \draw[dotted,|-|] ($(x2r)+(0,0.25)$) -- ($(x3l)+(0,0.25)$);
        \draw[dotted, line cap=round, line width=1pt, dash pattern=on 0pt off 2\pgflinewidth] ($(x3l)+(0.4,0)$) -- ($(x6r)-(0.4,0)$);
        
        \draw[dotted,|-|] ($(x6r)+(0,0.25)$) -- ($(x7l)+(0,0.25)$);
        \draw[ultra thick] (x7l) -- (x7r);
        \node[anchor=north] (label1) at (x7l) {$x_{k-1}^{\iL}$};
        \node[anchor=north] (label1) at (x7r) {$x_{k-1}^{\iR}$};
        \node[anchor=south] (label1) at ($(x7l)!0.5!(x7r)$) {$\ell_{k-1}$};
        \draw[dotted,|-|] ($(x7r)+(0,0.25)$) -- ($(x8l)+(0,0.25)$);
        \draw[ultra thick] (x8l) -- (x8r);
        \draw[dotted,|-|] ($(x8r)+(0,0.25)$) -- ($(x8r)+(1,0.25)$);
        \draw ($(x8r)+ (1,0)$) -- ++(0,1.2);  

        \draw[<->] ($(x1l) + (-0.8,1)$) -- ($(x8r) + (0.8,1)$) node[midway, above] {$L$};
    
        \draw[dotted, line cap=round, line width=1pt, dash pattern=on 0pt off 2\pgflinewidth] ($(x8r)+(1.1, 0)$) -- ($(x8r) + (1.5, 0)$);
        \node[anchor=north] (label1) at (x8l) {$x_{k}^{\iL}$};
        \node[anchor=north] (label1) at (x8r) {$x_{k}^{\iR}$};
        \node[anchor=south] (label1) at ($(x8l)!0.5!(x8r)$) {$\ell_k$};
        \node[anchor=north] (label1) at (s0) {$s_1$};
        \node[anchor=north] (label1) at (s1) {$s_2$};
        \node[anchor=north] (label1) at (s2) {$s_3$};
        
        \node[anchor=north] (label1) at (s6) {$s_{k-1}$};
        \node[anchor=north] (label1) at (s7) {$s_{k}$};
        \node[anchor=north] (label1) at (s8) {$s_{k+1}$};
    \end{tikzpicture}
    \end{adjustbox}
    \caption{Unit cell $Y$ of an infinite chain of subwavelength resonators, with lengths $(\ell_i)_{1\leq i\leq k}$ spacings $(s_{i})_{1\leq i\leq k+1}$ spanned by the lattice vector $\Lambda$ of length $L$.}
    \label{fig:setting}
\end{figure}

The Helmholtz equation with imaginary gauge potential of amplitude $\gamma \in \R\setminus \{0\}$ is given by 
\begin{equation}\label{eq: wave equation skin effect}
\begin{cases}u^{\prime \prime}(x)+\gamma u^{\prime}(x)+\frac{\omega^2}{v_i^2} u(x) =0, & x \in D_i \\ u^{\prime \prime}(x)+\frac{\omega^2}{v_0^2}u(x)=0, & x \in \mathbb{R} \backslash D, \\ \left.u\right|_{\iR}(x_i^{\iL, \iR})-\left.u\right|_{\iL}(x_i^{\iL, \iR})=0, & \text {for all } 1 \leq i \leq k, \\ \left.\frac{\mathrm{d} u}{\mathrm{~d} x}\right|_{\iR}\left(x_i^{\iL}\right)=\left.\delta \frac{\mathrm{d} u}{\mathrm{~d} x}\right|_{\iL}\left(x_i^{\iL}\right), & \text {for all } 1 \leq i \leq k, \\  \left.\frac{\mathrm{d} u}{\mathrm{~d} x}\right|_{\iL}\left(x_i^{\iR}\right)=\left. \delta \frac{\mathrm{d} u}{\mathrm{~d} x}\right|_{\iR}\left(x_i^{\iR}\right), & \text {for all } 1 \leq i \leq k, \\ 
u(x + \ell) = e^{\i(\alpha + \i \beta)\ell}u(x), &\text{for all } \ell \in \Lambda.
\end{cases}
\end{equation}
In particular, we follow Theorem \ref{Thm: spectral limit CBZ} and consider a complex Floquet-Bloch condition in the last line of \eqref{eq: wave equation skin effect}.

\subsection{Subwavelength Regime}\label{sec: quasiperiodic capacitance matrix}
In the subwavelength limit, one considers a medium with \emph{high contrast inclusions} defined by $ 0 < \delta \ll 1$ (see, for example, \cite{ammari.fitzpatrick.ea2018Mathematical,ammari2024functional}).
\begin{definition}
    Given $\delta > 0$, a \emph{subwavelength resonant frequency} $\omega = \omega(\delta)$ is defined to have non-negative real part and,
    \begin{enumerate}[(i)]
    \item there exists a non-trivial solution to the scattering problem \eqref{eq: wave equation skin effect}.
    \item $\omega$ depends continuously on $\delta$ and satisfies $\omega(\delta) \xrightarrow{\delta \to 0} 0$.
    \end{enumerate}
\end{definition}

\begin{definition}Consider solutions $V^{\alpha, \beta}_i$ $:\mathbb{R} \to \R$ of the problem
\begin{equation}\label{eq: chain equation}
    \begin{cases}
        \frac{\mathrm{d}^2}{\mathrm{d}x^2}V^{\alpha, \beta}_i, = 0, & x\in Y \setminus D,\\
        V^{\alpha, \beta}_i(x) = \delta_{ij}, & x \in D_j, \\
        V^{\alpha, \beta}_i(x + mL) = e^{\i (\alpha + \i\beta)mL}V^{\alpha, \beta}_i(x), & m \in \Z,
    \end{cases}
\end{equation}
for $1 \leq i, j \leq k$. Then the \emph{quasiperiodic capacitance matrix} is defined coefficient-wise by
     \begin{equation}\label{eq: cap}
         \widehat{C}_{ij}^{\alpha, \beta} = - \frac{l_i}{\int_{D_i} e^{\gamma x}\mathrm{d}x} \left(-e^{\gamma x_j^\iL}\left.\frac{\mathrm{d} V^{\alpha, \beta}_i}{\mathrm{d}x}\right|_\iL(x^\iL_j) + e^{\gamma x_j^\iR} \left. \frac{\mathrm{d} V^{\alpha, \beta}_i}{\mathrm{d}x}\right|_\iR(x_j^\iR) \right).
     \end{equation}
     We also define the \emph{generalised quasiperiodic capacitance matrix}, denoted by $\widehat{\mathcal{C}}^{\alpha, \beta}=(\widehat{\mathcal{C}}^{\alpha, \beta}_{ij})\in\mathbb{C}^{k\times k}$, to be the square matrix given by
		\begin{equation*}
			\widehat{\mathcal{C}}^{\alpha, \beta}_{ij}=\frac{\delta v_i^2}{l_i} \widehat{C}^{\alpha, \beta}_{ij}, \quad i,j=1,\dots,k.
		\end{equation*}
	\end{definition}

\begin{lemma}
    For $\gamma \in \R\setminus\{0\}$, the quasiperiodic capacitance matrix is given by
    \begin{align}\label{eq: quasiperiodic capacitance formula}
        \widehat{C}^{\alpha, \beta}_{ij} &= \left( \frac{\gamma}{s_i}\frac{\ell_i}{1-e^{-\gamma \ell_i}} - \frac{\gamma}{s_{i-1}} \frac{\ell_i}{1-e^{\gamma \ell_i}}\right) \delta_{ij} + \left(-\frac{\gamma}{s_i}\frac{\ell_j}{1-e^{-\gamma\ell_i}}\right)\delta_{i(j-1)} + \left(\frac{\gamma}{s_j}\frac{\ell_j}{1-e^{\gamma \ell_j}}\right)\delta_{i(j+1)} \nonumber\\
        &\quad + \left(-e^{-\i(\alpha + \i \beta)L}\frac{\gamma}{s_N}\frac{\ell_1}{1-e^{-\gamma\ell_1}}\right)\delta_{ik}\delta_{j1} + \left(e^{\i(\alpha + \i\beta)L}\frac{\gamma}{s_N}\frac{\ell_N}{1-e^{\gamma \ell_N}}\right)\delta_{i1}\delta_{jk}.
    \end{align}
\end{lemma}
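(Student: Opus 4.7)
The plan is to solve the boundary-value problem \eqref{eq: chain equation} explicitly. Since $V_i^{\alpha,\beta}$ is harmonic (second derivative zero) on each spacing $(x_j^{\iR}, x_{j+1}^{\iL})$ and takes the prescribed boundary values $\delta_{ij}$ on each resonator, it is piecewise affine between resonators. Concretely, for $1 \leq j \leq k-1$ the restriction to the spacing of length $s_j$ interpolates linearly from $\delta_{ij}$ at $x_j^{\iR}$ to $\delta_{i(j+1)}$ at $x_{j+1}^{\iL}$, giving the one-sided derivatives
\begin{equation*}
    \left.\frac{\mathrm{d} V_i^{\alpha,\beta}}{\mathrm{d}x}\right|_{\iR}(x_j^{\iR}) = \frac{\delta_{i(j+1)}-\delta_{ij}}{s_j}, \qquad \left.\frac{\mathrm{d} V_i^{\alpha,\beta}}{\mathrm{d}x}\right|_{\iL}(x_{j+1}^{\iL}) = \frac{\delta_{i(j+1)}-\delta_{ij}}{s_j}.
\end{equation*}
For the spacing $s_k$ connecting $D_k$ to $D_1$ in the neighbouring cell the complex quasiperiodicity $V_i^{\alpha,\beta}(x+L)=e^{\i(\alpha+\i\beta)L}V_i^{\alpha,\beta}(x)$ must be invoked: the slope becomes $(e^{\i(\alpha+\i\beta)L}\delta_{i1}-\delta_{ik})/s_k$ on the cell's right boundary and $(\delta_{i1}-e^{-\i(\alpha+\i\beta)L}\delta_{ik})/s_k$ on its left boundary. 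This is the step that produces the phase factors $e^{\pm\i(\alpha+\i\beta)L}$ appearing in the last two terms of \eqref{eq: quasiperiodic capacitance formula}.

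Next, I compute the weight $\ell_i/\int_{D_i} e^{\gamma x}\d x$ in closed form: since $\int_{D_i} e^{\gamma x}\d x = \frac{1}{\gamma}(e^{\gamma x_i^{\iR}}-e^{\gamma x_i^{\iL}})=\frac{e^{\gamma x_i^{\iL}}}{\gamma}(e^{\gamma \ell_i}-1)$, the prefactor simplifies to $\gamma \ell_i/\bigl(e^{\gamma x_i^{\iL}}(e^{\gamma\ell_i}-1)\bigr)$. Substituting the explicit slopes computed above into \eqref{eq: cap}, four cases must be treated separately: (i) $j=i$, which collects both the slopes entering and leaving $D_i$ and yields the diagonal contribution; (ii) $j=i+1$ and (iii) $j=i-1$ (for $i,j$ in the interior of the unit cell), each picking up exactly one slope; and (iv) the corner pairs $(i,j)=(k,1)$ and $(i,j)=(1,k)$ where the quasiperiodic phase enters.

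The algebraic simplification uses the relations $e^{\gamma x_j^{\iR}}=e^{\gamma x_j^{\iL}}e^{\gamma \ell_j}$ and the identity $\frac{e^{\gamma\ell}}{e^{\gamma \ell}-1}=\frac{1}{1-e^{-\gamma \ell}}=-\frac{1}{e^{-\gamma\ell}-1}$, which converts the ratios arising from the integral into the stated rational functions $1/(1-e^{\pm \gamma \ell})$. Collecting these simplifications in each of the four cases and reading off the coefficients of $\delta_{ij}$, $\delta_{i(j-1)}$, $\delta_{i(j+1)}$, $\delta_{ik}\delta_{j1}$ and $\delta_{i1}\delta_{jk}$ produces exactly the right-hand side of \eqref{eq: quasiperiodic capacitance formula}.

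The work is elementary and consists almost entirely of careful bookkeeping; the main pitfall is the handling of the cell-boundary, where one must correctly identify the spacing $s_k$ on each side of the unit cell, attach the proper quasiperiodic phase $e^{\pm \i(\alpha+\i\beta)L}$, and avoid double-counting the contributions at $j=1$ and $j=k$ when separating the corner terms from the generic off-diagonal terms. Once the quasiperiodic extension is set up correctly, the remainder reduces to elementary algebra with exponentials.
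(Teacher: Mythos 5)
Your proposal is correct and follows essentially the same route as the paper: solve \eqref{eq: chain equation} explicitly as a piecewise affine function (with the quasiperiodic phase factors attached in the neighbouring cells), then evaluate \eqref{eq: cap} directly using $\int_{D_i}e^{\gamma x}\dx x=\tfrac{e^{\gamma x_i^{\iL}}}{\gamma}(e^{\gamma\ell_i}-1)$ and the identity $\tfrac{e^{\gamma\ell}}{e^{\gamma\ell}-1}=\tfrac{1}{1-e^{-\gamma\ell}}$. The paper's proof is simply a terser version of the same bookkeeping, so there is nothing to add.
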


\begin{proof}
     The solutions to $(\ref{eq: chain equation})$ for $2 \leq i \leq k-1$ are given by
    \begin{equation}
        V^{\alpha, \beta}_i(x) = \begin{cases}
            \frac{1}{s_{i-1}}(x-x_i^\iL), & x_{i-1}^\iR \leq x \leq x^\iL_i, \\
            1, & x_i^\iL \leq x \leq x_i^\iR,\\
            -\frac{1}{s_i}(x-x^\iL_{i+1}), & x_i^\iR \leq x \leq x_{i+1}^\iL, \\
            0, & \text{else},
         \end{cases}
    \end{equation}
     while for the bigger and smaller $i$ we multiply by the corresponding $e^{\i (\alpha + \i\beta)mL}$-factor. 
     The assertion now follows from a direct evaluation of \eqref{eq: cap}.
\end{proof}
The capacitance matrix captures the behaviour of the spectral bands in the subwavelength limit $\delta \to 0$.
\begin{proposition}\cite[Proposition 2.5.]{debruijn2024complexbandstructuresubwavelength}\label{Prop:  Gauge Capacitance Thm}
    The $k$ subwavelength (complex) band functions $\alpha + \i \beta \mapsto \omega^{\alpha, \beta}$ satisfy as $\delta \to 0$,
    \begin{equation}\label{eq:omega}
        \omega_i^{\alpha,  \beta} = \sqrt{ \lambda_i^{\alpha , \beta}} + \mathcal{O}(\delta),
    \end{equation}
    where $(\lambda_i^{\alpha, \beta})_{1 \leq i \leq k}$ are the eigenvalues of the eigenvalue problem
    \begin{equation}
        \widehat{\mathcal{C}}^{\alpha, \beta} \vect{a}_i = \lambda_i^{\alpha, \beta}\vect{a}_i, \quad 1 \leq i \leq k.
    \end{equation}
    We select the $k$ values of $\pm\sqrt{\lambda_i^{\alpha, \beta}}$ that have non-negative real parts.
\end{proposition}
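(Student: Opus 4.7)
The plan is to carry out a matched asymptotic expansion in the high-contrast parameter $\delta\to 0$ and identify the leading-order system with the generalised capacitance eigenvalue problem.

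First, I would extract the correct scaling. Outside the resonators, the equation is $u''+(\omega/v_0)^2u=0$; the transmission conditions in \eqref{eq: wave equation skin effect} force the one-sided derivatives at $x_i^{\iL,\iR}$ \emph{inside} each resonator to be exactly $\delta$ times the gap-side derivatives. Consequently, to leading order the resonant mode must be (i) constant on each $D_i$, with value $u_i$ to be determined, and (ii) harmonic on $\R\setminus D$ with the complex Floquet-Bloch condition of \eqref{eq: wave equation skin effect}. Matching (i) and (ii) via the $C^0$ transmission conditions forces
\[
u(x) = \sum_{j=1}^k u_j V_j^{\alpha,\beta}(x) + O(\omega^2) \qquad \text{on } \R\setminus D,
\]
since $V_j^{\alpha,\beta}$ is precisely the piecewise-linear quasiperiodic interpolant equal to $\delta_{ij}$ on $D_i$.

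Next I would multiply the Helmholtz equation on $D_i$ by $e^{\gamma x}$, rewrite it as $(e^{\gamma x}u')'+\frac{\omega^2}{v_i^2}e^{\gamma x}u=0$ and integrate over $D_i$, obtaining
\[
e^{\gamma x_i^{\iR}}u'|_{\iL}(x_i^{\iR}) - e^{\gamma x_i^{\iL}}u'|_{\iR}(x_i^{\iL}) + \frac{\omega^2}{v_i^2}\int_{D_i}e^{\gamma x}u(x)\,dx = 0.
\]
The transmission conditions convert the two boundary terms into $\delta$ times the \emph{gap-side} derivatives of $u$ at $x_i^{\iL,\iR}$. Substituting the ansatz above, these gap-side derivatives are exactly $\sum_j u_j \frac{dV_j^{\alpha,\beta}}{dx}|_{\iL,\iR}(x_i^{\iL,\iR})$, and a direct comparison with \eqref{eq: cap} identifies the resulting bracketed expression, up to the prefactor $-\ell_i/\int_{D_i}e^{\gamma x}dx$, as $\sum_j \widehat{C}^{\alpha,\beta}_{ij} u_j$. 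Replacing $u$ by $u_i$ inside the remaining integral on $D_i$ costs only $O(\delta)$, so after dividing by $\ell_i^{-1}\int_{D_i}e^{\gamma x}dx$ we arrive at the reduced eigenvalue equation
\[
\omega^2 u_i = \sum_{j=1}^k \frac{\delta v_i^2}{\ell_i}\widehat{C}^{\alpha,\beta}_{ij}u_j + O(\delta^2) = \sum_{j=1}^k \widehat{\mathcal{C}}^{\alpha,\beta}_{ij}u_j + O(\delta^2).
\]
Since $\widehat{\mathcal{C}}^{\alpha,\beta}=O(\delta)$, it follows that $\omega^2 = \lambda_i^{\alpha,\beta} + O(\delta^2)$ for some eigenvalue $\lambda_i^{\alpha,\beta}$ of $\widehat{\mathcal{C}}^{\alpha,\beta}$, and taking the square root with non-negative real part yields \eqref{eq:omega}.

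The delicate point is the rigorous control of the $O(\delta)$ corrections uniformly in the complex quasimomentum $(\alpha,\beta)$, together with the guarantee that every subwavelength resonance (and not some spurious mode) is captured by the expansion. The cleanest route, following \cite{FoundationsSkinEffect}, is to recast the full boundary value problem as a Dirichlet-to-Neumann system on the endpoints $\{x_i^{\iL,\iR}\}$, observe that the associated characteristic operator is an analytic perturbation in $(\omega,\delta)$ of the capacitance problem, and apply a Gohberg-Sigal / generalised Rouché argument to track how the $k$ zero characteristic values at $\delta=0$ perturb into the $k$ subwavelength resonances $\omega_i^{\alpha,\beta}$.
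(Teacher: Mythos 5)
The paper does not prove this proposition at all: it is imported verbatim from \cite[Proposition 2.5]{debruijn2024complexbandstructuresubwavelength}, so there is no internal argument to measure your attempt against. On its own merits, your derivation follows the standard route used in that reference and in \cite{FoundationsSkinEffect} (constant leading-order profile on each $D_i$ from the $\delta$-scaled flux conditions, piecewise-linear quasiperiodic exterior profile $\sum_j u_j V_j^{\alpha,\beta}$, integration of $(e^{\gamma x}u')' + \tfrac{\omega^2}{v_i^2}e^{\gamma x}u = 0$ over $D_i$, identification with the capacitance matrix), and it is correct in outline. Two remarks. First, the identification of the boundary terms with $\sum_j \widehat{C}^{\alpha,\beta}_{ij}u_j$ is not quite ``direct'': integrating over $D_i$ produces the derivatives of $V_j^{\alpha,\beta}$ at the endpoints of $D_i$, i.e.\ the bracketed quantity in \eqref{eq: cap} for the entry $\widehat{C}^{\alpha,\beta}_{ji}$, whose prefactor $-\ell_j/\int_{D_j}e^{\gamma x}\dx x$ depends on $j$ rather than $i$. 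What you actually obtain is a generalised eigenvalue problem whose matrix is a positive diagonal conjugate of the transpose of $\widehat{\mathcal{C}}^{\alpha,\beta}$; this has the same eigenvalues as $\widehat{\mathcal{C}}^{\alpha,\beta}$, which is all the proposition asserts, but the step deserves an explicit line rather than ``a direct comparison''. Second, as you yourself flag, the formal matching gives neither the $\mathcal{O}(\delta)$ error bound nor the guarantee that exactly $k$ subwavelength resonances arise and none are missed; deferring this to a Dirichlet-to-Neumann reformulation plus a Gohberg--Sigal/generalised Rouch\'e count is indeed how the cited sources close the argument, so the sketch is honest about where the remaining analytic work lives, but as written it is a derivation of the leading-order characteristic equation rather than a complete proof.
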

Comparing with Section \ref{sec: tridiagonal k toeplitz}, we are interested in the Toeplitz matrix generated by the symbol given by the capacitance matrix $f(z) = f(e^{-\i(\alpha+\i \beta)}) = \widehat{\mathcal{C}}^{\alpha, \beta}$. In the subsequent sections, we will work with complex band functions defined in terms of the frequency $\omega \geq 0$ rather than the eigenvalue $\lambda=\omega^2$.

\begin{figure}[ht!]
    \centering
    \includegraphics[width=0.65\linewidth]{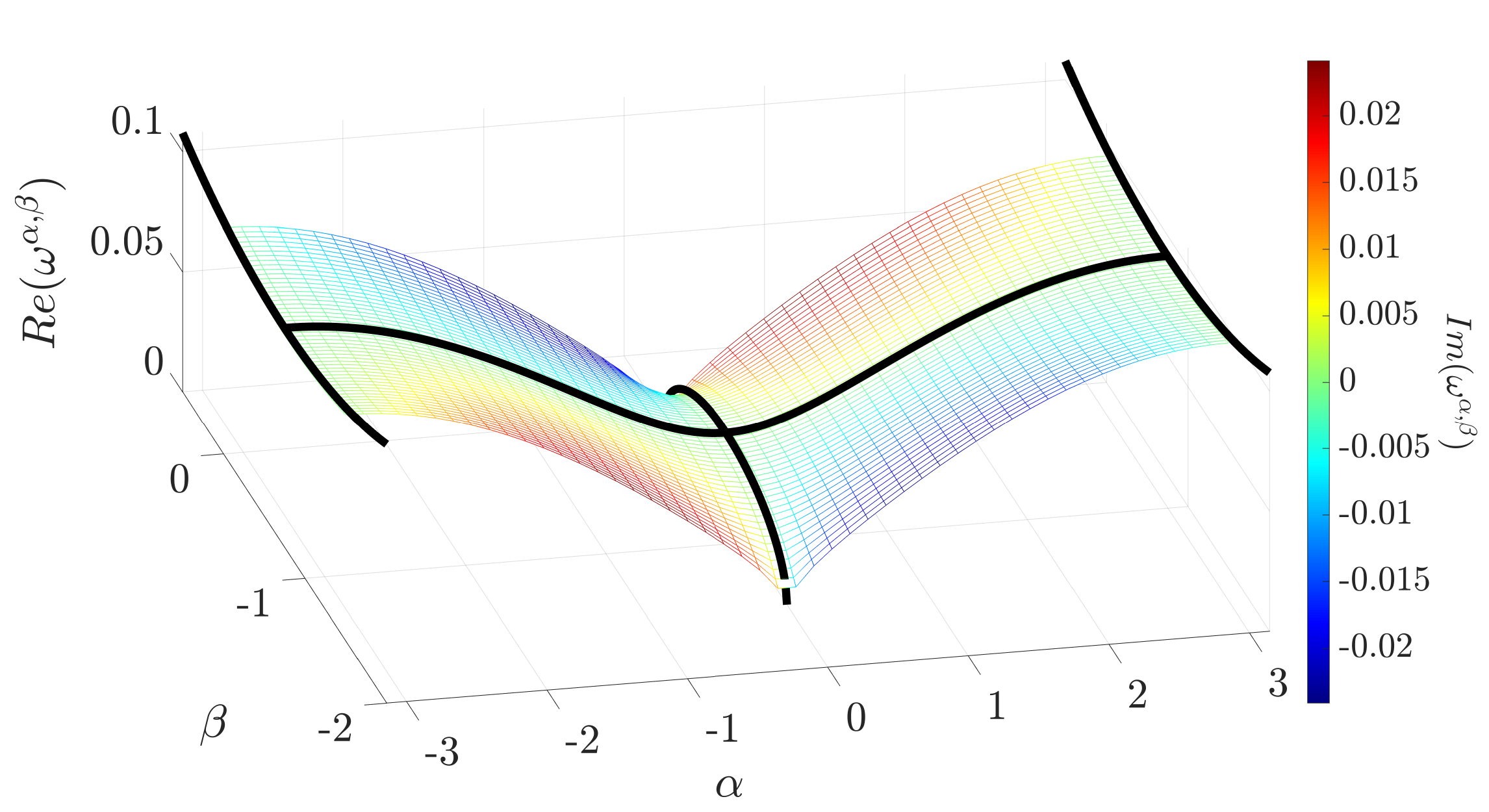}
    \caption{The band function $\omega^{\alpha, \beta}$ as given by Proposition \ref{Prop:  Gauge Capacitance Thm} is plotted as a function of $(\alpha, \beta)$. The colouring in the surface plot illustrates the imaginary part of the band functions $\Im(\omega^{\alpha, \beta})$. The black lines illustrate the surface plot where $\Im(\omega^{\alpha, \beta}) = 0$, which represent admissible resonances to our scattering problem \eqref{eq: wave equation skin effect}. Following Theorem \ref{Thm: alpha and beta fixed} it holds for either $\alpha \in \{0,  \pi\}$ or $\beta = \frac{\gamma}{2}\sum_{i=1}^k \ell_i$.}
    \label{fig: Monomer Band function surface}
\end{figure}

In Figure \ref{fig: Monomer Band function surface}, we illustrate the band function $\omega^{\alpha, \beta}$ in the case of a single repeated subwavelength resonator. Following Theorem \ref{Thm: alpha and beta fixed}, the admissible bands are found at $\alpha \in \{0,  \pi/L\}$ or $\beta = r$. The rate $r$ of non-reciprocity, corresponding to the decay length for frequencies inside the Floquet spectrum $\sigma_\mathrm{open}$, is in this case given by \eqref{eq: non reciproctity rate} and is equal to
\begin{equation}\label{eq: non-reciprocity rate quasiperiodic capacitance}
        r = \frac{1}{2}\log\left( \prod_{i =1}^k \frac{b_i}{c_i} \right)
        =\frac{1}{2}\log\left( \prod_{i =1}^k \frac{\frac{-\gamma}{s_j}\frac{\ell_j}{1-e^{-\gamma \ell_i}}}{
        \frac{\gamma}{s_j}\frac{\ell_j}{1-e^{\gamma \ell_i}}} \right)
        =  \frac{\gamma}{2}\sum_{i=1}^k \ell_i.
\end{equation}

According to Definition \ref{def: reduced band functions}, we distinguish between two types of complex band functions. A band function is the spectral band given by $\omega(\alpha, \beta)$, for fixed ${\beta} = \frac{\gamma}{2}\sum_{i=1}^k \ell_i$. A gap function is given by $\omega({\alpha}, \beta)$ for fixed ${\alpha}\in\{0, \pi/L\}$. In either case, the complex band functions are real-valued solutions to \eqref{eq:omega}.

Two examples of the complex band structure for a non-Hermitian problem are illustrated in Figure \ref{Fig:Gauge_band}. Figure \ref{Fig:Gauge_band} (A) shows the same setup as Figure \ref{fig: Monomer Band function surface} but restricted to the admissible (real-valued) bands. Figure \ref{Fig:Gauge_band} (B) shows the complex band structure for a system of repeated \emph{dimers}, that is, a system with two resonators within the unit cell. Here, we observe a spectral gap of $\sigma_\mathrm{open}$, although $\sigma_\mathrm{wind}$ is not gapped.

Note that the previous definition generalises the band and gap functions to the non-Hermitian case as for Hermitian systems $\gamma = 0$ and thus we recover the definition in the Hermitian case \cite[Definition 2.1]{debruijn2024complexbandstructuresubwavelength}. We also highlight that, because $\beta$ remains constant throughout the Floquet spectrum $\sigma_\mathrm{open}$, the decay lengths of the eigenmodes are uniform across all frequencies within this range.

\begin{figure}[htb]
    \centering
    \subfloat[][Monomer resonator chain with $s_1 = \ell_1 = 0.5$.]%
    {\includegraphics[width=0.45\linewidth]{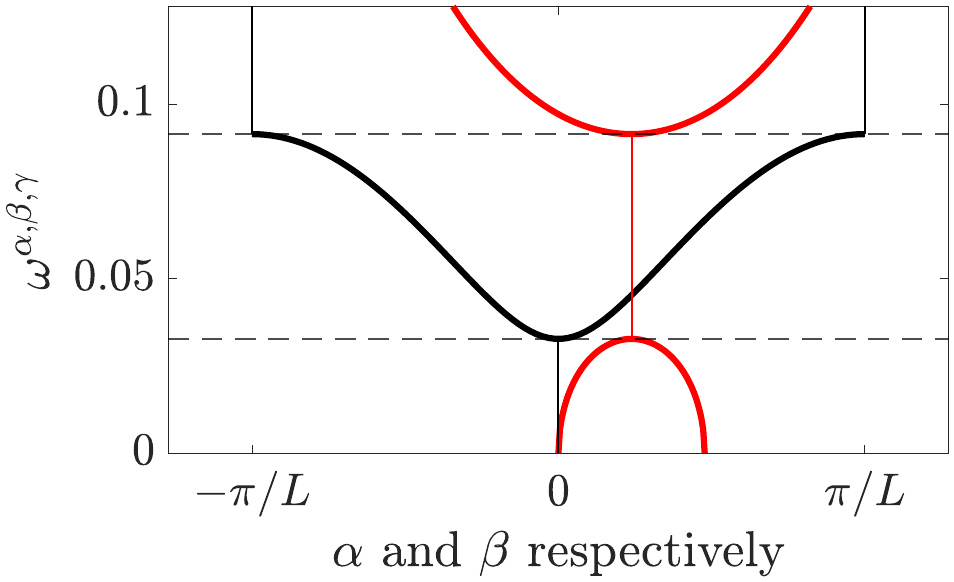}}\quad
    \subfloat[][Dimer resonator chain with $\ell_1 = \ell_2 = 0.25, s_1 = 1,$ and $s_2 = 2$.]%
    {\includegraphics[width=0.45\linewidth]{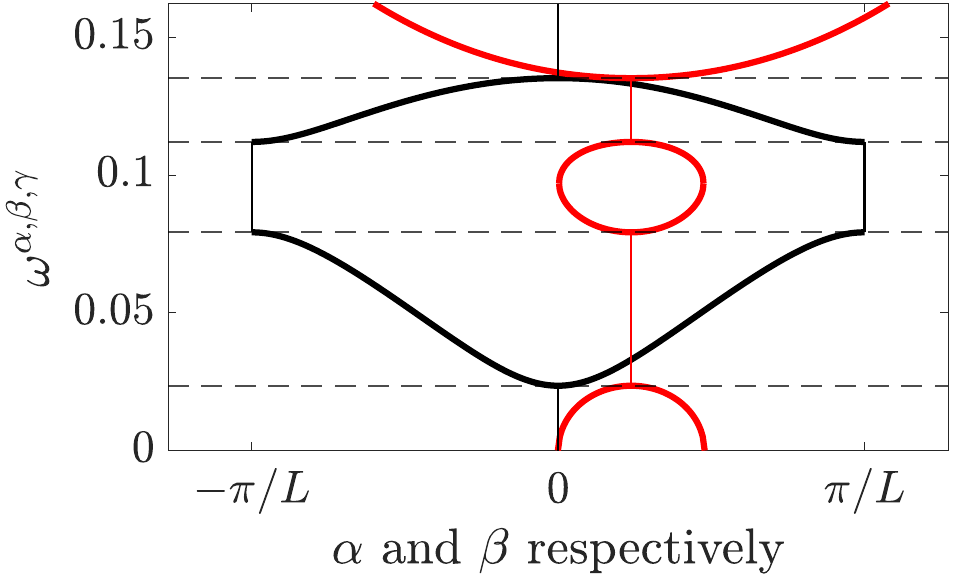}}
    \caption{The band and gap functions plotted as characterised by Proposition \ref{Prop:  Gauge Capacitance Thm}. Figure (A) is exactly the same plot as in Figure \ref{fig: Monomer Band function surface}, but with the admissible bands projected onto the same axis. Qualitatively, the non-Hermitian nature of the problem shifts the origin of $\beta$
    by the rate $r$ of non-reciprocity. Another difference to Hermitian systems is the emergence of a band gap below the first spectral gap. Computation performed for $\gamma = 3$ and $\delta = 10^{-3}$.}
    \label{Fig:Gauge_band}
\end{figure}

\subsection{Defected capacitance matrix}\label{sec:defect_inf}
We will now explore non-Hermitian systems in the presence of defects which break the translational invariance of the problem. Such defects typically create localised modes, in which case Corollary \ref{cor: decay rate outside spectrum} shows that the localisation length can be correctly predicted from the complex band structure. We begin by adapting the approach of  \cite{ErikAnderson2024} to show the existence of localised modes.

 We recall the definition of the complex Floquet transform for quasiperiodic operators (see \cite[Section 5.3]{debruijn2025complexbrillouinzonelocalised}),
 $\F: \ \bigr(\ell^2(\Lambda)\bigr)^k \to \left(L^2(Y)\right)^k$ and its inverse $\I: \ \left(L^2(Y)\right)^k\to ~\bigr(\ell^2(\Lambda)\bigr)^k $, which are given by
	$$\F[\phi](\alpha) := \sum_{\ell\in \Lambda} \phi(\ell)e^{\iu (\alpha+\iu\beta) \ell}, \qquad  \I[\psi](\ell) := \frac{1}{|Y^*|}\int_{Y^*} \psi(\alpha) e^{-\iu (\alpha +\i\beta) \ell} \d \alpha.$$
	Assuming the complex component of the quasimomentum remains constant,  both the Floquet and inverse Floquet transformations remain unaltered from the case of real quasimomentum. We can now define the real-space generalised capacitance coefficients, using the inverse Floquet transform, as
	$$\mathcal{C}^\ell = \I[\widehat{\mathcal{C}}^{\alpha, \beta}](\ell),$$	
	indexed by the real-space variable $\ell \in \Lambda$ and defined for fixed $\beta = r$, where $r$ is given by \eqref{eq: non-reciprocity rate quasiperiodic capacitance}. If $\uf \in \ell^2(\Lambda^k,\Cb^k)$, we define the Laurent operator $\Cf$ on $\ell^2(\Lambda^k,\Cb^k)$, corresponding to the symbol $\widehat{\mathcal{C}}^{\alpha, \beta}$, by
$$\Cf \uf (m) = \sum_{n\in \Lambda} \mathcal{C}^{m-n} \uf(n).$$
The eigenvalue problem reduces to
 \begin{equation}\label{eq:Cf_system}
      \Cf \uf = \omega^2 \uf,
 \end{equation}
and characterises the subwavelength resonances $\omega$ and their corresponding resonant modes $\uf$.

We will use a Green's function approach to describe localisation effects. For eigenfrequencies $\omega^2 \not\in \sigma(\Cf)$, we define the discrete Green's function relative to a point source located at index $j$ by
\begin{equation}
    (\Cf - \omega^2 \Id) \uf = \delta_j,
\end{equation}
where $\delta_j$ represents a source at the index $j$ and is given by the $j$-th standard basis vector. For $\omega^2 \not\in \sigma(\Cf)$ the matrix $(\Cf - \omega^2\Id)$ is invertible, so the Green's function is explicitly defined as,
\begin{equation}\label{eq: discrete Green function}
    \uf = (\Cf-\omega^2\Id)^{-1}\delta_j = \Bigl((\Cf -\omega^2\Id)^{-1}_{i j}\Bigr)_{1\leq i \leq N}.
\end{equation}

The inverses of tridiagonal $k$-Toeplitz matrices are explicit, and closed-form formulas can be found in \cite{InversekToeplitz}. Therefore, the entries of the Green's function are explicit and are determined by the $j$-th column vector of $(\Cf-\omega^2\Id)^{-1}$. As we will demonstrate in the following section, the localisation properties of the discrete Green's function are crucial in the study of defected resonator chains. 

Our main objective is to investigate configurations exhibiting exponentially localised bulk modes. This is achieved by introducing a defect which shifts a resonant frequency outside of the spectrum $\sigma_\mathrm{wind}$. The magnitude of this shift dictates the degree of localisation through the complex band structure. This setup is illustrated in Figure \ref{fig: Defected monomer chain}, where a defect is introduced in the wave speed of a single resonator. 

\begin{figure}[htb]
    \centering
    \begin{adjustbox}{width=\textwidth}
    \begin{tikzpicture}
        \coordinate (x1l) at (1,0);
        \path (x1l) +(1,0) coordinate (x1r);
        \path (x1r) +(0.5,0.7) coordinate (s1);
        \path (x1r) +(1,0) coordinate (x2l);
        \path (x2l) +(1,0) coordinate (x2r);
        \path (x2r) +(0.5,0.7) coordinate (s2);
        \path (x2r) +(1,0) coordinate (x3l);
        \path (x3l) +(1,0) coordinate (x3r);
        \path (x3r) +(.5,0.7) coordinate (s3);
        \path (x3r) +(2,0) coordinate (x4l);
        \path (x4l) +(1,0) coordinate (x4r);
        \path (x4r) +(0.5,0.7) coordinate (s4);
        \path (x4r) +(1,0) coordinate (dots);
        \path (dots) +(1,0) coordinate (x5l);
        \path (x5l) +(1,0) coordinate (x5r);
        \path (x5r) +(1.,0) coordinate (x6l);
        \path (x5r) +(0.5,0.7) coordinate (s5);
        \path (x6l) +(1,0) coordinate (x6r);
        \path (x6r) +(1.,0) coordinate (x7l);
        \path (x6r) +(0.5,0.7) coordinate (s6);
        \path (x7l) +(1,0) coordinate (x7r);
        \path (x7r) +(1.,0) coordinate (x8l);
        \path (x7r) +(0.5,0.7) coordinate (s7);
        \path (x8l) +(1,0) coordinate (x8r);
        \draw[ultra thick] (x1l) -- (x1r);
        \node[anchor=north] (label1) at (x1l) {$x_1^{\iL}$};
        \node[anchor=north] (label1) at (x1r) {$x_1^{\iR}$};
        \node[anchor=south] (label1) at ($(x1l)!0.5!(x1r)$) {$v$};
        \draw[dotted,|-|] ($(x1r)+(0,0.25)$) -- ($(x2l)+(0,0.25)$);
        \draw[ultra thick] (x2l) -- (x2r);
        \node[anchor=north] (label1) at (x2l) {$x_2^{\iL}$};
        \node[anchor=north] (label1) at (x2r) {$x_2^{\iR}$};
        \node[anchor=south] (label1) at ($(x2l)!0.5!(x2r)$) {$v$};
        \draw[dotted,|-|] ($(x2r)+(0,0.25)$) -- ($(x3l)+(0,0.25)$);
        \draw[ultra thick] (x3l) -- (x3r);
        \node[anchor=north] (label1) at (x3l) {$x_3^{\iL}$};
        \node[anchor=north] (label1) at (x3r) {$x_3^{\iR}$};
        \node[anchor=south] (label1) at ($(x3l)!0.5!(x3r)$) {$v$};
        \draw[dotted,|-|] ($(x3r)+(0,0.25)$) -- ($(x4l)+(-1,0.25)$);
        \node[anchor=south] (label1) at ($(x3r) + (1.5,-0.15)$) {$\dots$};
        \node (dots) at (dots) {\dots};
        \draw[red, line width=3pt] (x4l) -- (x4r);
        \node[anchor=north] (label1) at (x4l) {$x_m^{\iL}$};
        \node[anchor=north] (label1) at (x4r) {$x_m^{\iR}$};
        \node[anchor=south] (label1) at ($(x4l)!0.5!(x4r)$) {${\color{red} \Tilde{v}}$};
        \draw[dotted,|-|] ($(x4r)+(0,0.25)$) -- ($(dots)+(-.25,0.25)$);
        \draw[ultra thick] (x5l) -- (x5r);
        \node[anchor=north] (label1) at (x5l) {$x_{N-3}^{\iL}$};
        \node[anchor=north] (label1) at (x5r) {$x_{N-3}^{\iR}$};
        \node[anchor=south] (label1) at ($(x5l)!0.5!(x5r)$) {$v$};
        \draw[dotted,|-|] ($(x5r)+(0,0.25)$) -- ($(x6l)+(0,0.25)$);
        \draw[ultra thick] (x6l) -- (x6r);
        \node[anchor=north] (label1) at (x6l) {$x_{N-2}^{\iL}$};
        \node[anchor=north] (label1) at (x6r) {$x_{N-2}^{\iR}$};
        \node[anchor=south] (label1) at ($(x6l)!0.5!(x6r)$) {$v$};
        \draw[dotted,|-|] ($(x6r)+(0,0.25)$) -- ($(x7l)+(0,0.25)$);
        \draw[ultra thick] (x7l) -- (x7r);
        \node[anchor=north] (label1) at (x7l) {$x_{N-1}^{\iL}$};
        \node[anchor=north] (label1) at (x7r) {$x_{N-1}^{\iR}$};
        \node[anchor=south] (label1) at ($(x7l)!0.5!(x7r)$) {$v$};
        \draw[dotted,|-|] ($(x7r)+(0,0.25)$) -- ($(x8l)+(0,0.25)$);
        \draw[ultra thick] (x8l) -- (x8r);
        \node[anchor=north] (label1) at (x8l) {$x_{N}^{\iL}$};
        \node[anchor=north] (label1) at (x8r) {$x_{N}^{\iR}$};
        \node[anchor=south] (label1) at ($(x8l)!0.5!(x8r)$) {$ v $};
        \node[anchor=north] (label1) at (s1) {$s$};
        \node[anchor=north] (label1) at (s2) {$s$};
        \node[anchor=north] (label1) at (s3) {$s$};
        \node[anchor=north] (label1) at (s4) {$s$};
        \node[anchor=north] (label1) at (s5) {$s$};
        \node[anchor=north] (label1) at (s6) {$s$};
        \node[anchor=north] (label1) at (s7) {$s$};
    \end{tikzpicture}
    \end{adjustbox}
    \caption{A defected monomer chain of $N$ one-dimensional subwavelength resonators, with length $\ell$ and spacing $s$. The wave speed inside the resonators is $v = 1$ whereas in the defected resonator the speed is $\tilde{v} = 1 + \eta.$}
    \label{fig: Defected monomer chain}
\end{figure}
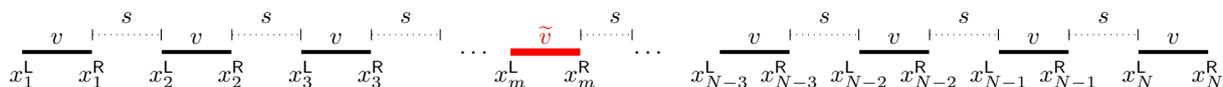
Following \cite{ErikAnderson2024}, defective materials are characterised by the eigenvalue problem
 \begin{equation}\label{eq: defected eigenvalue problem}
     \Bf\Cf\uf = \omega^2 \uf,
 \end{equation}
 where $\Bf$ is diagonal and a compact perturbation of the identity operator. In the case of a single defect at the position $m$ in the chain, $\Bf$ is a doubly-infinite matrix given by
\begin{equation}\label{eq: defect matrix changed wavespeed}
    (\Bf)_{ii} = \begin{cases}
        1, & i \neq m, \\
        1 + \eta, & i = m,
    \end{cases}
\end{equation}
for some constant $\eta \in (-1, \infty)$. Localised modes are associated to eigenfrequencies $\omega^2 \in \sigma(\Bf\Cf)$ such that $\omega^2 \not\in \sigma(\Cf)$.
 
\begin{theorem}\label{thm: green decay equal defect decay}
    Consider a defected structure with a defect in the $m$-th resonator.
    Let $\omega^2 \in \sigma(\Bf\Cf)$ but $\omega^2 \not\in \sigma(\Cf)$, then the discrete Green's function 
    \begin{equation}
        \uf = (\Cf - \omega^2 \Id)^{-1}\delta_m
    \end{equation}
    is an eigenmode of \eqref{eq: defected eigenvalue problem}.
\end{theorem}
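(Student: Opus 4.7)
The plan is to reformulate the defected eigenvalue problem in Lippmann--Schwinger style. First I would decompose the defect operator as $\Bf = \Id + \eta\, E_m$, where $E_m$ is the rank-one operator acting by $E_m \v = \v^{(m)} \delta_m$. Given the hypothesis $\omega^2 \in \sigma(\Bf\Cf)$, I would pick a non-trivial eigenvector $\v$ satisfying $\Bf\Cf\v = \omega^2\v$. Substituting the decomposition of $\Bf$ and rearranging should give
\begin{equation*}
(\Cf - \omega^2 \Id)\v = -\eta\, (\Cf\v)^{(m)}\, \delta_m,
\end{equation*}
which reduces the eigen-equation to inverting $\Cf - \omega^2\Id$ against a point source at the defect site.

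Next, since $\omega^2 \notin \sigma(\Cf)$ by hypothesis, the operator $\Cf - \omega^2 \Id$ is invertible on $\ell^2(\Lambda^k, \Cb^k)$, so the discrete Green's function $\uf = (\Cf-\omega^2\Id)^{-1}\delta_m$ from \eqref{eq: discrete Green function} is well-defined. Applying $(\Cf-\omega^2\Id)^{-1}$ to the identity above yields
\begin{equation*}
\v = -\eta\, (\Cf\v)^{(m)}\, \uf,
\end{equation*}
so $\v$ is a scalar multiple of $\uf$. A short non-degeneracy check is needed: if the scalar $-\eta (\Cf\v)^{(m)}$ vanished, the displayed identity would force $\v = 0$, contradicting that $\v$ is a non-trivial eigenvector. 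Hence $\uf \neq 0$ and $\v = c\, \uf$ for some $c\neq 0$. Substituting this back into $\Bf\Cf\v = \omega^2\v$ and dividing by $c$ then gives $\Bf\Cf\uf = \omega^2 \uf$, which is the desired conclusion.

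The only step demanding care is the bookkeeping around $\Bf$, namely the identity $(\Bf - \Id)\Cf\v = \eta (\Cf\v)^{(m)}\delta_m$, which follows directly from the diagonal structure of $\Bf$ in \eqref{eq: defect matrix changed wavespeed}. I do not anticipate any deeper difficulty; once the decomposition $\Bf = \Id + \eta E_m$ is in place, the entire argument should occupy only a few lines.
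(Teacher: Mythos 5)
Your argument is correct and follows essentially the same route as the paper: both reduce the defected eigenvalue problem to a point source at the defect site via the rank-one structure of $\Bf$ and then invert $\Cf-\omega^2\Id$, the only cosmetic difference being that you decompose $\Bf=\Id+\eta E_m$ directly while the paper works with $\Bf^{-1}=\Id-\tfrac{\eta}{1+\eta}E_m$ (the resulting scalar prefactors agree, since $(\Cf\v)^{(m)}=\tfrac{\omega^2\v^{(m)}}{1+\eta}$). Your explicit non-degeneracy check on the scalar is a small addition the paper leaves implicit.
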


\begin{proof}
Note that, since $\Bf$ is diagonal, its inverse is given by
\begin{equation}
    (\Bf)_{ii}^{-1} = \begin{cases}
        1, & i \neq m, \\
        \frac{1}{1 + \eta}, & i = m,
    \end{cases}
\end{equation}
and is well-defined for $\eta > -1$. Note that 
\begin{equation}
    \Bf^{-1} = \Id -\frac{\eta}{1 + \eta}E_{m}, \quad (E_m)_{ij} = \begin{cases}
        1, &i= j=m,\\
        0, &\text{else}.
    \end{cases}
\end{equation}
    A direct computation yields,
    \begin{align}
        \Bf\Cf \uf = \omega^2 \uf
        \Leftrightarrow C \uf = \omega^2 \Bf^{-1}\uf
       \Leftrightarrow \Cf \uf - \omega^2\Id \uf= -\frac{w^2\eta}{1 + \eta}E_{m}
       \Leftrightarrow \uf = -\frac{w^2\eta}{1 + \eta}\bigl(\Cf-\omega^2\Id\bigr)^{-1}\delta_m,
    \end{align}
    which concludes the proof.
\end{proof}

In other words, to study the eigenmodes associated to defect eigenfrequencies, it is enough to study the discrete Green's function at a given defect eigenfrequency $\omega$. Another benefit of this equivalence is that, while $\Bf\Cf$ is not a $k$-Toeplitz operator, we have that $\bigl(\Cf-\omega^2\Id\bigr)$ is a pristine tridiagonal $k$-Toeplitz operator. For $\omega^2 \notin \sigma(\Cf)$, we define
	$$ \mathcal{T}(\omega) =  \left(\begin{smallmatrix} T^0 & T^1 & T^2 & \cdots & T^M   \\
		T^{-1} & T^0 & T^{1} & \cdots & T^{M-1}   \\
		T^{-2} & T^{-1} & T^0 & \cdots & T^{M-2}  \\
		\vdots & \vdots  & \vdots & \ddots & \vdots  \\
		T^{-M} & T^{-(M-1)} & T^{-(M-2)} & \cdots & T^0 \\ \end{smallmatrix}\right), \qquad T^m = -\frac{1}{|Y^*|}\int_{Y^*} e^{\iu \alpha m}\widehat{\mathcal{C}}^{\alpha, \beta}\left(\widehat{\mathcal{C}}^{\alpha, \beta} - \omega^2\Id\right)^{-1}\dx \alpha.$$
The defect eigenfrequencies are characterised by the following result.

 \begin{proposition}\label{prop: defect eigenfrequency}
     Let $\omega_0^2 \not\in\sigma(\Cf)$ and  $\omega_0^2 \in \sigma(\Bf\Cf)$, then $\omega_0$ is an eigenvalue of \eqref{eq: defected eigenvalue problem} if and only if
     \begin{equation}\label{eq: defect eigenfrequency infinit sysmtem}
         \operatorname{det}\bigl(\Id - \mathcal{H}\mathcal{T}(\omega_0)\bigr) = 0,
     \end{equation}
     where $\mathcal{H}$ is the block-diagonal matrix with entries $H_m$.
 \end{proposition}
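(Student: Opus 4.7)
\emph{Proof plan.} The strategy is a Lippmann--Schwinger reduction: I would convert the defected eigenvalue equation into a fixed-point equation using the resolvent $(\Cf - \omega_0^2\Id)^{-1}$, and then use the compact support of the perturbation $\mathcal{H} := \Bf - \Id$ to collapse it to a finite-dimensional determinantal condition. The hypothesis $\omega_0^2\notin\sigma(\Cf)$ ensures invertibility of the resolvent, while $\mathcal{H}$ being block-diagonal and compactly supported (since $\Bf$ is a compact perturbation of the identity) is what makes the reduction to a finite matrix possible.

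Starting from $\Bf\Cf\uf = \omega_0^2\uf$, one rearranges to $(\Cf - \omega_0^2\Id)\uf = -\mathcal{H}\Cf\uf$ and inverts the pristine operator to obtain
\begin{equation*}
\uf = -(\Cf - \omega_0^2\Id)^{-1}\mathcal{H}\Cf\uf.
\end{equation*}
Setting $\vect{v} := \mathcal{H}\Cf\uf$, which is automatically supported only where $\mathcal{H}$ is nonzero, and applying $\mathcal{H}\Cf$ to both sides yields the closed finite-dimensional system
\begin{equation*}
\bigl(\Id + \mathcal{H}\Cf(\Cf - \omega_0^2\Id)^{-1}\bigr)\vect{v} = 0.
\end{equation*}
The equivalence between nontrivial eigenmodes $\uf$ and nontrivial solutions $\vect{v}$ is direct: given such a $\vect{v}$, define $\uf := -(\Cf - \omega_0^2\Id)^{-1}\vect{v}$ and verify by substitution that $\Bf\Cf\uf = \omega_0^2\uf$, using injectivity of the resolvent to get $\uf\neq 0$; conversely, $\vect{v} = 0$ for an eigenmode would force $(\Cf - \omega_0^2\Id)\uf = 0$, contradicting $\omega_0^2\notin\sigma(\Cf)$.

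The final step is to identify the matrix $\Id + \mathcal{H}\Cf(\Cf - \omega_0^2\Id)^{-1}$ restricted to the defect support with $\Id - \mathcal{H}\mathcal{T}(\omega_0)$. The Laurent operator $-\Cf(\Cf - \omega_0^2\Id)^{-1}$ has symbol $-\widehat{\mathcal{C}}^{\alpha,\beta}\bigl(\widehat{\mathcal{C}}^{\alpha,\beta} - \omega_0^2\Id\bigr)^{-1}$, whose inverse Floquet coefficients are precisely the blocks $T^m$ in the statement. Restricting this Laurent operator to the defect support yields the block-Toeplitz matrix $\mathcal{T}(\omega_0)$, and writing $\mathcal{H}$ as the block-diagonal matrix with entries $H_m$, the finite system becomes $(\Id - \mathcal{H}\mathcal{T}(\omega_0))\vect{v} = 0$, whence the determinantal condition follows. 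The main technical hurdle is this last identification, where one must carefully track the sign convention and the fixed imaginary part $\beta$ of the complex quasimomentum when performing the inverse Floquet transform; the Lippmann--Schwinger manipulation itself is standard once the finite-rank nature of $\mathcal{H}$ is exploited.
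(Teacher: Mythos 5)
Your proposal is correct and follows essentially the intended argument: the paper's own ``proof'' is a one-line deferral to \cite[Proposition 3.7]{ErikAnderson2024}, and that referenced proof is precisely the Lippmann--Schwinger reduction you describe, with the resolvent $(\Cf-\omega_0^2\Id)^{-1}$ collapsing the problem onto the support of $\mathcal{H}=\Bf-\Id$ and the blocks $T^m$ arising as the inverse Floquet coefficients of $-\widehat{\mathcal{C}}^{\alpha,\beta}\bigl(\widehat{\mathcal{C}}^{\alpha,\beta}-\omega_0^2\Id\bigr)^{-1}$. Your sign bookkeeping (the built-in minus in $T^m$ turning $\Id+\mathcal{H}\Cf(\Cf-\omega_0^2\Id)^{-1}$ into $\Id-\mathcal{H}\mathcal{T}(\omega_0)$) and the two-way equivalence via injectivity of the resolvent are both as in the cited source.
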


 \begin{proof}
     The proof is identical to the proof of \cite[Proposition 3.7.]{ErikAnderson2024} but with $\mathcal{C}^\alpha$ replaced by $\mathcal{C}^{\alpha, \beta}$.
 \end{proof}
 In the case of a monomer chain we find the equivalent equation
\begin{equation}\label{eq: monomer gap frequency}
    \frac{\eta}{|Y^*|}\int_{Y^\star}\frac{\lambda^{\alpha, r}_1}{\omega_0^2 - \lambda^{\alpha, r}_1 }\d \alpha = 1.
\end{equation}

In Hermitian systems, it has been observed that defect eigenfrequencies $\omega_0$ are present only when $\eta \geq 0$. A key distinction from non-reciprocal systems lies in the formation of a spectral gap beneath the first spectral band, as illustrated in Figure \ref{Fig:Gauge_band}, which supports a defect mode associated to $\eta < 0$.

\begin{lemma}\label{lemma: existence of def}
    For $\eta < 0$ the system supports a unique defect eigenfrequency in the spectral gap below the spectrum.
    Similarly for $\eta > 0$, there exists a unique defect eigenfrequency above the spectrum. In either case, the defect eigenfrequency is given by
    \begin{equation}
        \omega_0 = \sqrt{\frac{a (\eta+1)^2 + \operatorname{sgn}(\eta)\sqrt{(\eta+1)^2 \left(a^2 \eta^2+8 bc \eta + 4bc\right)}}{2 \eta+1}},
    \end{equation}
    where the coefficients $a,~b,$ and $c$ were defined in \eqref{def: capacitance coefficients}.
\end{lemma}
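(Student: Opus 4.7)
The plan is to convert the integral equation \eqref{eq: monomer gap frequency} into an explicit algebraic equation for $\omega_0^2$ via contour integration, then solve the resulting quadratic and invoke monotonicity for uniqueness.

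First, for a monomer chain ($k=1$) the symbol reduces to the scalar
\begin{equation*}
\lambda_1^{\alpha,r} = a + b\,e^{\i(\alpha+\i r)L} + c\,e^{-\i(\alpha+\i r)L},
\end{equation*}
with $a,b,c$ as in \eqref{def: capacitance coefficients}. The substitution $z = e^{\i\alpha L}$ sends \eqref{eq: monomer gap frequency} to a contour integral over the unit circle $\mathbb{T}$. After multiplying numerator and denominator by $z$, the denominator $z(\omega_0^2 - \lambda_1^{\alpha,r})$ becomes a quadratic in $z$ whose roots $z_\pm$ satisfy $z_+ z_- = 1$, which is a direct consequence of the identity $e^{2rL}=b/c$ coming from \eqref{eq: non-reciprocity rate quasiperiodic capacitance}. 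Hence, for any $\omega_0^2$ outside $\sigma_{\mathrm{open}}$, exactly one root lies strictly inside $\mathbb{T}$.

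Next I would apply the residue theorem. Writing $\lambda_1^{\alpha,r}/(\omega_0^2-\lambda_1^{\alpha,r}) = -1 + \omega_0^2/(\omega_0^2-\lambda_1^{\alpha,r})$ splits the integrand into a trivial residue at $z=0$ plus a residue at the unique interior root, yielding a closed-form expression for the integral purely in terms of $\omega_0^2$, $a$, $b$, $c$ and the discriminant of the quadratic. Substituting back into \eqref{eq: monomer gap frequency} and clearing denominators produces the quadratic
\begin{equation*}
(2\eta+1)\,\omega_0^4 - 2a(\eta+1)^2\,\omega_0^2 + (\eta+1)^2\bigl(a^2 - 4bc\bigr) = 0,
\end{equation*}
whose roots, by the quadratic formula, are precisely the two branches appearing in the statement.

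To match algebraic roots to the correct spectral gaps and establish uniqueness, I would analyse the function $I(\omega_0^2) := |Y^*|^{-1}\int_{Y^*}\lambda_1^{\alpha,r}/(\omega_0^2-\lambda_1^{\alpha,r})\,\d\alpha$. Differentiating under the integral sign shows that $I$ is strictly monotone on each connected component of $\R\setminus\sigma_{\mathrm{open}}$, vanishes as $\omega_0^2 \to \pm\infty$, and diverges at the adjacent band edge. This gives at most one solution of $\eta I(\omega_0^2) = 1$ in each gap and selects the $+$ root for $\eta>0$ (upper gap) and the $-$ root for $\eta<0$ (lower gap), explaining the $\operatorname{sgn}(\eta)$ in the statement. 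The main obstacle is the residue step: one must verify via the winding-region characterisation from Lemma \ref{lemma: inclusion spectrum winding region} that the interior root switches branches appropriately between the two gaps so that a single residue formula handles both uniformly. A secondary delicacy is that the lower gap only exists because of the non-Hermiticity, so one has to check that the $\eta<0$ branch degenerates consistently in the Hermitian limit $\gamma\to 0$.
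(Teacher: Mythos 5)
Your proposal is correct and follows essentially the same route as the paper: both reduce \eqref{eq: monomer gap frequency} to the closed form $\eta\bigl(\omega_0^2/\sqrt{(a-\omega_0^2)^2-4bc}-1\bigr)=1$ (you via the residue theorem, the paper via the table integrals recorded in Appendix \ref{appendic: integral evaluation}), then clear denominators and solve the resulting quadratic in $\omega_0^2$, using monotonicity of the integral on each spectral gap for uniqueness and for selecting the root via $\operatorname{sgn}(\eta)$. Your quadratic $(2\eta+1)\omega_0^4-2a(\eta+1)^2\omega_0^2+(\eta+1)^2(a^2-4bc)=0$ is exactly what the paper's condition yields, with discriminant $(\eta+1)^2(a^2\eta^2+8bc\eta+4bc)$, so the two arguments agree in substance.
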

\begin{proof}
    For a monomer chain, the integral \eqref{eq: monomer gap frequency} may be  explicitly computed. The quasiperiodic capacitance matrix \eqref{eq: quasiperiodic capacitance formula} is now given by the scalar
    \begin{equation}
        \lambda_1^{\alpha, r} =
        \widehat{\mathcal{C}}^{\alpha, r}= -2\sqrt{bc}\cos(\alpha) + a.
    \end{equation} 
    The integral \eqref{eq: monomer gap frequency} evaluates to 
    \begin{equation}\label{eq: integral function of frequency}
        I(\omega_0) = \frac{\eta}{|Y^*|}\int_{Y^\star}\frac{\lambda^{\alpha, r}_1}{\omega_0^2 - \lambda^{\alpha, r}_1 }\d \alpha
         = \eta  \left(\frac{\omega_0^2}{\sqrt{\left(a-\omega_0^2\right)^2-4bc}}-1\right),
    \end{equation}
    where a derivation can be found in  Appendix \ref{appendic: integral evaluation}.
    Since the function \eqref{eq: integral function of frequency} is monotonic within the spectral gaps, there exists a unique $\omega_0^*$ such that
    \begin{equation}\label{eq: defect eigenfrequency condition}
        I(\omega_0^*) = \eta.
    \end{equation}
    By solving \eqref{eq: defect eigenfrequency condition} for $\omega^*$, it follows that  \eqref{eq: monomer gap frequency} is satisfied for 
    \begin{equation}
        \omega_0 = \sqrt{\frac{a (\eta+1)^2 +\operatorname{sgn}(\eta)\sqrt{(\eta+1)^2 \left(a^2 \eta^2+8 bc \eta + 4bc\right)}}{2 \eta+1}},
    \end{equation}
    which completes the proof.
\end{proof}

\subsection{Finite resonator chains.}
Next, we consider finite chains of $N$ resonators. This can be thought of as truncated Toeplitz and Laurent operators, although edge effects will introduce perturbations of the corner entries. The associated finite capacitance matrix has been computed in \cite{FoundationsSkinEffect}.
\begin{definition}
    For $\gamma \in \mathbb{R}\setminus \{0\}$, we define the \emph{(finite) gauge capacitance matrix} $\mathcal{C}^\gamma \in \mathbb{R}^{N\times N}$ by

\begin{equation}\label{capactitance matrix definition}
    \mathcal{C}^\gamma_{i,j} = \begin{cases} \frac{\gamma}{s_1}\frac{\ell_1}{1-e^{-\gamma \ell_1}}, & i = j = 1, \\
    \frac{\gamma}{s_i}\frac{\ell_i}{1-e^{-\gamma \ell_i}} - \frac{\gamma}{s_{i-1}} \frac{\ell_i}{1-e^{\gamma \ell_i}}, & 1 < i = j < N, \\
    -\frac{\gamma}{s_i}\frac{\ell_i}{1-e^{-\gamma \ell_j}}, & 1 \leq i = j-1  \leq N-1, \\
    \frac{\gamma}{s_j}\frac{\ell_i}{1-e^{\gamma \ell_j}}, & 2 < i = j+1 \leq N, \\
    -\frac{\gamma}{s_{N-1}} \frac{\ell_N}{1-e^{\gamma \ell_N}}, & i = j = N,
    \end{cases}
\end{equation}
and all the other entries are zero.  Moreover,  because none of the diagonal and off-diagonal entries are zero, the gauge capacitance matrix is non-degenerate and irreducible.
\end{definition}
It is not hard to see from Definition \ref{capactitance matrix definition} that the gauge capacitance matrix is tridiagonal $k$-Toeplitz with perturbed diagonal corners. Since the perturbed corners consist of compact perturbations, the essential spectrum stays unaffected by them.

\subsubsection{1-periodic Toeplitz matrix}\label{Sec: Defected finite resonator chains}
We now consider the simpler case of $k=1$, in other words, that the Toeplitz matrix is $1$-periodic. In the context of subwavelength resonators, this corresponds to a \emph{monomer} chain with a single resonator inside the unit cell. In this case, explicit formulas of the Green's function are available, which by Theorem \ref{thm: green decay equal defect decay} will allow us to analytically verify the decay estimates achieved by the complex band structure. Defected materials exhibit defect modes; an instance of a defected resonator chain can be seen in Figure \ref{fig: Defected monomer chain}.

The defected capacitance for a system as illustrated in Figure \ref{fig: Defected monomer chain} is given by $B\mathcal{C}^\gamma$, where $B$ is the truncated version of $\Bf$ as defined in \eqref{eq: defect matrix changed wavespeed} and $\mathcal{C}^\gamma$ is defined in \eqref{capactitance matrix definition}.
The resonant frequencies of a finite system comprising a single defect is illustrated in Figure \ref{Fig: Defected monomer spectrum}, and we observe a single resonance frequency associated to the defect.

\begin{figure}[htb]
    \centering
    \subfloat[][$\gamma = 1, n = 50, s_1 = \ell_1 = 0.5$ and $\eta = 1.5$.]%
    {\includegraphics[width=0.45\linewidth]{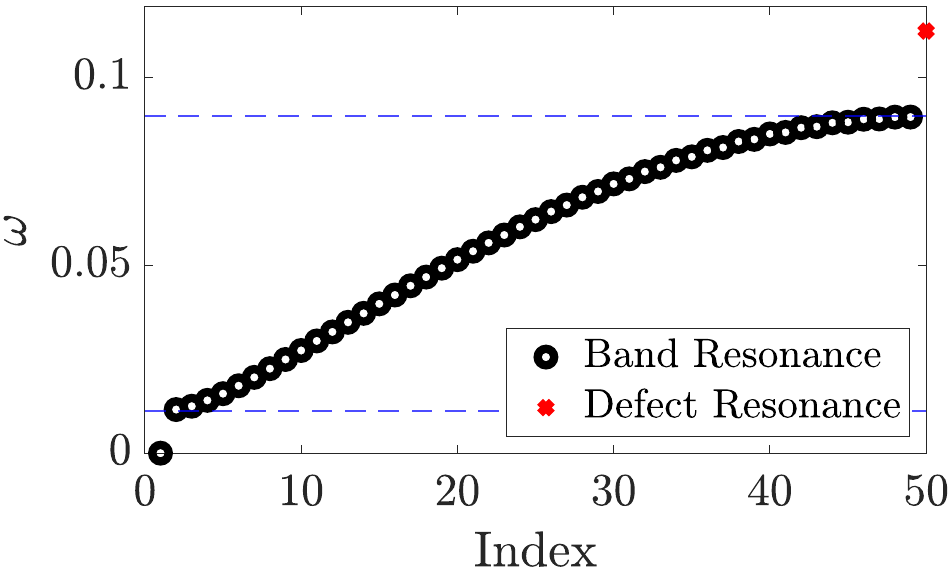}}\quad
    \subfloat[][$\gamma = 1, n = 50, s_1 = \ell_1 = 0.5$ and $\eta = -0.95$.]%
    {\includegraphics[width=0.45\linewidth]{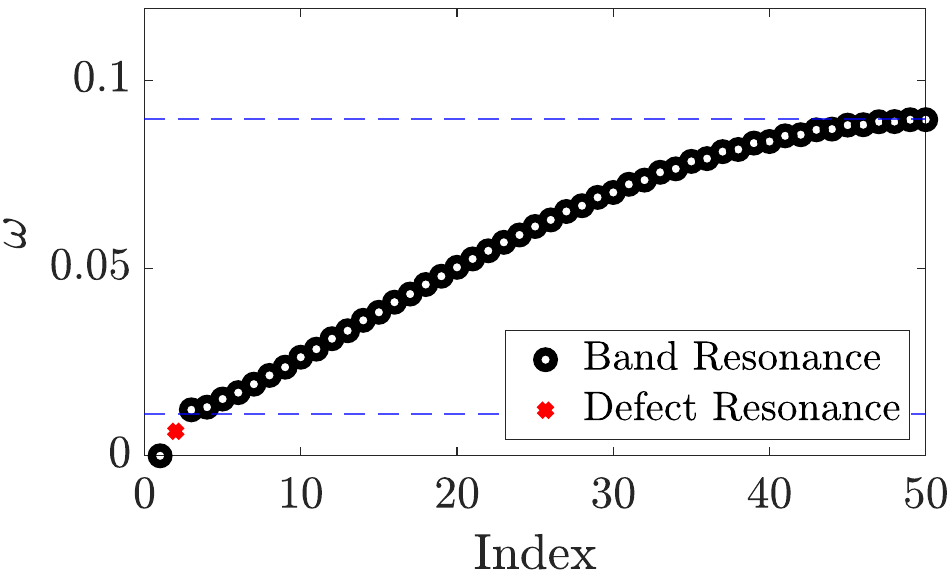}}
    \caption{Spectral plot of a defected structure. Depending on the type of defect, the system supports in accordance with Lemma \ref{lemma: existence of def} a defect eigenfrequency below or above the spectral bands. The dashed blue lines represent the limit of the spectrum given explicitly by \eqref{label: spectrum monomer}.}
    \label{Fig: Defected monomer spectrum}
\end{figure}

For a monomer chain the gauge capacitance matrix \eqref{capactitance matrix definition} is a truncated Toeplitz matrix, with symbol
\begin{equation}\label{eq: symbol for 1 toeplitz matrix}
    f(z) = bz^{-1}+  a + cz \in \C,
\end{equation}
where the entries are given by,
\begin{equation}\label{def: capacitance coefficients}
        a = \frac{\gamma}{s_1}\frac{l_1}{1 - e^{-\gamma l_1}} - \frac{\gamma}{s_1}\frac{l_1}{1 - e^{\gamma l_1}},~
        b = \frac{\gamma}{s_1}  \frac{l_1}{1 - e^{\gamma l_1}},~\text{ and }~
        c = -\frac{\gamma}{s_1}  \frac{l_1}{1 - e^{-\gamma l_1}}.
\end{equation}
For a monomer chain, the rate of non-reciprocity introduced in \eqref{eq: non-reciprocity rate quasiperiodic capacitance} is given by 
    \begin{equation}\label{eq: non reciprocty 1 toeplitz}
        r = \frac{1}{2}\log\left(\frac{b}{c}\right) = \frac{1}{2}\gamma\ell.
    \end{equation}
The next result characterises the open limit of the spectrum of a monomer chain.
\begin{lemma}\label{lemma: open limt monomer}
    It holds that $\omega^2 \not\in \sigma_{\mathrm{open}}$ if and only if $|d| > 1$, where $d := (a - \omega^2)/2\sqrt{bc}$.
\end{lemma}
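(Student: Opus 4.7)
The plan is to directly substitute the monomer symbol into the definition of $\sigma_{\mathrm{open}}$ and show that membership reduces to a simple cosine equation. Since $k=1$ and the symbol $f(z)=bz^{-1}+a+cz$ is scalar, for each $\alpha$ the spectrum $\sigma(f(e^{-\i(\alpha+\i r)}))$ is just the single value $f(e^{-\i(\alpha+\i r)})$. Hence $\omega^2 \in \sigma_{\mathrm{open}}$ if and only if there exists $\alpha \in Y^*$ with $\omega^2 = f(e^{-\i(\alpha+\i r)})$.

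Next, I would compute $f(e^{-\i(\alpha+\i r)})$ explicitly using $r=\tfrac{1}{2}\log(b/c)$ from \eqref{eq: non reciprocty 1 toeplitz}. Writing $z = e^{-\i\alpha}e^{r}$ and $z^{-1}=e^{\i\alpha}e^{-r}$, the off-diagonal part becomes
\begin{equation}
    bz^{-1}+cz = be^{-r}e^{\i\alpha}+ce^{r}e^{-\i\alpha}.
\end{equation}
The choice of $r$ is precisely the one that balances the coefficients: $be^{-r}=b\sqrt{c/b}=\sqrt{bc}$ and $ce^{r}=c\sqrt{b/c}=\sqrt{bc}$ (this is exactly the mechanism used in the proof of Theorem \ref{Thm: alpha and beta fixed} with $A=-c$, $B=-b$). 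Therefore
\begin{equation}
    f(e^{-\i(\alpha+\i r)}) = a + 2\sqrt{bc}\cos(\alpha).
\end{equation}

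Setting this equal to $\omega^2$ gives $\cos(\alpha)=(\omega^2-a)/(2\sqrt{bc})=-d$, so $\omega^2\in\sigma_{\mathrm{open}}$ precisely when the equation $\cos(\alpha)=-d$ admits a real solution $\alpha\in Y^*$. Since the range of cosine on $Y^*=(-\pi,\pi]$ is $[-1,1]$, such an $\alpha$ exists if and only if $|d|\leq 1$. Contrapositively, $\omega^2\notin\sigma_{\mathrm{open}}$ if and only if $|d|>1$, completing the proof.

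There is no real obstacle here; the only mild subtlety is confirming that, because $b_1 c_1 > 0$, the quantity $\sqrt{bc}$ is a well-defined positive real so that the identification of $r$ and the subsequent cosine identity go through without branch issues. This is guaranteed by the standing assumption $b_i c_i>0$ of the section.
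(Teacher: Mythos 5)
Your proof follows essentially the same route as the paper: evaluate the scalar symbol at $z=e^{-\i(\alpha+\i r)}$, observe that the choice of $r$ balances the two off-diagonal contributions into a single $\cos\alpha$ term, and read off that $\sigma_{\mathrm{open}}$ is the interval $[a-2\sqrt{bc},\,a+2\sqrt{bc}]$, which is exactly the condition $|d|\leq 1$. The only quibble is a harmless sign: for the gauge capacitance coefficients one has $b,c<0$ (so $A=c$, $B=b$ for $k=1$, not $-c,-b$), whence $be^{-r}=ce^{r}=-\sqrt{bc}$ and the band is $a-2\sqrt{bc}\cos\alpha$ as in \eqref{label: spectrum monomer}; since $\cos\alpha$ ranges symmetrically over $[-1,1]$, this does not affect the equivalence $|d|>1\iff\omega^2\notin\sigma_{\mathrm{open}}$.
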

\begin{proof}
    For a tridiagonal Toeplitz matrix the open limit of the spectrum is given by Theorem \ref{Thm: spectral limit CBZ} that is
    \begin{equation}\label{label: spectrum monomer}
        \bigcup_{\alpha \in Y^*} \sigma\bigl(f(e^{-\i(\alpha + \i r)})\bigr) = \bigcup_{\alpha \in Y^*} \bigl(-2\cos(\alpha)\sqrt{bc} + a\bigr) =  \bigl[-2\sqrt{bc} + a, 2\sqrt{bc} + a\bigr],
    \end{equation}
    where the symbol function $f(z)$ was defined in \eqref{eq: symbol for 1 toeplitz matrix} and $r$ in \eqref{eq: non reciprocty 1 toeplitz}.
    Then a direct computation yields $|d| > 1$ if and only if $\omega^2 \not \in \sigma_{\mathrm{open}}$, which concludes the proof.
\end{proof}

Additionally, from  \cite[Section 3]{ammari2024generalisedbrillouinzonenonreciprocal}, the winding region for a one-dimensional monomer chain is explicitly given by
\begin{equation}\label{eq: monomer winding region}
    \sigma_\text{wind} = \bigl(a-b-c, a + b + c\bigr).
\end{equation}

For the case of 1-periodic Toeplitz matrices, the decay rates of Corollary \ref{cor: decay rate outside spectrum}  can alternatively be established by explicit inversion formulas. For convenience of the reader, we recall some fundamental properties of Chebychev polynomials of the second kind. 
\begin{definition}
    The \emph{Chebychev polynomials of the second kind} form an orthogonal polynomial sequence, defined by the recursion $U_0(x) = 1$, $U_1(x) = 2x$, and $U_{n+1}(x) = 2xU_n(x) - U_{n-1}(x)$.
\end{definition}
For $|x| > 1$, the Chebychev polynomials of second kind admit the following form
\begin{equation}
    U_n(x) = \frac{\sinh{\bigl((n+1)\arccosh{x}\bigr)}}{\sqrt{x^2-1}}
    = \frac{1}{2\sqrt{x^2 -1}} \Bigl( \bigl(x + \sqrt{x^2-1}\bigr)^{n +1} - \bigl(x + \sqrt{x^2-1}\bigr)^{-(n + 1)} \Bigr)
\end{equation}
To put it differently, the Chebychev polynomials of second kind exhibit exponential growth with respect to $n$ when $|x|>1$. To simplify notation, our focus will be on tridiagonal Toeplitz matrices, although analogous findings are applicable to the inverses of broader classes of tridiagonal $k$-Toeplitz matrices \cite{InversekToeplitz, DecayRateInverseTridiag}. In this section, we consistently use the positive root of the $\arccosh{\cdot}$ function.
\begin{theorem}\label{thm: inverse oftridiagonal toeplitz}
    Let $\vect{A}\in \R^{n \times n}$ be a tridiagonal Toeplitz matrix that is irreducible and nonsingular and let $d := a / (2 \sqrt{bc})$, then the inverse is given by
    \begin{equation}\label{eq: inverse of teplitz matrix}
        (\vect{A}^{-1})_{ij} = \begin{cases}
            (-1)^{i+j} \frac{b^{j - i}}{(\sqrt{bc})^{j-i + 1}}\frac{U_{i-1}(d)U_{n-j}(d)}{U_{n}(d)}, \text{ if } i \leq j,\\
            (-1)^{i + j} \frac{c^{i-j}}{(\sqrt{bc})^{i-j+1}}\frac{U_{j-1}(d)U_{n-i}(d)}{U_n(d)}, \text{ if } i > j.
        \end{cases}
    \end{equation}
\end{theorem}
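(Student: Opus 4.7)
My plan is to derive the formula directly from the cofactor expansion of $\vect{A}^{-1}$, exploiting the fact that every leading and trailing principal submatrix of a tridiagonal Toeplitz matrix is itself a tridiagonal Toeplitz matrix with the same entries $a$, $b$, $c$. Recall the classical identity
\begin{equation*}
    (\vect{A}^{-1})_{ij} = \frac{(-1)^{i+j}}{\det \vect{A}}\, M_{ji},
\end{equation*}
where $M_{ji}$ denotes the minor obtained by deleting row $j$ and column $i$. For the tridiagonal structure and $i\leq j$, after this deletion the remaining $(n-1)\times(n-1)$ matrix is block upper triangular, splitting into a leading $(i-1)\times(i-1)$ tridiagonal Toeplitz block, a middle $(j-i)\times(j-i)$ upper bidiagonal block whose diagonal consists entirely of the entries $b$, and a trailing $(n-j)\times(n-j)$ tridiagonal Toeplitz block. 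Thus $M_{ji} = b^{j-i}\,\det(\vect{A}_{1:i-1})\,\det(\vect{A}_{j+1:n})$. The case $i>j$ is analogous with the roles of $b$ and $c$ interchanged, giving a factor $c^{i-j}$.

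Next I would compute the determinant of a $k\times k$ tridiagonal Toeplitz block. Expanding along the first row gives the three-term recurrence $\Delta_k = a\Delta_{k-1} - bc\,\Delta_{k-2}$ with $\Delta_0=1$ and $\Delta_1=a$. Substituting the ansatz $\Delta_k = (\sqrt{bc})^k q_k$ and dividing by $(\sqrt{bc})^k$ collapses this to the Chebyshev recurrence $q_k = 2d\,q_{k-1} - q_{k-2}$ with $d = a/(2\sqrt{bc})$ and the correct initial data $q_0=1$, $q_1=2d$. Consequently $q_k=U_k(d)$, so $\Delta_k=(\sqrt{bc})^k U_k(d)$. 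In particular $\det\vect{A} = (\sqrt{bc})^n U_n(d)$, which is nonzero by assumption, and $\det(\vect{A}_{1:i-1})=(\sqrt{bc})^{i-1}U_{i-1}(d)$, $\det(\vect{A}_{j+1:n}) = (\sqrt{bc})^{n-j} U_{n-j}(d)$.

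Assembling these pieces for $i\leq j$ yields
\begin{equation*}
    (\vect{A}^{-1})_{ij} = (-1)^{i+j}\, \frac{b^{j-i}\,(\sqrt{bc})^{i-1+n-j}}{(\sqrt{bc})^n}\, \frac{U_{i-1}(d)U_{n-j}(d)}{U_n(d)} = (-1)^{i+j}\,\frac{b^{j-i}}{(\sqrt{bc})^{j-i+1}}\,\frac{U_{i-1}(d)U_{n-j}(d)}{U_n(d)},
\end{equation*}
as claimed; the analogous calculation with the subdiagonal product $c^{i-j}$ handles $i>j$.

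The main obstacle is justifying the block-triangular factorization of the minor cleanly, in particular checking that the middle block truly reduces to a diagonal of $b$'s (respectively $c$'s) and that no stray off-diagonal entries of the original matrix are orphaned across the boundaries of the three blocks. Once that combinatorial bookkeeping is done carefully, the Chebyshev identification and the final algebraic simplification are routine.
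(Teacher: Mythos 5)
The paper does not prove this statement at all: it is quoted as a known result, with the inversion formulas attributed to the cited literature on inverses of tridiagonal ($k$-)Toeplitz matrices. Your cofactor-plus-Chebyshev argument is the standard self-contained proof of that result, and it is correct. The one point you flag as the "main obstacle" does work out, but with a small correction of orientation: for $i\leq j$ the minor $M_{ji}$ (delete row $j$, column $i$) is block \emph{lower} triangular, not upper. Writing $B_{k\ell}=A_{r_k,c_\ell}$ with $r_k$ the surviving rows and $c_\ell$ the surviving columns, one checks that all blocks above the diagonal vanish, the leading and trailing diagonal blocks are the contiguous principal submatrices $\vect{A}_{1:i-1}$ and $\vect{A}_{j+1:n}$, and the middle $(j-i)\times(j-i)$ block is lower bidiagonal with every diagonal entry equal to $b$ (and $a$ on its subdiagonal), so its determinant is $b^{j-i}$ and no entries are orphaned. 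Since the determinant of a block triangular matrix is the product of the diagonal blocks regardless of upper/lower orientation, your factorisation $M_{ji}=b^{j-i}\det(\vect{A}_{1:i-1})\det(\vect{A}_{j+1:n})$ stands, the Chebyshev identification $\Delta_k=(\sqrt{bc})^k U_k(d)$ is correct (irreducibility gives $b,c\neq 0$ and nonsingularity gives $U_n(d)\neq 0$), and the final algebra reproduces \eqref{eq: inverse of teplitz matrix} exactly; the $i>j$ case follows by the same computation with $c^{i-j}$ from the subdiagonal.
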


By applying closed formulas for the inverses of Toeplitz matrices, we obtain the decay behaviour of the discrete Green's function \eqref{eq: discrete Green function}. By Theorem \ref{thm: green decay equal defect decay} allows an estimate on the defect eigenmode of a defected resonator chain.

\begin{corollary}\label{Cor: Green up to the defect}
    Let $d = (a - \omega^2)/2\sqrt{bc}$ and $\omega^2 \not\in \sigma_{\mathrm{open}}$. Leading up to the defect site, that is, for $i \leq j$, a defect eigenmode $\vect{u}$ is exponentially localised in $i$, so
    \begin{equation}
        |\vect{u}_i| = \mathcal{O}\bigl( e^{-i(r-\arccosh{d})}\bigl),
    \end{equation}
    with $r = \frac{1}{2}\log\left(\frac{b}{c}\right)$.
\end{corollary}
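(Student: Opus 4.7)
The plan is to combine Theorem \ref{thm: green decay equal defect decay}, which identifies a defect eigenmode with the discrete Green's function, with the explicit inversion formula of Theorem \ref{thm: inverse oftridiagonal toeplitz}. In the monomer case the capacitance matrix is tridiagonal $1$-Toeplitz with symbol $f(z) = bz^{-1} + a + cz$, so $\mathcal{C}^\gamma - \omega^2 \Id$ is again tridiagonal Toeplitz, with the same off-diagonals and diagonal entry $a - \omega^2$. Hence the parameter of Theorem \ref{thm: inverse oftridiagonal toeplitz} coincides with $d = (a-\omega^2)/(2\sqrt{bc})$, and Lemma \ref{lemma: open limt monomer} shows that the assumption $\omega^2 \notin \sigma_{\mathrm{open}}$ is precisely $|d| > 1$, so $\arccosh{d}$ is well-defined on its positive branch.

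By Theorem \ref{thm: green decay equal defect decay}, a defect eigenmode $\vect{u}$ at defect site $j$ is, up to normalisation, the discrete Green's function $(\mathcal{C}^\gamma - \omega^2 \Id)^{-1}\delta_j$, so its entries are those of the $j$-th column of the inverse. For $i \leq j$, I would substitute into the first case of \eqref{eq: inverse of teplitz matrix} to obtain
\begin{equation*}
    \vect{u}_i = C \cdot (-1)^{i+j}\,\frac{b^{j-i}}{(\sqrt{bc})^{j-i+1}}\, U_{i-1}(d),
\end{equation*}
where the multiplicative constant $C$ absorbs the factors $U_{n-j}(d)/U_n(d)$ and the overall normalisation, all of which are independent of $i$.

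Next I would extract the $i$-dependence. The prefactor simplifies, using $b/\sqrt{bc} = \sqrt{b/c} = e^{r}$, to
\begin{equation*}
    \frac{b^{j-i}}{(\sqrt{bc})^{j-i+1}} = \frac{1}{\sqrt{bc}}\, e^{(j-i)r},
\end{equation*}
so the $i$-dependent part reduces to $e^{-ir}\,U_{i-1}(d)$. For $|d|>1$, the closed form of the Chebychev polynomials of the second kind recalled just before Theorem \ref{thm: inverse oftridiagonal toeplitz} together with the identity $d + \sqrt{d^2-1} = e^{\arccosh{d}}$ yields $U_{i-1}(d) = \mathcal{O}\bigl(e^{i\arccosh{d}}\bigr)$ as $i \to \infty$. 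Combining the two factors then gives the claimed bound $|\vect{u}_i| = \mathcal{O}\bigl(e^{-i(r-\arccosh{d})}\bigr)$.

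The main issue is not any single computation but a bookkeeping check: one needs to verify that $\arccosh{d}$ is taken on its positive branch consistently with the convention used throughout the paper, and for $d<-1$ one should either interpret the inverse $\cosh$ on the appropriate principal branch or argue directly from the exponential representation, with the alternating sign $(-1)^{i+j}$ in \eqref{eq: inverse of teplitz matrix} absorbing any spurious sign. A second, milder subtlety is that the finite capacitance matrix \eqref{capactitance matrix definition} has slightly perturbed corner entries, but these are compact perturbations acting at the boundary and away from the defect site they do not affect the asymptotic decay estimate of interest here.
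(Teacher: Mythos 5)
Your proposal is correct and follows essentially the same route as the paper: identify the defect mode with the $j$-th column of $(\mathcal{C}^\gamma-\omega^2\Id)^{-1}$, apply the explicit inversion formula of Theorem \ref{thm: inverse oftridiagonal toeplitz}, isolate the $i$-dependent factor $e^{-ir}U_{i-1}(d)$, and use the exponential growth $U_{i-1}(d)=\mathcal{O}(e^{i\arccosh{d}})$ for $|d|>1$ guaranteed by Lemma \ref{lemma: open limt monomer}. Your version is in fact slightly cleaner in the bookkeeping (the paper's displayed intermediate steps contain minor exponent typos that your computation avoids) and makes explicit the appeal to Theorem \ref{thm: green decay equal defect decay} and the $\arccosh$ branch convention, which the paper leaves implicit.
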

\begin{proof}
    From Theorem \ref{thm: inverse oftridiagonal toeplitz} it follows that for $i \leq j$, meaning for a mode leading up to the defect,
    \begin{align}
        |\vect{u}_i| &=  \frac{b^{i+j}}{(\sqrt{bc})^{j-i+1}}\frac{U_{i-1}(d)U_{n-j}(d)}{U_n(d)}\\
        &= \frac{U_{n-j}(d)b^j}{U_n(d)(\sqrt{bc})^{j+1}}\frac{b^{-i}U_{i-1}(d)}{(\sqrt{bc})^{i}}\\
        &\leq K e^{i\bigl(\arccosh{d}+\log(\sqrt{bc})-\log(b)\bigr)}.
    \end{align}
    The derived bound originates from the exponential decay of the Chebychev polynomials for $|d|>1$ which by Lemma \ref{lemma: open limt monomer} holds for $\omega^2 \not\in \sigma_{\mathrm{open}}$.
    A direct computation yields $-\log(\sqrt{bc})+\log(b) = \frac{1}{2}\log\left(\frac{b}{c}\right)$, which concludes the proof.
\end{proof}

\begin{corollary}\label{Cor: Green past to the defect}
    Let $d = (a - \omega^2)/2\sqrt{bc}$ and $\omega^2 \not\in \sigma_{\mathrm{open}}$. Past the defect site, that is, for $i > j$, a defect eigenmode $\vect{u}$ is exponentially localised in $i$, so
    \begin{equation}
        |\vect{u}_i| = \mathcal{O}\bigl( e^{-i( r + \arccosh{d}) } \bigr),
    \end{equation}
    with $r = \frac{1}{2}\log\left(\frac{b}{c}\right)$.
\end{corollary}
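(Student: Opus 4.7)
The plan is to mirror the proof of Corollary \ref{Cor: Green up to the defect}, now using the $i > j$ branch of the inversion formula in Theorem \ref{thm: inverse oftridiagonal toeplitz}. By Theorem \ref{thm: green decay equal defect decay} we may take the defect eigenmode to be the $j$-th column of $(\vect{A} - \omega^2\Id)^{-1}$, whose entries for $i > j$ are given explicitly by
\begin{equation*}
|\vect{u}_i| = \frac{c^{i-j}}{(\sqrt{bc})^{i-j+1}}\,\frac{U_{j-1}(d)\,U_{n-i}(d)}{U_n(d)}.
\end{equation*}
I would then absorb the $j$- and $n$-dependent prefactor into a constant $K$, reducing the estimate to
\begin{equation*}
|\vect{u}_i| \leq K \left(\sqrt{c/b}\right)^i U_{n-i}(d),
\end{equation*}
so that only the geometric factor $\left(\sqrt{c/b}\right)^i = e^{-ir}$ (using $r = \tfrac{1}{2}\log(b/c)$ from \eqref{eq: non reciprocty 1 toeplitz}) and the Chebychev factor $U_{n-i}(d)$ govern the decay in $i$.

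The key analytic input is the exponential growth of $U_m(d)$ for $|d|>1$. This regime is guaranteed by Lemma \ref{lemma: open limt monomer} since $\omega^2 \notin \sigma_{\mathrm{open}}$, giving $U_{n-i}(d)\leq C\,e^{(n-i+1)\arccosh d}$, whose $i$-dependent part contributes $e^{-i\arccosh d}$. Combining this with $e^{-ir}$ yields $|\vect{u}_i|=\mathcal{O}\bigl(e^{-i(r+\arccosh d)}\bigr)$, which is the claim.

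The only subtlety compared with Corollary \ref{Cor: Green up to the defect} is a sign bookkeeping: there the Chebychev factor $U_{i-1}(d)$ contributed $e^{+i\arccosh d}$, partially cancelling $e^{-ir}$ to produce the exponent $-(r-\arccosh d)$; here $U_{n-i}(d)$ contributes $e^{-i\arccosh d}$, which reinforces $e^{-ir}$ and yields $-(r+\arccosh d)$. I expect no substantive obstacle beyond verifying this sign, since the argument is otherwise a direct rearrangement of the explicit inverse formula in Theorem \ref{thm: inverse oftridiagonal toeplitz}.
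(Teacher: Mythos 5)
Your proposal is correct and follows essentially the same route as the paper: both start from the $i>j$ branch of the explicit inverse in Theorem \ref{thm: inverse oftridiagonal toeplitz}, isolate the $i$-dependent factor $c^{i}/(\sqrt{bc})^{i}=e^{-ir}$ together with $U_{n-i}(d)$, and use the exponential growth of the Chebychev polynomials for $|d|>1$ (guaranteed by Lemma \ref{lemma: open limt monomer}) to obtain the exponent $-i(r+\arccosh{d})$. Your sign bookkeeping relative to Corollary \ref{Cor: Green up to the defect} matches the paper's computation $-\log(c)+\log(\sqrt{bc})=\tfrac{1}{2}\log(b/c)=r$.
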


\begin{proof}
     From Theorem \ref{thm: inverse oftridiagonal toeplitz} it follows that for $i > j$, meaning for a mode past the defect,
    \begin{align}
        |\vect{u}_i| &=  \frac{c^{i-j}}{(\sqrt{bc})^{i-j+1}}\frac{U_{j-1}(d)U_{n-i}(d)}{U_n(d)} \\
        &= \frac{c^{-j}U_{j-1}(d)}{(\sqrt{bc})^{-j+1}U_n(d)}\frac{c^iU_{n-i}(d)}{(\sqrt{bc})^i}\\
        &\leq K e^{-i\bigl( \arccosh{(d)} - \log(c) + \log(\sqrt{bc})\bigr)}.
    \end{align}
    The derived bound originates from the exponential decay of the Chebychev polynomials for $|d|>1$ which by Lemma \ref{lemma: open limt monomer} holds for $\omega^2 \not\in \sigma_{\mathrm{open}}$.
     A direct computation yields $-\log(c) + \log(\sqrt{bc}) =  \frac{1}{2}\log\left(\frac{b}{c}\right)$, which concludes the proof.
\end{proof}

Note that the decay estimates provided in the two previous corollaries are sharp and agree with the estimates presented in Corollaries \ref{cor: decay rate outside spectrum}. Moreover, Corollaries \ref{Cor: Green up to the defect} and \ref{Cor: Green past to the defect} are also valid for $\omega^2\in \sigma_{\mathrm{wind}}^\mathsf{c}$, where corresponding eigenmodes are localised in the bulk rather than at the edge of the resonator chain.
For $\omega^2 \in \sigma_\text{open}$ it holds that $\lvert d \rvert \leq 1$ and thus $\Re\bigl(\arccosh{d}\bigr) = 0$, so the bounds achieved in Corollaries \ref{Cor: Green up to the defect} and \ref{Cor: Green past to the defect} reduce to 
\begin{equation}
    \lvert \vect{u}_i \rvert =\mathcal{O}\bigl(e^{-ir}\bigr), \quad 1 \leq i \leq n,
\end{equation}
which is consistent with Corollary \ref{cor: decay rate spectrum}.

Similarly as in \cite[Section 2.2.5.]{debruijn2024complexbandstructuresubwavelength} the complex band structure can be used to predict the localisation length of eigenmodes in defected resonator chains. In Figure \ref{Greens function against complex band structure} we overlay the decay of the discrete Green's function with the complex band structure and find very close agreement.

\begin{figure}[H]
    \centering
    \subfloat[][$\gamma = 1$]
    {\includegraphics[width=0.45\linewidth]{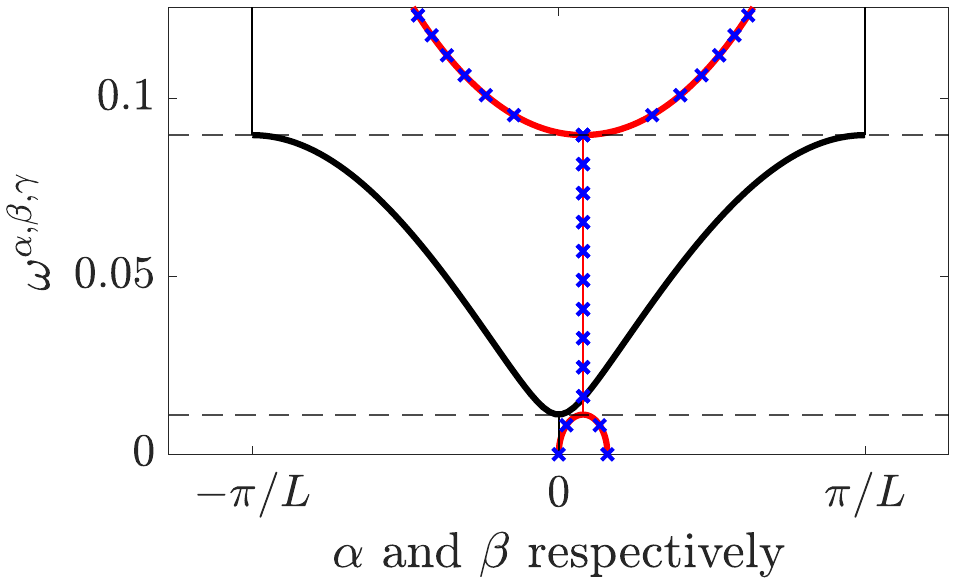}}\quad
    \subfloat[][$\gamma = 3$]
    {\includegraphics[width=0.45\linewidth]{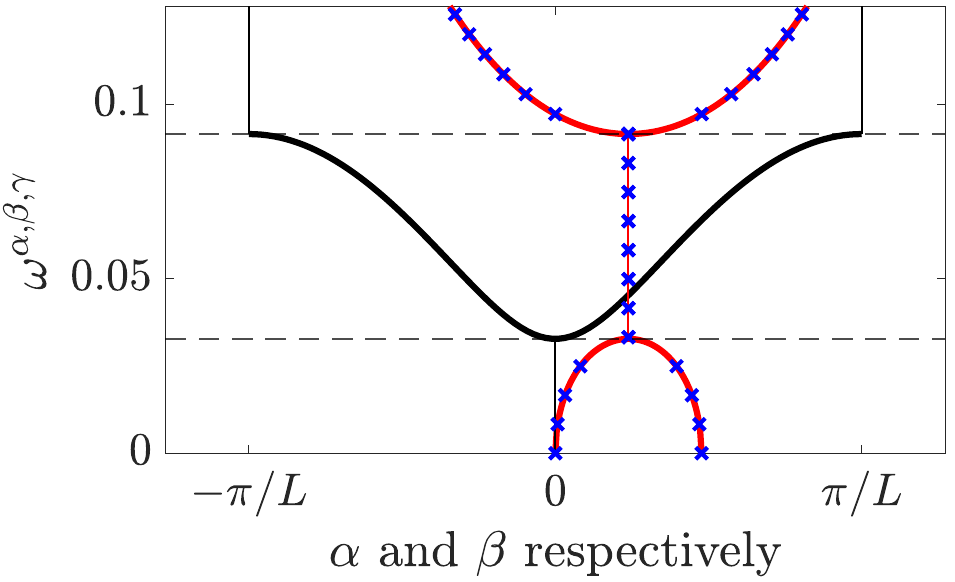}}
    \caption{Gap and band functions for the complex quasiperiodic gauge capacitance matrix. The blue crosses denote the decay bounds of the discrete Green's function as in Corollaries \ref{Cor: Green up to the defect} and \ref{Cor: Green past to the defect}. For $\omega^2\in \sigma_\text{open}$ the Green's function is computed numerically via the pseudoinverse.
    Computation performed for $s_i = \ell_i = 0.5$. 
    }
    \label{Greens function against complex band structure}
\end{figure}

\subsubsection{Polymer chains}
The complex band structure may be used for general $k$-periodic Toeplitz matrices and provides an effective and relatively simple method for modelling localisation phenomena in finite periodic polymer chains of resonators. The benefit of the complex band structure is the following: Consider a $k$-periodic resonator chain comprising $N$ unit cells. Calculating the decay of the Green's function would require inverting a matrix of dimension $Nk$. However, the complex band structure is entirely defined by the $k$ eigenvalues of the symbol, and according to Section \ref{Sec: Pseudospectrum}, we know that this will be an exponentially good decay estimate. This will be particularly important when working on relatively large systems where $N$ is large.

\begin{figure}[ht]
    \centering
    \subfloat[][$\eta = 1.5$]%
    {\includegraphics[width=0.45\linewidth]{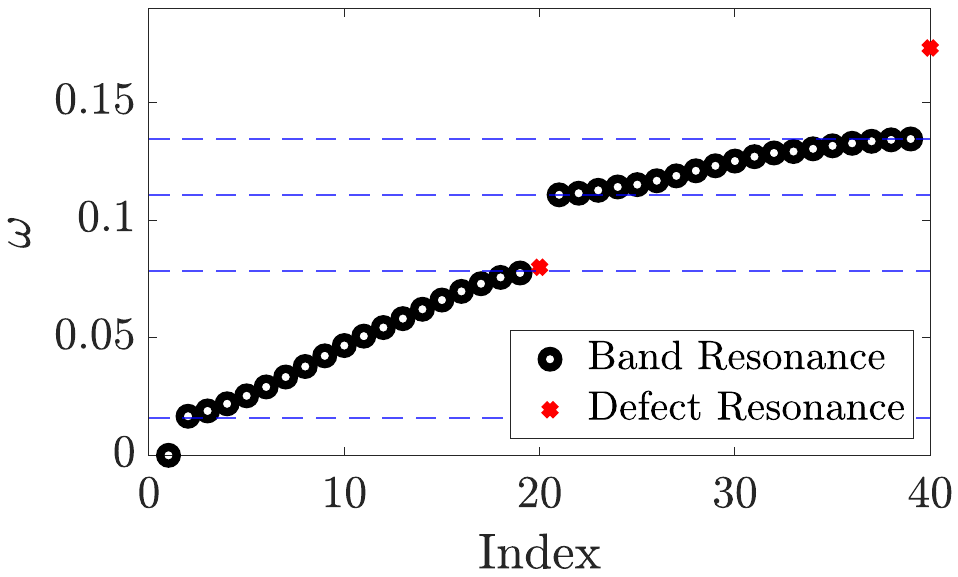}}\quad
    \subfloat[][$\eta = -0.95$]%
    {\includegraphics[width=0.45\linewidth]{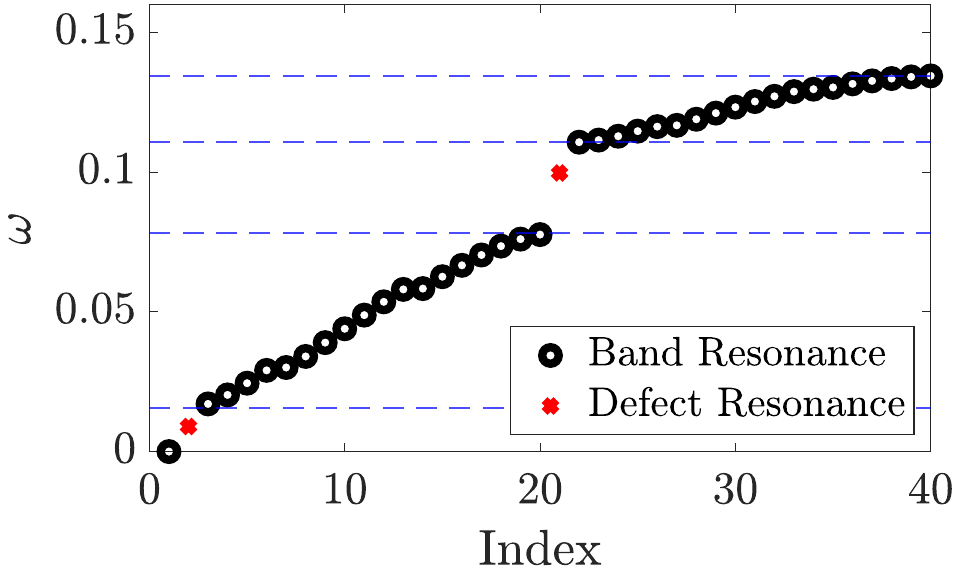}}
    \caption{The eigenfrequencies of a finite dimer resonator chain. The dashed lines illustrate the spectrum of an infinite chain. The two defect modes that lie in the spectral gap are highlighted in red.  Computation performed for $s_1 = 1, s_2 = 2, \ell_1 = \ell_2 = 0.25, \gamma = 3$ and $N = 40$. }
    \label{fig: dimer defect}
\end{figure}

\begin{figure}[ht]
    \centering
    \subfloat[][$\gamma = 1$]%
    {\includegraphics[width=0.45\linewidth]{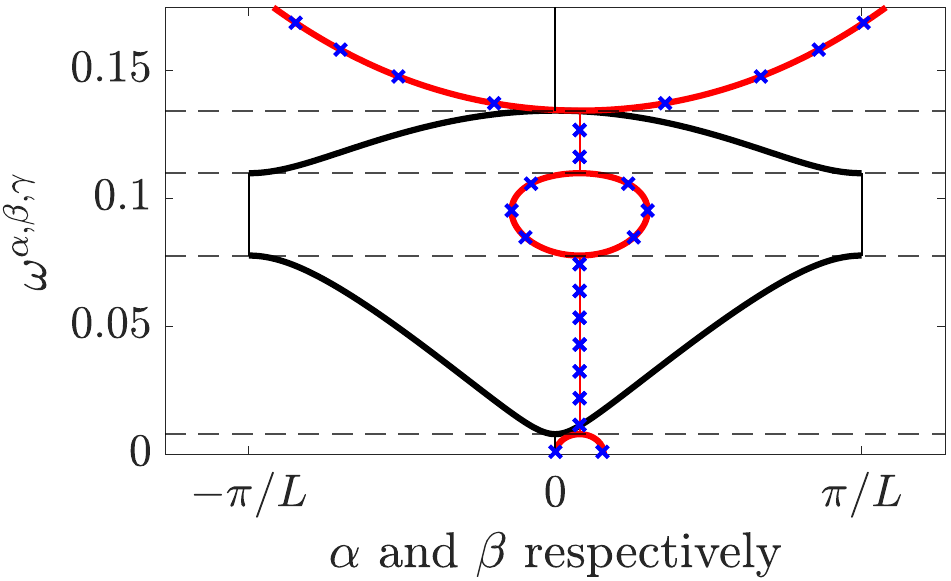}}\quad
    \subfloat[][$\gamma = 3$]%
    {\includegraphics[width=0.45\linewidth]{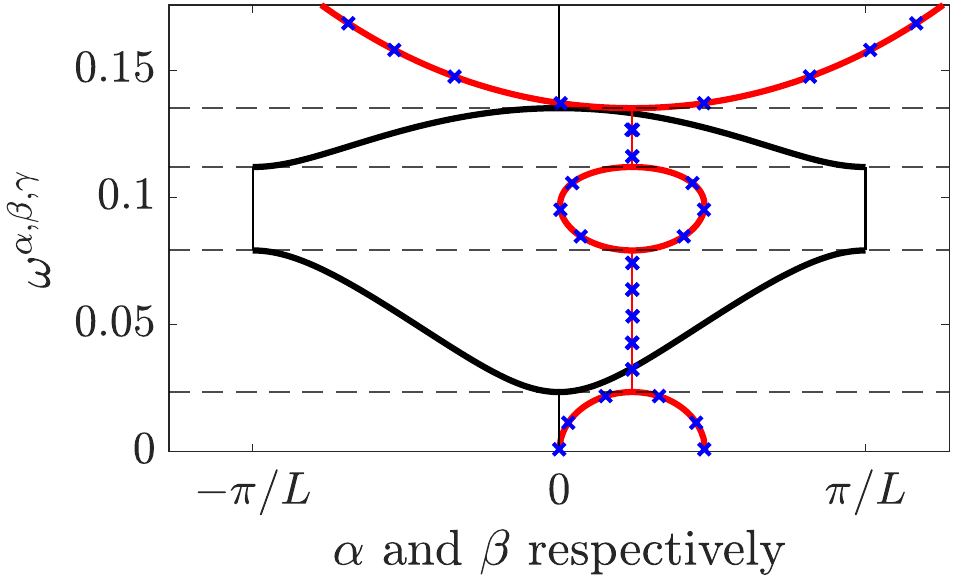}}
    \caption{Gap and band functions for the complex quasiperiodic gauge capacitance matrix. The blue crosses denote the decay bounds of the discrete Green's function computed numerically. For $\omega^2\in \sigma_\text{open}$ the Green's function is computed numerically via the pseudoinverse. Computation performed for $s_1 = 1, s_2 = 2, \ell_1 = \ell_2 = 0.25,$. }
    \label{fig: Greens function against complex band structure Dimer}
\end{figure}

As shown in Figure \ref{fig: dimer defect}, we may obtain modes with frequencies outside the Floquet spectrum by introducing defects to the dimer system. Figure \ref{fig: Greens function against complex band structure Dimer} shows that the decay length of the discrete Green's function, which corresponds to simulating finite structures with defects at varied frequencies, aligns remarkably well with the complex band structure. This alignment confirms the reliability of the complex band structure in predicting localisation phenomena within finite non-Hermitian polymer chains.

\section{Non-Hermitian Tight-Binding Hamiltonian}\label{sec: Non-Hermitian Hamiltonian}
In this section, we further illustrate the wide applicability of the complex band structure for tridiagonal $k$-Toeplitz operators. We proceed to investigate defect modes in the non-Hermitian tight binding Hamiltonian with non-reciprocal coupling introduced by Hatano and Nelson \cite{PhysRevB.58.8384, PhysRevLett.77.570, PhysRevB.111.035109}. For an unperturbed structure, the non-Hermitian Hamiltonian is given by 
\begin{equation}\label{eq: Hamiltonian}
    H\psi_n = V_n \psi_n - e^{-\gamma}\psi_{n-1} - e^{\gamma}\psi_{n+1},
\end{equation}
where $V_n: \Z \to \R$ is some potential function and the non-reciprocity is given by $\gamma > 0$.
For a constant potential $V_n = v$, the Hamiltonian \eqref{eq: Hamiltonian} is a truncated Toeplitz matrix with symbol,
\begin{equation}\label{eq: symbol non-Hermitian Hamiltonian}
    f(z) = v - e^{-\gamma} z - e^{\gamma}z^{-1}.
\end{equation}
For eigenvalues within the open spectral band, as specified in \eqref{eq. spectral theorem limit}, Theorem \ref{Thm: alpha and beta fixed} indicates the non-reciprocity rate given by \eqref{eq: non reciproctity rate} is fixed to,
\begin{equation}\label{eq: non-reciprocity rate non-Hermitian Hemiltonian}
    r = \frac{1}{2}\log\Bigl(\prod_{i=1}^k \frac{b_i}{c_i}\Bigr) = \frac{1}{2}\log\Bigl(\frac{e^{\gamma}}{e^{-\gamma}}\Bigr) =  \gamma.
\end{equation}
By Corollary \ref{cor: decay rate spectrum} the eigenvectors of the Toeplitz operator with symbol \eqref{eq: symbol non-Hermitian Hamiltonian} are exponentially localised with rate equal to the rate of non-reciprocity. 
As shown in Section \ref{Sec: Pseudospectrum}, truncated eigenvectors of the Toeplitz operator may be used to construct exponentially good pseudoeigenvectors for the finite Toeplitz matrix.
As the symbol function \eqref{eq: symbol non-Hermitian Hamiltonian} is scalar valued, a direct computation yields
\begin{equation}\label{eq:simplified symbol}
    f\bigl(e^{\i(\alpha + \i \beta)}\bigr) = v-e^{-\gamma}e^{\i(\alpha + \i \beta)} - e^{\gamma}e^{-\i(\alpha + \i\beta)} = v-2\cos(\alpha)\cosh(\gamma + \beta).
\end{equation}
The open limit of the spectrum is given by Lemma \ref{lemma: open limt monomer}, that is 
\begin{equation}
    \sigma_\text{open} = [v - 2, v + 2 ].
\end{equation}

For a defect located at index $j$ in the on-site potential, the potential is described by 
\begin{equation}
    V_n = v + d\delta_{n,j},
\end{equation}
with $d\in \R$ defining the defect's magnitude. Variations in $d$ result in distinct localisation characteristics. As detailed in \cite{PhysRevB.111.035109}, when the defect magnitude is
\begin{equation}\label{eq: defect edge of winding region}
    d = \pm2 \sinh(\gamma),
\end{equation}
the Toeplitz matrix features an eigenvalue $\lambda \in \sigma_\text{det}$, positioned on the boundary of the winding region.  According to Corollary \ref{cor: decay rate outside spectrum}, the exponential decay rates are $r - \arccosh{\frac{v-\lambda}{2}}$ and $r + \arccosh{\frac{v-\lambda}{2}}$ on the left and right sides of the defect, respectively.

\begin{figure}[tb]
    \centering
   \includegraphics[width=0.85\linewidth]{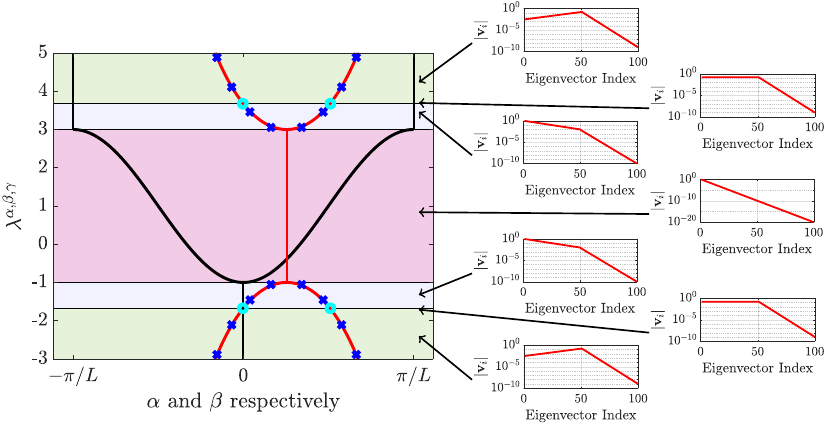}
    \caption{The band and gap functions for the symbol function \eqref{eq:simplified symbol}. The dark blue crosses denote defects introduced into the structure represent the eigenvalues and decay of the eigenvectors of a defected Hamiltonian, simulated for a variety of defect sizes. The light blue circle are the decay lengths of a defect of size \eqref{eq: defect edge of winding region} and are situated at the edge of the winding region. The qualitative behaviour of the eigenvectors is the same as in Figure \ref{fig: Spectral plot zones}.}
    \label{fig: Band functions non-Hermitian Hamiltonian}
\end{figure}

This transition from skin to bulk localisation,  illustrated in Figure \ref{fig: Band functions non-Hermitian Hamiltonian}, can be explained by the complex band structure with the following reasoning. 
For skin localisation to occur, it must hold that the eigenmodes are exponentially decaying leading up to the defect and past the defect, that is 
\begin{equation}
    \begin{cases}
        r + \arccosh{\frac{v-w}{2}} > 0,\\
        r - \arccosh{\frac{v-w}{2}} > 0,
    \end{cases}
\end{equation}
which is satisfied for frequencies $w \in \bigl(v-2\cosh{(r)}, v + 2\cosh{(r)}\bigr)$, which is consistent with the values presented in \cite{PhysRevB.111.035109} and coincides with the winding region given in \eqref{eq: monomer winding region}. For frequencies outside this interval, the exponential decay rate is $r - \arccosh{\frac{v-w}{2}} \leq 0$, in other words, the eigenmode is exponentially growing leading up to the defect, and the eigenmode is localised in the bulk. 

\section{Concluding remarks}\label{Sec: concluding remarks}
Our research illustrates that the complex band structure presents a universal method for analysing perturbed tridiagonal $k$-Toeplitz operators, resulting in a detailed characterisation of the localisation properties of its eigenvectors. We have demonstrated that the complex band structure serves as an effective basis for generating pseudoeigenvectors with exponential accuracy for finite systems. This approach facilitates the exploration of defected non-Hermitian materials, leading to the identification of novel phenomena, including non-reciprocal interface modes and skin localisation. The methodology for tridiagonal $k$-Toeplitz operators is general, and we demonstrated multiple applications in subwavelength physics and quantum mechanics. In future work, we seek to extend the complex band structure to a wider class of pseudo-Hermitian operators such as $m$-banded Toeplitz matrices for $m > 3$, allowing one to treat systems with long-range interactions.

\section{Data availability} \label{Sec: Data availability}
The \texttt{Matlab} code for the numerical experiments developed in this work is openly available in the following repository:
\url{https://github.com/yannick2305/Non-Hermitian-Localisation}.

\appendix
\section{Integral Evaluation} \label{appendic: integral evaluation}
We seek to evaluate the integral,
    \begin{align}
         \frac{\eta}{|Y^*|}\int_{Y^\star}\frac{\lambda^{\alpha, r}_1}{\omega_0^2 - \lambda^{\alpha, r}_1 }\d \alpha
         &= \frac{\eta}{2\pi} \int_{-\pi}^\pi   \frac{a -\sqrt{bc}\cos(\alpha) }{w_0^2 -a + \sqrt{bc}\cos(\alpha) }\d \alpha,
    \end{align}
    which is an integral of the form
    \begin{equation}\label{eq: integral cos fraction}
        \int_{-\pi}^\pi \frac{A + B\cos(x)}{C - B\cos(x)}\d x = 2\int_{0}^\pi \frac{A + B\cos(x)}{C - B\cos(x)}\d x = 2\frac{A}{B}\int_0^\pi\frac{\d x}{\frac{C}{B}-\cos(x)} + 2\frac{B}{C}\int_0^\pi\frac{\cos(x)\d x}{1-\frac{B}{C}\cos(x)}.
    \end{equation}
    The integrals can now be evaluated as follows. By \cite[3.613 (1)]{gradshteyn2014tables}
    \begin{equation}
        \int_0^\pi \frac{\cos(x)}{1-B/C\cos(x)}\d x = \frac{\pi}{\sqrt{1-(B/C)^2}}\left(\frac{\sqrt{1-(B/C)^2}-1}{B/C}\right).
    \end{equation}
    Also, by \cite[3.661 (4)]{gradshteyn2014tables},
    \begin{equation}
        \int_0^\pi \frac{\d x}{C/B-\cos(x)} = \frac{\pi}{\sqrt{(C/B)^2 -1}}.
    \end{equation}
    As a consequence, the integral \eqref{eq: integral cos fraction} evaluates to
    \begin{equation}
        \frac{2\pi}{\sqrt{C^2-B^2}}\left(A -  \sqrt{C^2 - B^2}-C\right) = 2\pi \left(\frac{A-C}{\sqrt{C^2-B^2}}-1 \right).
    \end{equation}
    Substituting the values $A = a, B = -2\sqrt{bc}$ and $ C = w^2_0-a$ yields the desired result.

\bibliographystyle{abbrv}
\bibliography{bibliography_master}{}
\end{document}